\newcommand{\salt}{\vspace*{2.5mm}}     
\newcommand{\ap}{@}
\def\boxit#1{\vbox{\hrule\hbox{\vrule\kern2pt
 \vbox{\kern4pt#1\kern2pt}\kern2pt\vrule}\hrule}}
\def\qed{\hfill
 $\hskip0.3cm
 \boxit{\hsize 2pt \vsize 10pt}$\bigskip\noindent}
\def\C{{\mathchoice {\setbox0=\hbox{$\displaystyle\rm C$}\hbox{\hbox
to0pt{\kern0.4\wd0\vrule height0.9\ht0\hss}\box0}}
{\setbox0=\hbox{$\textstyle\rm C$}\hbox{\hbox
to0pt{\kern0.4\wd0\vrule height0.9\ht0\hss}\box0}}
{\setbox0=\hbox{$\scriptstyle\rm C$}\hbox{\hbox
to0pt{\kern0.4\wd0\vrule height0.9\ht0\hss}\box0}}
{\setbox0=\hbox{$\scriptscriptstyle\rm C$}\hbox{\hbox
to0pt{\kern0.4\wd0\vrule height0.9\ht0\hss}\box0}}}}
\def\Q{{\mathchoice {\setbox0=\hbox{$\displaystyle\rm
Q$}\hbox{\raise
0.15\ht0\hbox to0pt{\kern0.4\wd0\vrule height0.8\ht0\hss}\box0}}
{\setbox0=\hbox{$\textstyle\rm Q$}\hbox{\raise
0.15\ht0\hbox to0pt{\kern0.4\wd0\vrule height0.8\ht0\hss}\box0}}
{\setbox0=\hbox{$\scriptstyle\rm Q$}\hbox{\raise
0.15\ht0\hbox to0pt{\kern0.4\wd0\vrule height0.7\ht0\hss}\box0}}
{\setbox0=\hbox{$\scriptscriptstyle\rm Q$}\hbox{\raise
0.15\ht0\hbox to0pt{\kern0.4\wd0\vrule height0.7\ht0\hss}\box0}}}}
\def\T{{\mathchoice {\setbox0=\hbox{$\displaystyle\rm
T$}\hbox{\hbox to0pt{\kern0.3\wd0\vrule height0.9\ht0\hss}\box0}}
{\setbox0=\hbox{$\textstyle\rm T$}\hbox{\hbox
to0pt{\kern0.3\wd0\vrule height0.9\ht0\hss}\box0}}
{\setbox0=\hbox{$\scriptstyle\rm T$}\hbox{\hbox
to0pt{\kern0.3\wd0\vrule height0.9\ht0\hss}\box0}}
{\setbox0=\hbox{$\scriptscriptstyle\rm T$}\hbox{\hbox
to0pt{\kern0.3\wd0\vrule height0.9\ht0\hss}\box0}}}}
\def\Z{{\mathchoice {\hbox{$\sf\textstyle Z\kern-0.4em Z$}}
{\hbox{$\sf\textstyle Z\kern-0.4em Z$}}
{\hbox{$\sf\scriptstyle Z\kern-0.3em Z$}}
{\hbox{$\sf\scriptscriptstyle Z\kern-0.2em Z$}}}}
\newcommand{\eqnoone}  
   {}
\newcommand{\eqnotwo}  
   {}
\newcounter{alf}
\newcommand{\adresa}[1]{\par\vspace*{-11pt}
                        \begin{flushright}
                        {\small
                        #1}
                        \end{flushright}
                        }
\renewcommand*\l@section{\@dottedtocline{1}{1.5em}{2.3em}}
\definecolor{dkgreen}{RGB}{0,100,0}
\newcommand{\N}{\mathbb{N}}
\renewcommand{\Z}{\mathbb{Z}}
\renewcommand{\Q}{\mathbb{Q}}
\newcommand{\R}{\mathbb{R}}
\renewcommand{\C}{\mathbb{C}}
\renewcommand{\P}{\mathbb{P}}
\newcommand{\CP}{\mathbb{CP}}
\renewcommand{\k}{\Bbbk}
\newcommand{\K}{\mathbb{K}}
\newcommand{\RR}{\mathcal{R}}
\newcommand{\A}{{\mathcal{A}}}
\newcommand{\BB}{{\mathcal{B}}}
\newcommand{\CC}{{\mathcal{C}}}
\newcommand{\XX}{\mathcal{X}}
\newcommand{\cE}{\mathcal{E}}
\newcommand{\cI}{\mathcal{I}}
\newcommand{\NN}{\mathcal{N}}
\newcommand{\cS}{\mathcal{S}}
\newcommand{\bR}{\boldsymbol{\RR}}
\newcommand{\h}{{\mathfrak{h}}}
\newcommand{\g}{{\mathfrak{g}}}
\newcommand{\B}{{\mathfrak{B}}}
\newcommand{\M}{\mathsf{M}}
\DeclareMathOperator{\rank}{rank}
\DeclareMathOperator{\rk}{rk}
\DeclareMathOperator{\corank}{corank}
\DeclareMathOperator{\gr}{gr}
\DeclareMathOperator{\im}{im}
\DeclareMathOperator{\ideal}{ideal}
\DeclareMathOperator{\id}{id}
\DeclareMathOperator{\ab}{{ab}}
\DeclareMathOperator{\Sym}{Sym}
\DeclareMathOperator{\ch}{char}
\DeclareMathOperator{\Hom}{{Hom}}
\DeclareMathOperator{\PGL}{{PGL}}
\DeclareMathOperator{\Ext}{{Ext}}
\DeclareMathOperator{\Hilb}{{Hilb}}
\DeclareMathOperator{\spn}{span}
\DeclareMathOperator{\ann}{{ann}}
\DeclareMathOperator{\Lie}{Lie}
\DeclareMathOperator{\Spec}{{Spec}}
\DeclareMathOperator{\loc}{loc}
\DeclareMathOperator{\irr}{irr}
\DeclareMathOperator{\supp}{{supp}}
\DeclareMathOperator{\ii}{i}
\newcommand{\OS}{A}
\newcommand{\qA}{\mathrm{q}A}
\newcommand{\cga}{\textsc{cga}}
\newcommand{\surj}{\twoheadrightarrow}
\newcommand{\inj}{\hookrightarrow}
\newcommand{\isom}{\xrightarrow{
   \,\smash{\raisebox{-0.4ex}{\ensuremath{\scriptstyle\cong}}}\,}}
\newcommand{\DS}{\displaystyle}
\newcommand{\abs}[1]{\left| #1 \right|}
\newcommand{\bwedge}{\mbox{\normalsize $\bigwedge$}}
\newcommand{\pcoor}[1]{%
  \begingroup\lccode`~=`: \lowercase{\endgroup
  \edef~}{\mathbin{\mathchar\the\mathcode`:}\nobreak}%
  [
  \begingroup
  \mathcode`:=\string"8000
  #1%
  \endgroup 
  ]
}
\newcommand{\arxiv}[1]
{\texttt{\href{http://arxiv.org/abs/#1}{arXiv:#1}}}
\numberwithin{equation}{section}
\newtheorem{theorem}{Theorem}[section]
\newtheorem{corollary}[theorem]{Corollary}
\newtheorem{lemma}[theorem]{Lemma}
\newtheorem{proposition}[theorem]{Proposition}
\newtheorem{example}[theorem]{Example}
\newtheorem{remark}[theorem]{Remark}
\newtheorem{conjecture}[theorem]{Conjecture}
\newtheorem{question}[theorem]{Question}
\newtheorem{problem}[theorem]{Problem}
\begin{document}

{\footnotesize%
\hfill
\begin{tabular}{c}
Bull. Math. Soc. Sci. Math. Roumanie\\
Tome 68 (116) No. 3, 2025, 349--382
\end{tabular}}

\vskip 1.2 true cm
\setcounter{page}{1}

\begin{center} {\bf Resonance varieties and Lie algebras of matroids and hyperplane arrangements} \\
          {by}\\
{\sc Alexandru Suciu$^{(1)}$}
\end{center}

\pagestyle{myheadings} 
\markboth{ Resonance and Lie algebras of matroids and arrangements }{A.~Suciu}

\centerline{{\em To  Bazil Br\^{i}nz\u{a}nescu on the occasion of his 80th birthday}}

\begin{abstract}
The resonance varieties, the holonomy Lie algebra, and the holonomy Chen Lie algebra 
associated with the  Orlik--Solomon algebra of a matroid provide an algebraic lens 
through which to examine the rich combinatorial structure of matroids and their 
geometric realizations as complex hyperplane arrangements. In this survey paper, 
we emphasize the commonalities between these objects but also the possible 
ways in which realizable and non-realizable matroids differ, leading to some 
open questions.
\end{abstract}

\begin{quotation}
\noindent{\bf Key Words}: {Matroid, hyperplane arrangement,
Orlik--Solomon algebra, resonance variety, decomposable matroid, 
holonomy Lie algebra, Chen ranks, Koszul module.}

\noindent{\bf 2020 Mathematics Subject Classification}:  Primary
52C35; 
Secondary 
14N20, 
16W70, 
17B70,  
20F14,  
20F40,  
52B40, 
57M07.  
\end{quotation}

\thispagestyle{empty}

\setcounter{tocdepth}{1} 
\tableofcontents

\section{Introduction}
\label{sect:intro}

Matroid theory provides a powerful framework for studying combinatorial structures 
arising from discrete sets, abstracting linear independence to bridge algebraic, 
geometric, and topological methods. A simple matroid $\M$ on a ground set 
$\cE$ of $n$ elements encodes dependencies among subsets, with its lattice 
of flats capturing combinatorial structure. When realized as a hyperplane 
arrangement $\A$ in $\C^\ell$, the matroid's lattice $L(\M)$ corresponds 
to the arrangement's intersection lattice $L(\A)$, linking combinatorics to 
the topology of the complement $M(\A) = \C^\ell \setminus \bigcup_{H \in \A} H$ 
and the group theory of the fundamental group $G = \pi_1(M(\A))$. This survey 
explores algebraic invariants---such as the Orlik--Solomon algebra, holonomy 
Lie algebra, and holonomy Chen Lie algebra---alongside algebro-geometric 
invariants like resonance varieties and Koszul modules, and combinatorial 
structures like multinets, examining their interplay and dependence on the 
matroid versus its realization.

The \emph{Orlik--Solomon algebra} $\OS(\M)$, defined as the quotient of the 
exterior algebra $\bwedge V$ on a free abelian group $V$ with basis $\{e_1, \dots, e_n\}$ 
by circuit relations, is a graded, graded-commutative algebra whose Betti numbers 
$b_q(\M) = \rank \OS^q(\M)$ reflect the matroid's dependencies. For realizable matroids, 
the OS-algebra $\OS(\M)$ is isomorphic to the cohomology ring $H^*(M(\A); \Z)$, 
providing a topological interpretation of these invariants \cite{OS}. 

The \emph{holonomy Lie algebra} $\h(\M)$, 
a quotient of the free Lie algebra $\Lie(V^{\vee})$ on the dual $\Z$-module $V^{\vee}$ 
by a quadratic ideal, encodes relations dual to the multiplication in $\OS(\M)$ \cite{Ko83, PS-imrn04}. 
Its graded ranks $\phi_r(\M) = \rank \gr_r(\h(\M))$ correspond to the lower central series 
ranks of $G$ in the realizable case, via an epimorphism $\h(\M) \to \gr(G)$ that is an 
isomorphism over $\Q$  \cite{PS-imrn04} but may introduce torsion in positive characteristic, as seen 
in the non-Fano arrangement (Example \ref{ex:non-fano}).

The \emph{holonomy Chen Lie algebra} $\h(\M)/\h(\M)''$, a streamlined quotient capturing 
the metabelian structure of $\h(\M)$, has graded ranks $\theta_r(\M) = \rank \gr_r(\h(\M)/\h(\M)'')$ 
that coincide with the Chen ranks of $G$ in the realizable case \cite{PS-imrn04}. 
These ranks, conjecturally expressed for large $r$ in terms of $k$-multinets on 
sub-matroids (Conjecture~\ref{conj:chen-ranks-mat}), provide a powerful algebraic 
framework for probing the combinatorial topology of matroids. 

The \emph{resonance varieties} $\RR^q_s(\M, \k)$, defined as the jump loci of the cohomology 
of the Aomoto complexes $(\OS(\M) \otimes \k, \delta_a)$ \cite{Fa97}, encode dependencies 
among circuits and flats. In degree $1$, the variety $\RR^1_s(\M, \C)$ decomposes 
into linear subspaces corresponding to $k$-multinets on sub-matroids, with essential 
components arising from orbifold maps $M(\A) \to \CP^1 \setminus \{k \text{ points}\}$ 
in the realizable case \cite{FY07, MB09}. 

The \emph{Koszul modules} $W_q(\M, \k)$ 
are finitely generated modules over the polynomial ring $S_{\k}=\Sym(V^{\vee} \otimes \k)$, 
defined as the homology modules of a free $S_{\k}$-chain complex associated to 
$\OS(\M) \otimes \k$ \cite{PS-mrl}. The supports of these modules are closely connected 
to the resonance varieties; in fact, in degree $1$, we have that $\supp(W_1(\M,\k))=
\RR^1_1(\M,\k)$ (away from $0$). Moreover, $W_1(\M,\k)$ is equal 
to the infinitesimal Alexander invariant $\B(\h(\M)) \otimes \k = \h'/\h''$ \cite{PS-imrn04}, 
with the Hilbert series of $W_1(\M, \k)$ equaling the generating series of $\theta_r(\M)$ 
(shifted by $2$) in characteristic $0$ (Proposition~\ref{prop:inf Massey}).

This interplay of invariants raises open questions, such as the linearity of higher-degree 
resonance varieties $\RR^q_s(\M, \C)$ for $q > 1$, which holds for arrangements but 
remains unresolved for non-realizable matroids (Question~\ref{quest:res-mat-lin}), 
potentially distinguishing their algebraic structure. Similarly, the presence of 
torsion in $\h(\M)$ or the structure of Koszul modules may differentiate realizable from 
non-realizable matroids (Problem~\ref{prob:high-koszul}). By reformulating the Chen ranks 
conjecture in terms of multinets, this survey aims to unify combinatorial and topological 
perspectives, addressing these questions and their implications for matroid theory.

The paper is organized as follows: Section~\ref{sect:matroids} introduces matroids 
and multinets. Section~\ref{sect:OS-holo} defines the Orlik--Solomon and holonomy 
Lie algebras. Section~\ref{sect:res} examines resonance varieties and their connections 
to Koszul modules and Chen ranks. Section~\ref{sect:hyp-arr} links these invariants to 
the topology of hyperplane arrangements, addressing known results and open questions.

\section{Matroids and multinets}
\label{sect:matroids}

We start with a brief introduction to matroids, which generalize linear independence 
in vector spaces, and multinets, which encode symmetric partitions of matroid elements. 
These combinatorial structures are fundamental to our study, as their properties, particularly 
the structure of the matroid's lattice of flats and the multinet's base locus, will be 
used to define the algebraic invariants of later sections. We'll also explore the connections 
between realizable multinets and Latin squares, which highlight a surprising link between 
geometry, combinatorics, and algebra.

\subsection{Matroids}
\label{subsec:mat}

First introduced by Whitney, the notion of a matroid can be axiomatized in many ways, 
providing a unifying framework for concepts across linear algebra, graph theory, 
discrete geometry, and the theory of hyperplane arrangements. As general background on 
matroid theory, we refer to the books by Oxley \cite{Oxley} and White \cite{White} 
and the survey by Wilson \cite{Wilson}. We mention here only the concepts that 
will be needed for our purposes.

A {\em matroid}\/ $\M$ is a pair $(\cE,\cI)$, where 
$\cE$ is a finite set called the {\em ground set}\/ 
(often chosen to be $[n]\coloneqq \{1,\dots,n\}$) and $\cI$
is a collection of subsets of $\cE$ that contains the empty set, 
is closed under taking subsets, and satisfies the following exchange axiom: 
If $S, T\in \cI$ and $\abs{S} > \abs{T}$, then there exists 
$u \in S \setminus T$ such that $T \cup \{u\} \in \cI$. 
The elements of $\cE$ are sometimes called points, while 
the elements of $\cI$ are called {\em independent sets}. 
A maximal independent set is called a {\em basis}, while a 
minimal dependent set is called a {\em circuit}. Clearly, a set 
is independent if and only if it contains no circuit as a subset. 
A matroid $\M$ is said to be {\em connected}\/ if for every pair 
of distinct elements $u,v\in \cE$, there is a circuit containing both of them.

The {\em rank}\/ of a subset $S\subset \cE$, denoted $\rk(S)$, is the size of 
the largest independent subset of $S$. In particular, $S$ is independent 
if and only $\rk(S)=\abs{S}$, and so the matroid can also be specified by 
its rank function. The rank of $\M$ is simply the rank of its ground set. 
A subset  $S\subset \cE$ is {\em closed}\/ if it is 
maximal for its rank; that is, $\rk(S\cup \{e\})>\rk(S)$ for all $e\in \cE\setminus S$. 
The closure $\overline{S}$ of a subset $S\subset \cE$ is the intersection of 
all closed sets containing~$S$. Closed sets are also called {\em flats}. 
We will consider only {\em simple}\/ matroids, defined by the condition 
that all subsets of size at most two are independent.

The set of flats of $\M$, ordered by inclusion, forms 
a lattice, $L(\M)$, whose atoms are the singletons in $\cE$. 
We will denote by $L_k(\M)$ the set of rank-$k$ flats, 
and by $L_{\le k}(\M)$  the sub-poset of flats of rank 
at most $k$. The join of two flats $X$ and $Y$ is given by 
$X\vee Y=\overline{X\cup Y}$, while the meet is given by 
$X\wedge Y=X\cap Y$, and the rank function satisfies the 
semimodular inequality: 
\begin{equation}
\label{eq:semimod}
\rk(X)+\rk(Y)\ge \rk(X\vee Y)+\rk(X\wedge Y)
\end{equation}
for all $X,Y\in L(\M)$. It follows that $L(\M)$ is a {\em geometric 
lattice}, i.e., a graded, semimodular lattice where every element is a 
join of atoms.  A flat $X$ is {\em irreducible}\/ if it cannot be expressed 
as the meet of other flats; we denote by $L^{\irr}(\M)$ the set 
of irreducible flats in $L(\M)$. A flat $X$ is {\em modular}\/ if 
the semimodular inequality \eqref{eq:semimod} holds as equality 
for any other flat $Y$. A matroid $\M$ is 
{\em supersolvable}\/ if $L(\M)$ contains a maximal chain 
$\emptyset \subsetneq X_1  \subsetneq  \cdots  \subsetneq 
X_{\rk(\M)-1} \subsetneq  X_{\rk(\M)}=\cE$ of modular elements.

The {\em M\"{o}bius function}\/ of the lattice of flats, 
$\mu\colon L(\M) \to \Z$, sends a flat $X$ to the integer 
$\mu(X)=\mu(\emptyset, X)$, defined inductively by $\mu(\emptyset)=1$ 
and $\mu(X)=-\sum_{Y\subsetneq X} \mu(Y)$.  
For instance, if $X\in L_2(\M)$, then $\mu(X)=\abs{X}-1$. 

Two operations on matroids are especially important for us. 
First, the {\em localization}\/ of a matroid $\M$ at a flat $X\in L(\M)$ 
is the matroid $\M_X$ whose ground set is $X$ and whose independent 
sets are the independent sets of $\M$ completely contained in $X$. 
(This operation is also known as the restriction of $\M$ to $X$.) 
And second, given two matroids, $\M_1=(\cE_1,\cI_1)$ and 
$\M_2=(\cE_2,\cI_2)$ with disjoint ground sets, 
their {\em direct sum}, $\M_1\oplus \M_2$, is the matroid with 
ground set $\cE_1 \sqcup \cE_2$ and collection 
of independent sets $\{ I_1 \sqcup I_2 : I_1 \in \cI_1, I_2 \in \cI_2 \}$. 

An {\em arrangement of hyperplanes}\/ is a finite set $\A$ of 
codimension-$1$ linear subspaces in a finite-dimensional, 
complex vector space $\C^{\ell}$.  
The combinatorics of the arrangement is encoded in its 
intersection lattice, $L(\A)$; this is the poset of all 
intersections of $\A$, ordered by reverse inclusion, and 
ranked by codimension (see \S\ref{subsec:arr-lat} for more on this). 
A matroid $\M$ is said to be {\em realizable}\/ (over $\C$) 
if there is an arrangement $\A$ such that $L(\M)=L(\A)$. 

\begin{example}
\label{ex:unif-mat}
The {\em uniform matroid}\/ $U_{r,n}$ of rank $r\ge 2$ on ground set $[n]$ with $n\ge 2$ 
is characterized by the property that all bases have size $r$. This is a simple, 
connected matroid, with rank function $\rk(S) = \min(\abs{S}, r)$. Its poset of flats 
consists of all subsets of $[n]$ with of size less than $r$ or equal to $n$. 
It is supersolvable if and only if $r=2$ or $r=n$, and it can be realized by a generic 
arrangement of $n$ hyperplanes in $\C^r$. 
A special case is $U_{n,n}$, the {\em free matroid}\/ of rank $n$, for which 
all subsets of $[n]$ are independent and the lattice of flats is the Boolean lattice. 
This matroid can be realized by the 
Boolean arrangement of coordinate hyperplanes in $\C^n$. 
\end{example}

\begin{example}
\label{ex:graphic-mat}
To a simple graph $\Gamma$ (a graph with no loops or multiple edges) 
on vertex set $[n]$ there corresponds the {\em graphic 
matroid}\/ $\M(\Gamma)=\{\cE, \cI)$ on ground set the edge set of $\Gamma$  
and with independent sets the forests in $\Gamma$. Such a matroid 
can be realized by the arrangement $\A(\Gamma)$ in $\C^n$ consisting of the 
hyperplanes $H_{i,j}=\{z_i-z_j=0\}$ for all edges $\{i,j\}\in \cE$. In particular, 
if $\Gamma$ is the complete graph on $n$ vertices $K_n$, then $\A(K_n)$ is 
the reflection arrangement of type $\operatorname{A}_{n-1}$, also known as the 
braid arrangement in $\C^n$. As shown by Stanley \cite{Stanley}, a graphic 
matroid $\M(\Gamma)$ is supersolvable if and only if the graph $\Gamma$ 
is chordal. 
\end{example}

\subsection{Multinets on matroids}
\label{subsec:multinets}

Multinets on matroids are combinatorial structures that arise from a partition 
of a matroid's ground set. These objects, introduced by Falk and Yuzvinsky \cite{FY07} 
and Marco Buzun\'{a}riz \cite{MB09}, play a fundamental role in the study of hyperplane 
arrangements and their resonance varieties.

A {\em weak $(k,d)$-multinet}\/ on a matroid $\M = (\cE, \cI)$ is a partition of the 
ground set $\cE = \cE_1 \sqcup \cdots \sqcup \cE_k$ into $k \ge 3$ non-empty 
subsets, called parts, together with a multiplicity function $m \colon \cE \to \N$, 
such that the following conditions are met:
\begin{enumerate}[label=(\arabic*), itemsep=2pt]
\item \label{mu1}
The sum of multiplicities in each part is a constant integer $d \ge 1$:
\[
\sum_{u \in \cE_\alpha} m_u = d \quad \text{for all } \alpha \in [k].
\]
\item \label{mu2}
For any two points $u,v \in \cE$ in distinct parts, their span $u \vee v$ is 
an irreducible flat of rank $2$. The set
\[
\XX = \{ u \vee v : u \in \cE_{\alpha}, v \in \cE_{\beta} \text{ for } \alpha \ne \beta \}
\]
of such flats is called the {\em base locus}\/ of the multinet.

\item \label{mu3}
For each rank-$2$ flat $X$ in the base locus $\XX$, the sum of multiplicities 
of points in $X$ belonging to any single part is a constant integer $n_X$ 
(called the order of the flat):
\[
n_X = \sum_{w \in X \cap \cE_\alpha} m_w \quad \text{is independent of } 
\alpha \in [k] \text{ such that } X \cap \cE_\alpha \neq \emptyset.
\]
\end{enumerate}

A weak multinet, denoted by $\NN$, is called a {\em multinet}\/ if it satisfies 
the following additional connectivity condition:
\begin{enumerate}[label=(\arabic*), start=4, itemsep=2pt]
\item \label{mu4}
For any two points $u,v$ within the same part $\cE_\alpha$, there exists a 
path of points in $\cE_\alpha$ connecting them, where each consecutive 
pair of points spans a flat that is {\em not}\/ in the base locus $\XX$.
\end{enumerate}

We sometimes refer to these objects as $(k,d)$-multinets or simply $k$-multinets.
Without an essential loss of generality, we may assume that
$\gcd\{m_u\}_{u\in \cE}=1$. It follows from the definitions (see \cite{FY07, Su-toul})  
that the following equalities must hold:
\begin{enumerate}[label=(\roman*), itemsep=2pt]
\item The sum of all multiplicities equals the product of the number of 
classes and the degree: $\sum_{u\in \cE} m_u=k d$. 
\item For any given element in the ground set, the sum of the orders 
over all base locus flats containing it equals the degree: 
$\sum_{X\in \XX:  u\in  X} n_X= d$, for any $u\in \cE$. 
\item The sum of squares of the orders of all base locus flats is equal 
to the square of the degree: $\sum_{X\in \XX} n_X^2 = d^2$.
\end{enumerate}

When the matroid $\M$ is realizable by a hyperplane arrangement, 
there is an additional relationship 
between the numerical data associated to $\M$ and a multinet on it. 
Indeed, as shown in \cite[Theorem~4.2]{FY07}, the following 
Riemann--Hurwitz type formula holds in that case:
\begin{equation}
\label{eq:RH}
3+\abs{\XX} \ge 2\abs{\cE}- (k-2) \Big( 3d - \sum_{X\in \XX} n_X \Big) 
- \sum_{X\in L_2(\M)\setminus \XX} \mu(X) .
\end{equation}

\begin{question}
\label{quest:EH}
Is the inequality \eqref{eq:RH} satisfied for $(k,d)$-multinets on 
non-realizable matroids?
\end{question}

As noted in \cite[Remark 3.6]{FY07}, every weak multinet can be
refined to a multinet with the same base locus $\XX$; the refinement 
breaks down some of the original parts into smaller, single-element parts, 
which then satisfy the connectivity condition. Here is a 
concrete example that illustrates how this may be done.

\begin{example}
\label{ex:weak-multinet}
Let $\M=U_{2,3}\oplus U_{2,3}$ be the matroid on ground set $[6]$ 
whose set of circuits is $\CC=\{\{1,2,3\}, \{4,5,6\}\}$. Then $\M$ admits a weak $(3,2)$-multinet 
$\NN$ with parts $\{1,4\}, \{2,5\}, \{3,6\}$, all multiplicities equal to $1$, and base locus 
$\XX=\CC=L_2(\M)$. Observe that $\NN$ is not a multinet, as condition \ref{mu4}
is not satisfied: points $1$ and $4$ are in the same part, but cannot be connected 
by a chain of $2$-flats outside $\XX$, as all $2$-flats are circuits.
Nonetheless, $\NN$ may be refined to a multinet $\NN'$ whose parts are 
the singletons, and whose multiplicities and base locus are the same as those of $\NN$.
\end{example}

\subsection{Reduced multi-nets and nets}
\label{subsec:red multi}

Once again, let $\NN$ be a $(k,d)$-multinet on a matroid $\M=(\cE,\cI)$. 
If all the multiplicities $m_{u}$ for $u\in \cE$ are equal to $1$, the multinet 
is said to be {\em reduced}.  If $n_X=1$, for all $X\in \XX$, 
the multinet is called a {\em $(k,d)$-net};  in this case, 
the multinet is reduced, and every flat in the base locus contains precisely 
one element from each class. Moreover, $\abs{\cE}=kd$ and $\abs{\XX}=d^2$. 
A $(k,d)$-net $\NN$ is {\em non-trivial}\/ if $d>1$, 
or, equivalently, if the matroid $\M$ has rank at least $3$.
If $\M$ has no $2$-flats of size larger than $3$, then any
multinet on $\M$ is a $3$-net, see \cite{DF17}. 

Let $\NN$ be a multinet on a matroid $\M=(\cE,\cI)$, with parts 
$\cE_1,\dots, \cE_k$. For each flat $X\in L_2(\M)$, let us write 
$\supp_{\NN}(X)=\{\alpha \in [k] : \cE_{\alpha}\cap X \ne \emptyset\}$. 
Evidently, $|\!\supp_{\NN}(X)| \le \abs{X}$.  
Notice also that $|\!\supp_{\NN}(X)|$ is either equal to $1$ (in which case 
we say $X$ is {\em mono-colored}), or to $k$ (in which case we say 
$X$ is {\em multi-colored}).  As shown in \cite{PS-plms17}, 
if $\M$ has no $2$-flats of multiplicity $kr$, for any $r>1$, 
then every reduced $k$-multinet on $\M$ is a $k$-net. 

Work of Kawahara \cite{Ka07} shows that nets on matroids abound:    
for any $k\ge 3$, there is a (simple) matroid $\M$ supporting a non-trivial $k$-net. 
On the other hand, work of Yuzvinsky \cite{Yu04, Yu09} 
and Pereira--Yuzvinsky \cite{PY08} 
shows that, if $\NN$ is a $k$-multinet on a realizable matroid, 
with base locus of size greater than $1$, then $k=3$ 
or $4$; furthermore, if $\NN$ is not reduced, then $k$ 
must equal $3$.

Given a matroid $\M$ and  a subset $S$ of its ground set $\cE$, 
we say that $S$ is {\em line-closed}\/ 
(in $\M$) if $X$ is closed in $\M$, for all $X\in L_2(S)$.  
Clearly, this property is stable under intersection.
As shown in \cite{PS-plms17}, if $\M$ supports a $k$-net, then 
each part $\cE_{\alpha}$ is line-closed in $\M$. 
An alternative characterization of nets was given in \cite{PS-plms17}: 
A $k$-net on a matroid $\M$ is a partition of its ground set into 
non-empty blocks, $\cE =\cE_{1}\sqcup \cdots \sqcup \cE_{k}$, with the property that, 
for every $u\in \cE_{\alpha}$ and $v\in \cE_{\beta}$ with $\alpha \ne \beta$ 
and every $\gamma\in [k]$, 
\begin{equation}
\label{eq=altnet}
\abs{(u \vee v) \cap \cE_{\gamma}}=1.
\end{equation}
Consequently, a $3$-net on a matroid $\M=(\cE,\cI)$ is simply a partition of $\cE$ 
into three non-empty subsets $\cE_1, \cE_2, \cE_3$  with the property that, for each 
pair of points $u,v\in \cE$ in different parts, we have $u\vee v=\{ u,v,w \}$, for some 
point $w$ in the third part. 

\subsection{Latin squares and realizable $3$-nets}
\label{subsec:latin}

The last observation points to an intimate connection between $3$-nets and a 
combinatorial structure that goes back to antiquity. A {\em Latin square}\/ of size 
$d$ is a $d \times d$ matrix where each row and each column is a permutation 
of the set $[d]$; as such, it represents the multiplication table for a quasi-group. 
Now, if a matroid $\M$ on ground set $\cE$ admits a $(3,d)$-net 
with parts $\cE_1, \cE_2, \cE_3$, then the multi-colored rank-$2$ flats define a 
Latin square $\Lambda$. To see this, we label the points in the first two parts as 
$\{u_1^1, \dots, u_d^1\}$ and $\{u_1^2, \dots, u_d^2\}$. The net's properties 
ensure that for each pair of points $(u_p^1, u_q^2)$, there is a unique point in 
the third part, $u_r^3 \in \mathcal{E}_3$, such that $\{u_p^1, u_q^2, u_r^3\}$ 
forms a rank-$2$ flat. We then construct the matrix $\Lambda$ where the 
$(p,q)$-entry is the integer $r$. A similar procedure shows that a $k$-net 
is encoded by a $(k-2)$-tuple of orthogonal Latin squares.

Particularly simple is the following construction, due to 
Kawahara \cite{Ka07}: given any Latin square, there is a 
matroid with a $3$-net realizing it, such that each submatroid 
obtained by restricting to the parts of the $3$-net is a uniform matroid. 
In turn, some of these matroids may be realized by 
line arrangements in $\CP^2$. 

Given a finite group $H$ of order $d$, we say that $H$ may be 
realized by a $3$-net in $\CP^2$ if there is a $(3, d)$-net on a 
line arrangement in $\CP^2$ and there is a way to 
identify the blocks of the net to obtain the
multiplication table of $H$. For instance, if $H$ is equal 
to $\Z_2$, $\Z_3$, $\Z_4$, or $\Z_2\times \Z_2$, 
then the corresponding realizations are the braid arrangement, 
the Pappus $(9_3)_1$ configuration, the Kirkman configuration, 
and the Steiner configuration, respectively. 
In \cite{Yu04, Yu12}, Yuzvinsky gave several conditions under 
which a finite group $H$ may be realized by a $3$-net in $\CP^2$. 
For example, if $H$ is a subgroup of the torus $S^1\times S^1$, then 
$H$ can be realized, but the group $\Z_2^3$ cannot. 
Moreover, if $H$ is an abelian group that has an element of order 
at least $10$, then $H$ can be realized by a $3$-net in $\CP^2$ if and only
if $H$ has at most two invariant factors. Pereira and Yuzvinsky \cite{PY08} 
realized all the dihedral groups, while 
Urz\'ua \cite{Uz} realized the quaternion group $Q_8$. 
Work of Korchm\'aros, Nagy, and Pace \cite{NP13, KNP14} 
shows that these are the only other finite subgroups 
of $\PGL(2,\C)$ that can be realized by a $3$-net in $\CP^2$, 
thereby answering a question from \cite{Yu12}.

A $3$-net in $\CP^2$ is said to be {\em algebraic}\/ if there is a cubic curve $C$ 
in the dual projective plane (the space whose points correspond to the lines 
of the original plane) such that $C$ contains the points dual to the lines of the net 
in the set of its regular points. As shown in \cite{Yu12}, if $H$ is a finite 
abelian group that is either cyclic or has at least one element of order 
at least $10$, then every realization of $H$ by a $3$-net in $\CP^2$ is algebraic.
Moreover, as shown in \cite{Yu12, KNP14}, every $3$-net realizing a finite 
cyclic group or the direct product of two cyclic groups is algebraic. We 
end this section with a question posed by Yuzvinsky in \cite{Yu12}.

\begin{question}[\cite{Yu12}]
\label{quest:alg-net}
Are all $3$-nets in $\CP^2$ algebraic? 
\end{question}

With these core combinatorial structures---matroids, their lattices of flats, and multinets---in place, 
we have established the necessary foundation. The discussion of realizable nets and their 
connection to Latin squares and finite groups demonstrates that purely combinatorial properties 
of matroids can have profound implications for their geometric and algebraic realizations. 
In the following section, we will build upon this foundation by introducing new algebraic 
tools needed to probe these relationships from a different viewpoint.

\section{Algebras associated to matroids}
\label{sect:OS-holo}

This section introduces two algebraic structures associated with a matroid $\M$, 
each capturing its combinatorics in a distinct manner. The Orlik--Solomon algebra 
$\OS(\M)$, a graded anti-commutative algebra, encodes the matroid’s circuits 
and their dependencies. The holonomy Lie algebra $\h(\M)$, defined via relations among 
the $2$-flats, encodes the lattice of flats’ dependencies in a Lie-theoretic framework. 
Together with related Lie and homological invariants (the holonomy Chen Lie algebra 
and the infinitesimal Alexander invariant), these algebras illuminate matroid combinatorics 
and their topological connections to hyperplane arrangements.

\subsection{The Orlik--Solomon algebra}
\label{subsec:OS}
Given a simple matroid $\M$, let $V=\bigoplus_{u\in\cE} \Z e_u$ be the free 
abelian group on the ground set $\cE$, with its distinguished basis $\{e_u\}_{u\in\cE}$. 
Furthermore, let $E=\bwedge V$ be the exterior algebra on $V$, with graded pieces 
$E^k=\bwedge^k V$ for $0\le k\le n$, where $n=\abs{\cE}$. 
Define a degree $-1$ derivation $\partial \colon E \to E$ by setting $\partial(1) = 0$, 
$\partial(e_u) = 1$, and then extending to the whole of $E$ via the graded Leibniz rule, 
\begin{equation}
\label{eq:partial}
    \partial(e_{u_1}\wedge \dots \wedge e_{u_k}) = \sum_{i=1}^k (-1)^{i-1} e_{u_1} 
    \wedge \dots \wedge \widehat{e_{u_i}} \wedge \dots \wedge e_{u_k} 
 \end{equation}
for $k\ge 2$.
For a $k$-tuple $S=(u_1,\dots, u_k)$ of elements in $\cE$, we denote by $e_S$  
the element $e_{u_1} \wedge \dots \wedge e_{u_k}$ of $E^k$. Let $I=I(\M)$ 
be the ideal of $E$ generated by the elements $\partial(e_S)$, with 
$S$ a circuit of $\M$. Since all elements $\partial(e_S)$ are homogeneous, 
$I=\bigoplus_{k\ge 2} I^k$ is a graded ideal, generated in degrees 
$2$ and higher. The (integral) \emph{Orlik--Solomon algebra}\/ of 
the matroid $\M$ is defined as 
\begin{equation}
\label{eq:OS}
\OS(\M)=E/I(\M).
\end{equation}
This algebra was first defined by Orlik and Solomon in \cite{OS} when 
the matroid $\M$ can be realized by a complex hyperplane arrangement $\A$, in 
which case $\OS(\M)$ is isomorphic to the cohomology ring of the complement 
of $\A$ (see \S\ref{subsec:arr-lat}).

The OS-algebra $A=\OS(\M)$ is a graded, graded-commutative, $\Z$-algebra, 
with graded pieces $A^k=E^k/I^k$. Since $I^0=I^1=0$, the canonical projection 
$\pi_{\M}\colon E \to A$ identifies $A^0=E^0 \cong \Z$ and 
$A^1=E^1\cong \Z^n$. By a slight abuse of notation, we will also 
denote by $e_u$ the basis elements $\pi_{\M}(e_u)$ of $A^1$. 
It turns out that each of the graded pieces $A^k$ is a free 
abelian group, of rank 
\begin{equation}
\label{eq:betti}
b_k(\M)=(-1)^k \sum_{X\in L_k(\M)}  \mu(X),
\end{equation}
where recall $\mu\colon L(\M) \to \Z$ 
is the M\"{o}bius function of the lattice of  
flats of $\M$. We will denote by 
\begin{equation}
\label{eq:poin}
P(\M,t)\coloneqq \Hilb(A(\M),t)=\sum_{k\ge 0} b_k(\M) t^k
\end{equation}
the Poincar\'e polynomial of $\M$. Plainly, the OS-algebra $\OS(\M)$ only depends 
on the lattice of flats $L(\M)$. 

When the matroid decomposes as a direct sum, 
$\M=\M_1\oplus \M_2$, its lattice of flats decomposes as a direct product, 
$L(\M)=L(\M_1)\times L(\M_2)$, from which it follows that its OS-algebra 
decomposes as a tensor product, $\OS(\M)\cong \OS(\M_1)\otimes \OS(\M_2)$.

Since $\partial^2=0$ and the ideal $I$ is generated by elements of the form 
$\partial(e_S)$, we have that $\partial(I)=0$. Therefore, the map $\partial\colon E\to E$ 
induces a derivation $\partial_A\colon A\to A$.  Consider the homomorphism 
$\left.\partial\right|_{V}\colon V\to \Z$ which sends each basis element 
$e_u$ to $1$. Let 
\begin{equation}
\label{eq:vbar}
\overline{V} = \ker(\partial|_{V})=
\Big\{w\in V \colon \sum_{u\in \cE}  w_u=0\Big\} .
\end{equation}
As observed in \cite{De16}, the subalgebra $\overline{A}$ of $A$ 
generated by $\overline{V}$ coincides with $\ker(\partial_A)$.
The subalgebra $\overline{A}=\overline{A}(\M)$ is called the 
{\em projective Orlik--Solomon algebra}\/ of $\M$; the terminology 
comes from the geometric interpretation of $\overline{A}$ in the 
realizable case as the cohomology ring 
of the projectivized arrangement complement (see \S\ref{subsec:arr-lat}).

An Orlik--Solomon algebra $A=E/I$ is called {\em quadratic}\/ if 
the OS-ideal $I$ is generated in degree $2$, that is, $I=(I^2)$. 
A much stronger condition is that $I=I(\M)$ admit a quadratic Gr\"obner basis; 
as shown by Bj\"orner and Ziegler \cite{BZ}, this happens precisely when 
$\M$ is supersolvable.

\begin{example}
\label{ex:os-uniform}
Let $\M=U_{r,n}$ be the uniform matroid of rank $r>1$ on ground set $[n]$. 
The circuits in $L(\M)$ are all the subsets of $[n]$ of size $r+1$. Thus, 
the OS-algebra $A=A(\M)$ is the quotient of the exterior algebra 
$E$ on generators $e_1,\dots, e_n$ by the ideal $I$ generated 
by $\partial (e_S)$ for all $S\in\binom{[n]}{r+1}$. Therefore, 
$A$ is quadratic if and only if $\M$ is the free matroid $U_{n,n}$, 
in which case $A=E$ and $\M$ is supersolvable.
\end{example}

\begin{example}
\label{ex:super}
Let $\M=\M(K_n)$ be the graphic matroid of the complete graph $K_n$, 
whose lattice of flats is the lattice of partitions of the set $[n]$ (with 
opposite order). 
As noted in Example \ref{ex:graphic-mat}, this matroid is supersolvable. 
Moreover, the OS-algebra of $\M$ is the quotient of the exterior algebra 
$E$ on generators $\{ e_{ij} : 1\le i < j \le n \}$ by the ideal $I$ generated 
by the set $\{ e_{ij}e_{jk} - e_{ij}e_{ik} + e_{jk}e_{ik} : 1\le i < j < k\le n \}$, 
which forms a quadratic Gr\"obner basis for $I$. Finally, the Poincar\'e polynomial 
of $\M$ factors as $P(\M,t)=\prod_{j=1}^{n-1} (1+jt)$.
\end{example}

\subsection{Holonomy Lie algebra}
\label{subsec:holo}

We now describe another algebraic invariant that captures the combinatorics 
of the matroid $\M$, but from a different perspective (a dual one). 
This is the holonomy Lie algebra $\h(\M)$, which was first considered 
by Kohno \cite{Ko83} in the context of hyperplane arrangements, and then in the 
works of several authors, including 
\cite{SY97, PS-imrn04, PS-cmh06, DeS06, LS09, De10, 
Lofwall-16, Lofwall-20, PS-ejm20, Su-decomp}. 

As before, let $V$ be the free abelian group on the ground set $\cE$. 
The dual $V^{\vee}=\Hom(V,\Z)$ of the free abelian group $V$ is also 
free abelian, with dual basis $\{x_u=e_u^{\vee}\}_{u\in \cE}$. 
Let $\Lie (V^{\vee})$ be the free Lie algebra on $V^{\vee}$. 
This is a graded Lie algebra, with grading given by bracket-length, 
and it is generated by its degree $1$ piece, $\Lie_1(V^{\vee})=V^{\vee}$. 
Since $V$ has finite rank, the $\Z$-linear map $[x_u,x_v]\mapsto x_u\wedge x_v$ 
identifies $\Lie_2(V^{\vee})$ with $V^{\vee}\wedge V^{\vee}=E^2$. Each graded 
piece $\Lie_r(V^{\vee})$ is a free abelian group, of rank 
\begin{equation}
\label{eq:witt}
\frac{1}{r}\sum_{d\mid r} \mu(d) n^{r/d},
\end{equation}
where $n=\rank V=\abs{\cE}$ 
and $\mu\colon \N\to \Z$ is the classical M\"{o}bius function; in particular, 
if $r$ is a prime, then $\Lie_r(V^{\vee})$ has rank $(n^r-n)/r$.

Let $\pi_{\M}^\vee\colon \OS^2(\M)^{\vee}\to V^{\vee}\wedge V^{\vee}$ 
be the map dual to the canonical projection $\pi_{\M}\colon E^2 \to \OS^2(\M)$. 
The {\em holonomy Lie algebra}\/ of $M$ is defined as
\begin{equation}
\label{eq:holo-Lie}
    \h(\M)=\Lie (V^{\vee}) / \ideal \big(\! \im \big( \pi_{\M}^{\vee} \big) \big),
\end{equation}
the quotient of the free Lie algebra on $V^{\vee}$ by the Lie ideal generated by the 
image of $\pi_{\M}^{\vee}$, viewed as a subset of $\Lie_2(V^{\vee})$ via the 
aforementioned identifications. Since this ideal is a homogeneous (quadratic) ideal, 
the holonomy Lie algebra $\h=\h(\M)$ inherits the structure of a graded Lie algebra from 
that of $\Lie (V^{\vee})$. We shall denote its graded pieces by $\h_r(\M)$ for $r\ge 1$, 
and their ranks by 
\begin{equation}
\label{eq:lcs-ranks}
\phi_r(\M) = \rank \h_r(\M).
\end{equation}
Both $\h_1$ and $\h_2$ are free abelian groups, of ranks 
$\phi_1= n$ and $\phi_2=\sum_{X\in L_2(\M)} \binom{\mu(X)-1}{2}$, respectively. 

By construction, the holonomy Lie algebra of $\M$ is determined by $L_{\le 2}(\M)$, 
the lattice of flats of $\M$ truncated in degree $2$. 
An explicit presentation for this Lie algebra (first given by Kohno \cite{Ko83} in 
the realizable case) is as follows:
\begin{equation}
    \label{eq:holo-pres}
    \h(\M)=\Lie( x_u: u\in \cE)\Big\slash \ideal \,
    \Big\{ \, \Big[x_u , \sum_{v\in X} x_{v}\Big] : 
    X\in L_2(\M), \ 
    u\in X \,
    \Big\}  .
\end{equation}
The relations in $\h(\M)$ can be viewed as enforcing compatibility with the 
circuits of rank-$2$ flats. As noted previously, if $\M=\M_1\oplus \M_2$, we 
have that $L(\M)=L(\M_1)\times L(\M_2)$ and 
$\OS(\M)\cong \OS(\M_1)\otimes \OS(\M_2)$; 
either of these decompositions implies that 
$\h(\M)\cong \h(\M_1)\times \h(\M_2)$.

\begin{example}
\label{ex:holo-U2n}
Let $\M=U_{2,n}$ be the uniform matroid of rank $2$ on ground set $[n]$. 
The holonomy Lie algebra of $\M$ is the quotient 
of $\Lie(x_1,\dots ,x_n)$ by the ideal generated by $[x_i,\sum_{j=1}^n x_j]$ 
for $1\le i\le n$. Hence, $\h(\M)$ is isomorphic to $\Lie(\Z^{n-1})\times \Lie(\Z)$, 
and thus its graded ranks are given by $\phi_1=n$ and 
$\phi_r=\tfrac{1}{r}\sum_{d\mid r} \mu(d) (n-1)^{r/d}$ for $r\ge 2$. 
In particular, when $n=2$, we have that $\h(\M)\cong \Lie(\Z^2)$ 
and $\phi_r=0$ for $r\ge 2$. 
\end{example}

\begin{example}
\label{ex:holonomy-graph}
Let $\M=\M(\Gamma)$ be the graphic matroid associated to a simple graph 
$\Gamma$ with edge set $\cE$.  Then $\h(\M)$ is the quotient of 
$\Lie(\cE)$ by the ideal generated by the brackets of the form 
$[e_1 , e_2 + e_3]$ for all triples of edges $\{e_1,e_2,e_3\}$ 
forming a triangle in $\Gamma$ and by the brackets 
$[e_1, e_2]$ for all pairs of edges $\{e_1,e_2\}$
for which $\{e_1,e_2,e\}$ is not a triangle in $\Gamma$, 
for any $e\in \cE$. We will come back to the holonomy ranks 
of $\h(\M)$ in Example \ref{ex:lcs-graph}.
\end{example}

\subsection{Computing the holonomy ranks}
\label{subsec:phik}

Fix a field $\k$ of characteristic $0$ and let $\h=\h(\M)\otimes \k$ 
be the holonomy Lie algebra of the matroid $\M$ over $\k$. Its 
universal enveloping algebra, $U(\h)$, is a quadratic algebra, endowed 
with the bracket-length filtration induced from that of $\h$. 
By the Poincar\'{e}--Birkhoff--Witt theorem, 
$\gr(U(\h))\cong \Sym(\h)$, and therefore 
\begin{equation}
\label{eq:pbw}
\Hilb(U(\h),t)=\prod_{k\ge 1}(1-t^r)^{-\phi_r},
\end{equation}
where $\phi_r=\phi_r(\M)=\dim_{\k} \h_r$. 
Work of Shelton--Yuzvinsky \cite{SY97} 
(see also \cite{PaY, SW-forum} for related results) shows that 
\begin{equation}
\label{eq:sy}
U(\h)=(\qA)^!,
\end{equation}
where $\qA=E/(I^2) \otimes \k$ is the quadratic closure of $A_{\k}=A\otimes \k$ 
and $(\qA)^!$ is its quadratic dual (the defining relations of $(\qA)^!$ form the orthogonal 
complement to those of $\qA$ inside $A^1_{\k} \otimes_{\k} A^1_{\k}$, see \cite{PP}).
Computing the holonomy ranks 
$\phi_r$, then, is equivalent to computing the Hilbert
series of the quadratic dual of the Orlik--Solomon algebra of $\M$ over $\k$. 

On the other hand, L\"{o}fwall showed in \cite{Lofwall-86} that the quadratic 
dual of $A_{\k}$ is isomorphic 
to $[\Ext^1_{A_{\k}}(\k,\k)]\coloneqq \bigoplus_{i\ge 0} \Ext^i_{A_{\k}}(\k,\k)_i$, 
the linear strand in the Yoneda algebra $\Ext_{A_{\k}}(\k,\k)$. It then follows from 
\eqref{eq:pbw} and \eqref{eq:sy} that 
\begin{equation}
\label{eq:lin-strand}
\prod_{r\ge 1}(1-t^r)^{\phi_r} = \sum_{i\ge 0} b_{ii} t^i , 
\end{equation}
where $b_{ij}=\dim_{\k} \Ext^i_{A_{\k}}(\k,\k)_j$ are the bigraded Betti numbers 
of the Ext-algebra of $A_{\k}$. Therefore, computing the holonomy ranks 
$\phi_r$ is also equivalent to computing the Betti numbers of the linear strand 
of a minimal free resolution of $\k$ over $A_{\k}$.

Now suppose $\M$ is supersolvable.  Then, as noted in \S\ref{subsec:OS}, 
the OS-algebra $A_{\k}$ admits a quadratic Gr\"obner basis. Therefore, 
$A_{\k}$ is a {\em Koszul algebra}, that is, $b_{ij}=0$ if $i\ne j$, and hence, 
the Hilbert series of $A_{\k}$ and its quadratic dual are related by the 
Koszul duality formula,
\begin{equation}
\label{eq:Koszul-dual}
\Hilb(A_{\k},t) \cdot \Hilb(A_{\k}^!,-t)=1, 
\end{equation}
see for instance \cite{PP}. Using now formula \eqref{eq:pbw}, 
we conclude that the holonomy ranks 
$\phi_k=\phi_k(\M)$ are determined by the Betti numbers 
$b_k=b_k(\M)$ from \eqref{eq:betti} by the following equality in $\Z[\![t]\!]$:
\begin{equation}
\label{eq:lcs-ss}
\prod_{r\ge 1}(1-t^r)^{\phi_r} =\sum_{k\ge 0} (-1)^k b_kt^k .
\end{equation}

\begin{example}
\label{ex:lcs-kn}
As noted in Example \ref{ex:super}, the graphic matroid $\M(K_n)$ associated to the 
complete graph on $n$ vertices is supersolvable. Applying \eqref{eq:lcs-ss}, one finds that 
the holonomy ranks $\phi_r=\phi_r(\M(K_n))$ are given by 
$\prod_{r\ge 1}(1-t^r)^{\phi_r} =\prod_{j=1}^{n-1} (1-jt)$, a formula originally 
proved by Kohno \cite{Ko85} and Falk and Randell \cite{FR85} by different methods. 
\end{example}

\begin{example}
\label{ex:lcs-graph}
More generally, let $\M(\Gamma)$ be the graphic matroid associated to a graph 
$\Gamma$ on $n$ vertices, and let $\kappa_s$ be the number of complete subgraphs 
on $s$ vertices. The following formula, conjectured by Schenck and Suciu in \cite{SS-tams02} 
and proved by Lima-Filho and Schenck in \cite{LS09}, computes the holonomy ranks 
$\phi_r=\phi_r(\M(\Gamma))$:
\begin{equation}
\label{eq:GLCS}
\prod_{r=1}^{\infty} \left(1-t^r\right)^{\phi_r}=
\prod_{j=1}^{n-1} \big(1-j t\big)^{\DS{\sum_{s=j}^{n-1}(-1)^{s-j}
\tbinom{s}{j} \kappa_{s+1}
}} .
\end{equation}
\end{example}

Examples of (finitely presented) graded algebras which are Koszul but 
do not admit a quadratic Gr\"obner basis do exist, see e.g.~\cite{PP}. 
Yet, as noted by Yuzvinsky in \cite{Yu01}, no such example is known 
among OS-algebras of hyperplane arrangements. We then have this 
question, which is still open, some 25 years later (for either realizable 
of non-realizable matroids).

\begin{question}[\cite{Yu01}]
\label{quest:koszul}
Suppose the Orlik--Solomon algebra of a matroid $\M$ (over a field $\k$) 
is a Koszul algebra. Does it follow that the matroid is supersolvable?
\end{question}

\subsection{Holonomy Chen Lie algebra}
\label{subsec:holo-Chen}
The derived series of a Lie algebra $\g$ is defined recursively by $\g^{(0)}=\g$ 
and $\g^{(i+1)}=[\g^{(i)},\g^{(i)}]$ for $i\ge 1$. The terms $\g'=\g^{(1)}$ and $\g''=\g^{(2)}$ 
are known as the derived Lie algebra and the second derived Lie algebra of $\g$, 
respectively. The quotients $\g/\g^{(i)}$ are the maximal 
solvable quotients of $\g$; in particular, $\g/\g'$ is the abelianization  
and $\g/\g''$ is the maximal metabelian quotient of $\g$. 

Now suppose $\g$ is a finitely generated graded Lie algebra (over $\Z$), 
with graded pieces $\g_r$, for $r\ge 1$. Then both $\g'$ and $\g''$ are 
graded sub-Lie algebras. Thus, $\g/\g''$ is in a natural 
way a graded Lie algebra, with derived subalgebra $\g'/\g''$.  
By analogy with the definition given by Chen \cite{Chen51} 
for his numerical invariants of groups (see \S\ref{subsec:lcs}), we define 
the {\em Chen ranks}\/ of $\g$ to be 
\begin{equation}
\label{eq:inf chen ranks}
\theta_r(\g)=\rank\, (\g/\g'')_r.
\end{equation}

For instance, if $\g=\Lie(\Z^n)$ is the free Lie algebra of rank $n$, the Chen groups 
$(\g/\g'')_r$ are all free abelian, of ranks $\theta_r=\theta_r(\g)$ given by 
$\theta_1=n$ and 
\begin{equation}
\label{eq:chen-free}
\theta_r=(r-1)\binom{n+r-2}{r}
\end{equation}
for $r\ge 2$, see \cite{Chen51, CS-conm95, PS-imrn04}.

Following \cite{PS-imrn04}, we associate to a graded Lie algebra $\g$ as above 
a graded module $\B(\g)$ over the symmetric algebra $S=\Sym(\g_1)$, as follows.  
The adjoint representation of $\g_1$ on $\g/\g''$ 
defines an $S$-action on $\g'/\g''$, given by $h\cdot \bar{x}=\overline{[h,x]}$, 
for $h\in \g_1$ and $x\in \g'$.  Clearly, this action is compatible with the 
grading on $\g'/\g''$.  We let the {\em infinitesimal Alexander invariant}\/ 
of $\g$ to be the graded $S$-module 
\begin{equation}
\label{eq:inf-alex}
\B(\g)=\g'/\g'' .
\end{equation}

Assume now that the graded Lie algebra $\g$ is generated 
in degree $1$, in which case $\g'=\bigoplus_{r\ge 2} \g_r$.  
Since the grading for $S$ starts in degree $0$, we are 
led to define the grading on $\B(\g)$ as 
$ \B(\g)_{r} = (\g'/\g'')_{r+2}$ for $r\ge 0$.
We then have the following `infinitesimal' version of a 
classical formula due to Massey, which relates the 
Chen ranks of $\g$ to the Hilbert series of 
$\B(\g)\otimes \Q$, viewed as a graded module 
over the polynomial ring $S\otimes \Q$.

\begin{proposition}[\cite{SW-mz17, Su-pisa24}]
\label{prop:inf Massey}
Let $\g$ be a finitely generated, graded Lie algebra generated in degree $1$.
Then the Chen ranks of $\g$ are given by
\begin{equation}
\label{eq:inf Massey}
\sum\limits_{r\ge 2}\theta_{r}(\g)\cdot t^{r-2}=\Hilb (\B(\g)\otimes \Q,t).  
\end{equation}
\end{proposition}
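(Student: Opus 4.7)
The plan is to show that the proposition is essentially a bookkeeping identity comparing two gradings on the same underlying graded vector space, once one passes to $\Q$-coefficients. The only real content is that the grading shift defining $\B(\g)$ exactly matches the degree shift in \eqref{eq:inf Massey}, and that nothing is lost by replacing $\g/\g''$ with $\g'/\g''$ in degrees $\ge 2$.

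First I would use the hypothesis that $\g$ is generated in degree $1$: this forces $\g_r \subseteq \g'$ for every $r \ge 2$, so that $\g' = \bigoplus_{r \ge 2} \g_r$ and the abelianization $\g/\g'$ is concentrated in degree $1$. Since $\g'' \subseteq \g'$, the natural graded map $\g'/\g'' \hookrightarrow \g/\g''$ identifies $(\g'/\g'')_r$ with $(\g/\g'')_r$ for each $r \ge 2$, while in degree $1$ we simply have $(\g/\g'')_1 = \g_1$. Therefore
\[
\theta_r(\g) \;=\; \rank (\g/\g'')_r \;=\; \rank (\g'/\g'')_r \qquad \text{for all } r \ge 2.
\]

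Next I would unwind the grading convention on $\B(\g)$. Recall $\B(\g)_{s} = (\g'/\g'')_{s+2}$ for $s \ge 0$, and tensoring with $\Q$ is exact on finitely generated abelian groups, so $\rank_{\Z}(\g'/\g'')_{s+2} = \dim_{\Q}\B(\g)_s \otimes \Q$. Substituting $s = r-2$,
\[
\Hilb(\B(\g) \otimes \Q, t)
\;=\; \sum_{s \ge 0} \dim_{\Q} \B(\g)_s \otimes \Q \cdot t^s
\;=\; \sum_{r \ge 2} \rank (\g'/\g'')_r \cdot t^{r-2},
\]
and combining with the first displayed equality yields \eqref{eq:inf Massey}.

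The hard part, such as it is, is keeping the two grading conventions straight: the Lie-theoretic grading on $\g/\g''$ starts in degree $1$ whereas the $S$-module grading on $\B(\g)$ starts in degree $0$, and one must verify that the shift by $2$ in the definition of $\B(\g)_s$ is precisely what absorbs the initial summand $\g_1$ of $\g/\g''$ and realigns the two generating series. Beyond that, the statement is formal; no Poincaré--Birkhoff--Witt, Koszul duality, or spectral-sequence input is needed, and the result holds in any characteristic once one agrees to pass to ranks (which is why the proposition is stated over $\Q$).
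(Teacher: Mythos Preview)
Your proof is correct. The paper does not supply its own proof of this proposition; it is stated with attribution to \cite{SW-mz17, Su-pisa24}, and the surrounding text already records the two facts you use---that $\g'=\bigoplus_{r\ge 2}\g_r$ when $\g$ is generated in degree~$1$, and that the grading on $\B(\g)$ is set so that $\B(\g)_s=(\g'/\g'')_{s+2}$---after which the identity is the formal bookkeeping you carry out.
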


Applying these considerations to the case when $\g=\h(\M)$ is 
the holonomy Lie algebra of a simple matroid $\M$, we define 
the {\em holonomy Chen Lie algebra}\/ of $\M$ 
as $\h(\M)/\h(\M)''$. This is again a graded Lie algebra; we call the 
ranks of its graded pieces, 
\begin{equation}
\label{eq:holo-Chen-ranks}
\theta_r(\M) = \theta_r(\h(\M))= \rank \big(\h(\M)/\h(\M)''\big)_r ,
\end{equation}
the {\em holonomy Chen ranks}\/ of the matroid $\M$. It follows from the definitions that 
$\theta_r(\M)\le \phi_r(\M)$ for all $r\ge 1$, with equality for $r\le 3$.

\begin{example}
\label{ex:chen-U2n}
Let $\M=U_{2,n}$. 
Since $\h(\M)\cong \Lie(\Z^{n-1})\times \Lie(\Z)$,
the holonomy Chen ranks $\theta_r=\theta_r(\M)$ are given by 
$\theta_1=n$ and $\theta_r=(r-1)\binom{n+r-3}{r}$ for $r\ge 2$. 
In particular, when $n=2$, we have that $\theta_r=0$ for $r\ge 2$. 
\end{example}

\begin{example}
\label{ex:chen-graphic}
The holonomy Chen ranks $\theta_r=\theta_r(\M(\Gamma))$ of a graphic matroid 
are given by $\theta_1=\kappa_2$, $\theta_2=\kappa_3$, and 
$\theta_r = (r-1) (\kappa_3 + \kappa_4)$ for $r\ge 3$, where recall 
$\kappa_s$ counts the number of $K_s$ subgraphs of the graph $\Gamma$. 
This formula was proved in \cite{SS-tams05}, with a new proof being now given in \cite{AFRS}.
The case when $\Gamma=K_n$ was originally done in \cite{CS-conm95}, while 
the case when $\kappa_4=0$ was treated in \cite{PS-cmh06}. 
\end{example}

\subsection{The local holonomy Lie algebra}
\label{subsec:holo-loc}

In this section, we give a ``local" lower bound for the holonomy ranks 
and the holonomy Chen ranks of an arbitrary (simple) matroid $\M$. 
Let $\M_X$ be the localization of $\M$ at a rank-$2$ flat $X\in L_2(\M)$. 
We observe that the inclusion $\M_X \inj \M$ induces a (split) monomorphism 
of OS-algebras, $\OS(\M_X)\to \OS(\M)$, which in turn yields an epimorphism 
of holonomy Lie algebras, $\h(\M)\surj \h(\M_X)$. 

Let us define the {\em local 
holonomy Lie algebra}\/ of the matroid $\M$ as the product of the holonomy 
Lie algebras of the localizations at its rank-$2$ flats, 
\begin{equation}
\label{eq:holo-loc}
\h(\M)^{\loc} = \prod_{X\in L_2(\M)} \h(\M_X).
\end{equation}
The maps $\h(\M)\surj \h(\M_X)$  
assemble into a morphism $\Pi\colon \h(\M) \to \h(\M)^{\loc}$, 
which we will write simply as $\Pi\colon \h\to \h^{\loc}$ 
if the matroid $\M$ is understood. 
The next theorem extends results of Papadima and Suciu 
\cite{PS-cmh06}, from the context of hyperplane 
arrangements to arbitrary (simple) matroids.

\begin{proposition}
\label{prop:holo-loc}
Let $\M$ be a matroid with holonomy Lie algebra $\h=\h(M)$, and let $\B(\h)=\h'/\h''$ 
be its infinitesimal Alexander invariant, viewed as a module over $S=\Sym(\h_1)$. 
\begin{enumerate}[label=(\arabic*)]
\item \label{h1} 
The morphism of graded Lie algebras $\Pi\colon \h \to \h^{\loc}$  
is a surjection in degrees $r\ge 3$ and an isomorphism in degree $r=2$. 
\item  \label{h2} 
The map $\Pi$ induces an epimorphism of graded $S$-modules 
$\overline{\Pi}\colon \B(\h) \surj \B(\h^{\loc})$.
\end{enumerate}
\end{proposition}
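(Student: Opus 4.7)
The plan is to prove Part (1) directly from the presentation of $\h(\M)$ and then deduce Part (2) as a formal consequence.

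For the degree-$2$ claim, I would start from the observation that $V^{\vee}\wedge V^{\vee}$ splits as a direct sum $\bigoplus_{X\in L_2(\M)} W_X$ with $W_X=\spn\{x_u\wedge x_v : u\ne v\in X\}$, because any two distinct points of $\cE$ lie in a unique rank-$2$ flat. Dualizing the Brieskorn-type decomposition $\OS^2(\M)=\bigoplus_X \OS^2(\M_X)$---which is immediate from the presentation, since the length-$3$ circuits generating the degree-$2$ part of the OS ideal are each supported on a single rank-$2$ flat---one sees that $\im(\pi_{\M}^{\vee})$ respects this splitting. Equivalently, each quadratic defining relation $[x_u,\sum_{w\in X}x_w]$ of \eqref{eq:holo-pres} lies in the summand $W_X$. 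Consequently $\h(\M)_2=\bigoplus_X W_X/R_X$, and each summand $W_X/R_X$ is by construction $\h(\M_X)_2$; identifying $\h^{\loc}_2$ with the same direct sum makes $\Pi_2$ the identity map, hence an isomorphism.

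For surjectivity in degrees $r\ge 3$ (indeed in all degrees $r\ge 2$), the essential combinatorial input is that two distinct rank-$2$ flats intersect in at most one point---otherwise they would share two points and thereby coincide. Given $X_0\in L_2(\M)$ and a target $\beta\in\h(\M_{X_0})_r$, I would write $\beta$ as a sum of iterated brackets $[x_{u_1},[x_{u_2},\ldots,x_{u_r}]\cdots]$ with all $u_i\in X_0$, and use the same formal expression to define a lift $\tilde\beta\in \h(\M)_r$. The key claim is that $\Pi(\tilde\beta)$ is concentrated in the $X_0$-component: projecting $\tilde\beta$ to $\h(\M_{X'})$ with $X'\ne X_0$ sends $x_{u_i}$ to $0$ whenever $u_i\notin X'$, and as soon as two distinct indices appear in the bracket (the only case in which it is nontrivial), they cannot both lie in $X_0\cap X'$, so at least one factor vanishes and the whole bracket dies. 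Summing such lifts over $X_0\in L_2(\M)$ exhausts $\h^{\loc}_r=\bigoplus_X\h(\M_X)_r$.

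Part (2) is then formal. Since $\Pi_r$ is surjective for every $r\ge 2$ and both derived subalgebras coincide with their degree-$\ge 2$ parts, the restriction $\Pi'\colon \h'\to (\h^{\loc})'$ is surjective. Any Lie algebra homomorphism maps the second-derived subalgebra into the second-derived subalgebra, so $\Pi'$ descends to a graded surjection $\overline{\Pi}\colon \B(\h)\surj \B(\h^{\loc})$; compatibility with the $S=\Sym(\h_1)$-action on both sides (with $\h_1$ acting on the target through $\Pi_1$) is immediate from the naturality of the adjoint representation. The main obstacle is the degree-$2$ isomorphism in Part (1): surjectivity in higher degrees is elementary once the pairwise-intersection property of rank-$2$ flats is isolated, and Part (2) is then formal, whereas the degree-$2$ isomorphism rests on the combinatorial Brieskorn-type splitting of $\OS^2(\M)$, which has to be verified directly from the matroid axioms since no topological Brieskorn theorem is available in the non-realizable case.
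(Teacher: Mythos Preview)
Your proposal is correct and follows essentially the same approach as the paper. The paper's own proof of Part~(1) simply says that the argument of \cite[Proposition~2.1]{PS-cmh06} for arrangements carries over verbatim, since it relies only on combinatorial properties of $L_{\le 2}(\M)$; you have spelled out precisely those properties---the Brieskorn-type splitting of $\OS^2(\M)$ over rank-$2$ flats for the degree-$2$ isomorphism, and the fact that distinct rank-$2$ flats meet in at most one point for surjectivity in higher degrees---so your argument is a faithful expansion of what the paper leaves implicit. For Part~(2) both you and the paper derive the surjection $\overline{\Pi}$ formally from the surjectivity of $\Pi$ on derived subalgebras; your remark on the $S$-module compatibility via $\Pi_1$ is a detail the paper omits but which is indeed needed.
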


\begin{proof}
The first statement is proved in a manner similar to the way 
\cite[Proposition~2.1]{PS-cmh06} is proved for hyperplane 
arrangements: the proof relies only on certain combinatorial properties of 
the lattice $L_{\le 2}(\M)$, which are identical for realizable and 
non-realizable simple matroids.

It follows from part \ref{h1} that the map $\Pi\colon \h\to \h^{\loc}$ 
restricts to a surjective morphism $\h' \to (\h^{\loc})'$, which in turn 
induces a surjection $\h'/\h'' \to (\h^{\loc})'/(\h^{\loc})''$ 
on abelianizations. This map coincides with the map $\overline{\Pi}$, 
and this completes the proof of part \ref{h2}.
\end{proof}

As a corollary, we obtain the following ``local" lower bounds for the holonomy 
ranks and the holonomy Chen ranks of a matroid.

\begin{corollary}
\label{cor:holo-bound}
For a matroid $\M$, write $\widetilde{L}_2(\M)\coloneqq \{X\in L_2(\M) : \abs{X}\ge 3\}$. 
The holonomy ranks $\phi_r(\M)$ and the holonomy Chen ranks $\theta_r(\M)$ 
admit the lower bounds
\begin{align}
\label{eq:phi-bound}
\phi_r(\M) &\ge \sum_{X\in \widetilde{L}_2(\M)} \phi_r(\M_X) =
\frac{1}{r} \sum_{d\mid r} \mu(d) \sum_{X\in \widetilde{L}_2(\M)}   \mu(X)^{r/d}
\\
\label{eq:theta-bound}
\theta_r(\M) &\ge \sum_{X\in \widetilde{L}_2(\M)} \theta_r(\M_X) = 
(r-1) \sum_{X\in \widetilde{L}_2(\M)} \binom{\mu(X)+r-2}{r} ,
\end{align}
valid for all $r\ge 2$, and with equality for $r=2$.
\end{corollary}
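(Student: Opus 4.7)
The plan is to treat the corollary as a direct consequence of Proposition \ref{prop:holo-loc} combined with the explicit computations for uniform matroids of rank $2$. I would organize the argument around two parallel tracks, one for $\phi_r$ and one for $\theta_r$, each following the same three-step template: (i) extract a rank inequality from the surjectivity statement in the proposition; (ii) use the product decomposition of $\h^{\loc}$ to turn the right-hand side into a sum over $L_2(\M)$; (iii) plug in closed-form values from the examples.

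For the first inequality, I would start by observing that Proposition \ref{prop:holo-loc}\ref{h1} gives a surjection $\h_r(\M)\surj \h^{\loc}_r(\M)$ for all $r\ge 2$, which (since both sides are finitely generated abelian groups) yields $\phi_r(\M)\ge \phi_r(\h^{\loc})$, with equality when $r=2$. Because $\h^{\loc}$ is a direct product of Lie algebras indexed by $X\in L_2(\M)$, its graded ranks split as $\phi_r(\h^{\loc})=\sum_{X\in L_2(\M)}\phi_r(\M_X)$. Next I would identify $\M_X$ with the uniform matroid $U_{2,|X|}$ (in a simple matroid, every triple in a rank-$2$ flat is dependent, so the restriction to $X$ has only $U_{2,|X|}$'s independence data) and invoke Example \ref{ex:holo-U2n}: the term with $|X|=2$ contributes zero for $r\ge 2$, and the remaining terms are $\tfrac{1}{r}\sum_{d\mid r}\mu(d)(|X|-1)^{r/d}$. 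Since $\mu(X)=|X|-1$ for any $X\in L_2(\M)$, interchanging the two summations reproduces the closed form displayed in \eqref{eq:phi-bound}.

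For the Chen rank bound, I would apply Proposition \ref{prop:holo-loc}\ref{h2}: the surjection $\overline{\Pi}\colon \B(\h)\surj \B(\h^{\loc})$ of graded $S$-modules remains surjective after tensoring with $\Q$, so Hilbert series satisfy the termwise inequality $\Hilb(\B(\h)\otimes\Q,t)\ge \Hilb(\B(\h^{\loc})\otimes\Q,t)$ coefficient by coefficient. Proposition \ref{prop:inf Massey} then translates this into $\theta_r(\M)\ge \theta_r(\h^{\loc})$ for every $r\ge 2$. As before, $\theta_r(\h^{\loc})=\sum_{X\in L_2(\M)}\theta_r(\M_X)$, and Example \ref{ex:chen-U2n} gives $\theta_r(\M_X)=(r-1)\binom{|X|+r-3}{r}=(r-1)\binom{\mu(X)+r-2}{r}$, vanishing when $|X|=2$. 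Summing yields \eqref{eq:theta-bound}. Equality at $r=2$ is automatic from the corresponding equality for $\phi_2$ together with the observation following \eqref{eq:holo-Chen-ranks} that $\theta_r=\phi_r$ for $r\le 3$.

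The only potentially subtle point is the passage from the $S$-module surjection $\overline{\Pi}$ to the Hilbert series inequality used in the Chen rank bound, but since rank equals $\Q$-dimension and tensoring with $\Q$ preserves surjectivity, this is immediate. Everything else is bookkeeping: matching $\M_X$ with $U_{2,|X|}$, using $\mu(X)=|X|-1$ on rank-$2$ flats, and discarding the $|X|=2$ summands where the uniform-matroid ranks vanish. No genuine obstacle is expected; the content of the corollary lies entirely in Proposition \ref{prop:holo-loc}.
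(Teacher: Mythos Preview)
Your proposal is correct and follows essentially the same approach as the paper: derive the inequalities from the surjectivity statements in Proposition~\ref{prop:holo-loc} (part~\ref{h1} for $\phi_r$, part~\ref{h2} together with Proposition~\ref{prop:inf Massey} for $\theta_r$), then identify $\M_X\cong U_{2,|X|}$ and plug in the closed forms from Examples~\ref{ex:holo-U2n} and~\ref{ex:chen-U2n}. Your write-up simply fills in more of the bookkeeping (the product decomposition of $\h^{\loc}$, the vanishing of the $|X|=2$ summands, and the $\theta_2=\phi_2$ argument for equality at $r=2$) than the paper's terse proof does.
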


\begin{proof}
The inequality in \eqref{eq:phi-bound} is a direct consequence of 
Proposition \ref{prop:holo-loc}, part \ref{h1}, whereas the inequality 
in \eqref{eq:theta-bound} follows from Proposition \ref{prop:holo-loc}, 
part \ref{h2} and formula \eqref{eq:inf Massey}.  

To establish the equalities in \eqref{eq:phi-bound} and \eqref{eq:theta-bound}, 
first note that each localized matroid $\M_X$ is 
isomorphic to the rank-$2$ uniform matroid $U_{2,\abs{X}}$ on ground 
set of size $\abs{X}=\mu(X)+1$. The claimed equalities 
now follow from the computations in Examples \ref{ex:holo-U2n} 
and \ref{ex:chen-U2n}.
\end{proof}

The inequality \eqref{eq:phi-bound} was first proved in the realizable case 
by Falk \cite{Fa89}, and reproved in the same context in \cite{PS-cmh06,Su-decomp}. 
The inequality \eqref{eq:theta-bound} was first proved in the realizable case 
by Cohen and Suciu \cite{CS-tams99}, and reproved in the same context 
in \cite{PS-cmh06,Su-decomp}. The next example shows that both 
inequalities can be strict, even for $\phi_3(\M)=\theta_3(\M)$. 

\begin{example}
\label{ex:braid-holo}
Consider the graphic matroid $\M=\M(K_n)$.
From the formulas in Example \ref{ex:chen-graphic}, we have that 
$\theta_3(\M)=2\binom{n+1}{4}$, which is strictly greater than  
$\sum_{X\in \widetilde{L}_2(\M)} \theta_3(\M_X)=2\binom{n}{3}$ 
if $n\ge 4$.
\end{example}

\subsection{Decomposable matroids}
\label{subsec:decomp-mat}

The notion of decomposability, introduced in the realizable case by 
Papadima and Suciu in \cite{PS-cmh06} and studied in detail 
in \cite{PS-ejm20, Su-decomp}, serves as a finiteness criterion 
for the complexity of the holonomy Lie algebra of a matroid $\M$: 
roughly speaking, it detects when $\h=\h(\M)$ is determined by 
its local version, $\h^{\loc}$, at least additively.

A matroid $\M$ is said to be {\em decomposable}\/ (over $\Z$) if the map 
$\Pi\colon \h \to \h^{\loc}$ restricts to an isomorphism 
$\h_3\isom \h_3^{\loc}$ in degree $3$. Likewise, there is an induced 
morphism $\Pi\otimes \k \colon\h\otimes \k \to \h^{\loc} \otimes \k$ 
for each field $\k$, and we say $\M$ is {\em decomposable over $\k$}\/ if 
this map is an isomorphism in degree $3$. This notion only depends 
on the characteristic of the field, since $\dim_{\k}( \h_r \otimes \k)$ 
depends only on $\ch(\k)$. Moreover, $\M$ is decomposable over $\Z$ 
if and only if it is decomposable over $\k=\Q$ and $\k=\Z_p$, for all primes $p$.

For instance, a graphic matroid $\M=\M(\Gamma)$ is decomposable (over $\Z$, 
or over a field $\k$) if and only if $\Gamma$ contains no $K_4$ subgraphs, 
see \cite[Proposition 7.1]{PS-cmh06}. Other examples of decomposable 
matroids include the $\operatorname{X}_3$, $\operatorname{X}_2$, and 
non-Pappus matroids, see \cite{PS-cmh06, Su-conm01, Su-decomp}.

\begin{theorem}[\cite{PS-cmh06}]
\label{thm:ps-decomp}
Let $\M$ be a matroid which is decomposable over $\k=\Z$ or $\k$ a field. Then the 
restriction of the map $\Pi\otimes \k$ to derived Lie algebras, 
$\Pi\otimes \k\colon \h'\otimes \k\to (\h^{\loc})'\otimes \k$, 
is an isomorphism. 
\end{theorem}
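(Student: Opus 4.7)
I would attack the statement by reducing it to a degree-wise injectivity claim and then bootstrapping upward from the iso already known in degrees $2$ and $3$. Since $\h$ and $\h^{\loc}$ are graded Lie algebras generated in degree $1$, we have $\h'\otimes\k=\bigoplus_{r\ge 2}\h_r\otimes\k$ and $(\h^{\loc})'\otimes\k=\bigoplus_{r\ge 2}\h^{\loc}_r\otimes\k$. By Proposition~\ref{prop:holo-loc}\ref{h1}, $\Pi\otimes\k$ is surjective in every degree $r\ge 2$, so the restriction to derived subalgebras is surjective. The problem thus reduces to showing $\Pi_r\otimes\k$ is injective for all $r\ge 2$, with $r=2$ and $r=3$ given by Proposition~\ref{prop:holo-loc}\ref{h1} and the decomposability hypothesis, respectively.

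For $r\ge 4$, I would proceed inductively and route the argument through the infinitesimal Alexander invariant. By Proposition~\ref{prop:holo-loc}\ref{h2}, the induced morphism $\overline{\Pi}\colon \B(\h)\otimes\k \surj \B(\h^{\loc})\otimes\k$ of $S$-modules is surjective. My claim is that $\overline{\Pi}$ is in fact an isomorphism. To see this I would use the Koszul-type presentation of $\B(\h)\otimes\k$: its generators sit in $S$-degree $0$ and are identified with $\h_2\otimes\k$, while its first syzygies sit in $S$-degree $1$ and are controlled by the kernel of the bracket map $\h_1\otimes\h_2\to\h_3$. Isomorphism of $\Pi$ in degrees $2$ and $3$ then matches both generators and syzygy data to those of $\B(\h^{\loc})\otimes\k$, making $\overline{\Pi}$ an iso in every $S$-degree, i.e., on $(\h'/\h'')_r$ for every $r\ge 2$.

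From iso on $\h'/\h''$, I would ascend to iso on $\h'$ by induction on $r$. At each stage, the functorial short exact sequence $0\to \h''_r \to \h_r \to (\h'/\h'')_r \to 0$ lets me apply the $5$-lemma once I know $\Pi$ is an iso on $\h''_r$. That in turn is controlled by the lower-degree pieces: $\h''_r$ is the image of the bracket $\bigoplus_{\substack{s+t=r\\ s,t\ge 2}}\h_s\otimes\h_t \to \h_r$, so surjectivity is automatic from Proposition~\ref{prop:holo-loc}\ref{h1}, and injectivity reduces (by a diagram chase involving the Jacobi kernel) to injectivity of $\Pi_s$ for $s<r$, which is the inductive hypothesis.

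The main obstacle is establishing the quadratic $S$-module presentation of $\B(\h)\otimes\k$, namely that its first syzygies are concentrated in $S$-degree $1$ and determined by the $L_{\le 2}(\M)$-data of $\h$ (essentially, by the quadratic relations of $\h$ together with the Jacobi identity). This is the point where the decomposability hypothesis is used essentially: it guarantees that the degree-$3$ bracket data in $\h$ match those of $\h^{\loc}$ under $\Pi$, so the presentations of $\B(\h)\otimes\k$ and $\B(\h^{\loc})\otimes\k$ align and $\overline{\Pi}$ is forced to be an isomorphism. The final passage from iso on $\B(\h)\otimes\k$ back up to iso on $\h'\otimes\k$ via the $5$-lemma chain is conceptually routine, but the bookkeeping with the derived filtration requires care.
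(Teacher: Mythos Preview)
Your reduction to degree-wise injectivity and your Step~2 (showing $\overline{\Pi}\colon \B(\h)\otimes\k\to\B(\h^{\loc})\otimes\k$ is an isomorphism via matching generators and linear syzygies) are on the right track and essentially what the argument in \cite{PS-cmh06} does. The problem is Step~3: the passage from ``$\overline{\Pi}$ is an isomorphism on $\h'/\h''$'' back up to ``$\Pi'$ is an isomorphism on $\h'$'' via the $5$-lemma induction is circular as you have written it.

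Here is why. You want $\Pi\vert_{\h''_r}$ to be injective, and you propose to deduce this from injectivity of $\Pi_s$ for $s<r$ by chasing the diagram with the bracket maps $\beta\colon\bigoplus_{s+t=r}\h_s\otimes\h_t\to\h''_r$ and $\beta^{\loc}$. But the left vertical in that square is an isomorphism by induction, and the right vertical is what you want; so injectivity on $\h''_r$ is equivalent to $\ker\beta$ and $\ker\beta^{\loc}$ having the same dimension, i.e., to $\dim\h''_r=\dim(\h^{\loc})''_r$. Using the short exact sequence and Step~2, this is equivalent to $\dim\h_r=\dim\h^{\loc}_r$, which is exactly the statement you are trying to prove. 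The ``Jacobi kernel'' only accounts for the universal relations in $\ker\beta$; the additional relations coming from the specific Lie structure of $\h$ versus $\h^{\loc}$ are precisely what is at stake, and nothing in your inductive hypothesis controls them.

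More conceptually: a surjection $\phi\colon\g\twoheadrightarrow\g^{\flat}$ of positively graded Lie algebras that induces an isomorphism on metabelianizations $\g'/\g''\cong(\g^{\flat})'/(\g^{\flat})''$ need \emph{not} be an isomorphism---take $\g^{\flat}=\g/J$ for any nonzero graded ideal $J\subset\g''$. So Step~2 alone cannot carry you to the conclusion without further input specific to holonomy Lie algebras. The argument in \cite{PS-cmh06} supplies that input: one uses the local inclusions $\iota_X\colon\h(\M_X)\hookrightarrow\h$ (which satisfy $\pi_X\circ\Pi\circ\iota_X=\id$) to exhibit an explicit linear section of $\Pi'$, and then shows---using the decomposability hypothesis in degree~$3$ together with the specific form of the holonomy relations---that $\sum_X\iota_X\bigl(\h(\M_X)'\bigr)$ is closed under bracketing with $\h_1$, hence equals all of $\h'$. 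Your outline does not invoke this structure, and without it the bootstrap does not close.
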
 
The proof given in \cite{PS-cmh06} in the realizable case works as well 
for arbitrary (simple) matroids. As an immediate application of this theorem, we 
have the following corollary.

\begin{corollary}
\label{cor:decomp}
Let $\M$ be a matroid.
\begin{enumerate}[label=(\arabic*)]
\item \label{dec1} 
If $\M$ is decomposable over $\Z$, then $\h(\M)$ is torsion-free.
\item  \label{dec2} 
If $\M$ is decomposable over $\Q$, then the inequalities from 
Corollary \ref{cor:holo-bound} hold as equalities for all $r\ge 2$.
\end{enumerate}
\end{corollary}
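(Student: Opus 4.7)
The plan is to deduce both parts as direct consequences of Theorem \ref{thm:ps-decomp}, together with the explicit structure of the local holonomy Lie algebra $\h^{\loc}$ as a product over rank-$2$ localizations.

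For part \ref{dec1}, I would first observe that $\h^{\loc}(\M)=\prod_{X\in L_2(\M)} \h(\M_X)$ is torsion-free as an abelian group: each factor corresponds to a uniform matroid $U_{2,\abs{X}}$, and Example \ref{ex:holo-U2n} identifies $\h(\M_X)$ with the free Lie algebra $\Lie(\Z^{\abs{X}-1})\times \Lie(\Z)$, whose graded pieces are free abelian. Now assume $\M$ is decomposable over $\Z$. Then Theorem \ref{thm:ps-decomp} (applied with $\k=\Z$) gives that $\Pi\colon \h'\to (\h^{\loc})'$ is an isomorphism of graded abelian groups, so $\h'$ is torsion-free. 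Since $\h$ is generated in degree $1$, we have $\h_1=V^{\vee}$ (free abelian) and $\h_r\subset \h'$ for $r\ge 2$, so every graded piece $\h_r$ is torsion-free, and therefore so is $\h(\M)$ as a whole.

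For part \ref{dec2}, the equality of ranks in \eqref{eq:phi-bound} is immediate: decomposability over $\Q$ gives, via Theorem \ref{thm:ps-decomp} with $\k=\Q$, an isomorphism $\h'\otimes \Q\isom (\h^{\loc})'\otimes \Q$ of graded $\Q$-vector spaces, so $\dim_{\Q} \h_r\otimes \Q=\dim_{\Q} (\h^{\loc})_r\otimes \Q$ for every $r\ge 2$, which reads precisely as $\phi_r(\M)=\sum_{X}\phi_r(\M_X)$. For the Chen ranks in \eqref{eq:theta-bound}, an isomorphism of derived Lie algebras over $\Q$ automatically induces an isomorphism of second derived Lie algebras over $\Q$ (since $\h''=[\h',\h']$ is determined by the bracket on $\h'$, and $\Pi$ is a morphism of Lie algebras). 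Passing to the quotient $\h'/\h''$ gives an isomorphism of graded $\Q$-vector spaces $\B(\h)\otimes \Q\isom \B(\h^{\loc})\otimes \Q$, and comparing Hilbert series via Proposition \ref{prop:inf Massey} yields $\theta_r(\M)=\sum_{X}\theta_r(\M_X)$ for all $r\ge 2$.

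There is no real obstacle here; the work has already been absorbed into Theorem \ref{thm:ps-decomp} and Corollary \ref{cor:holo-bound}. The only point requiring mild care is the passage from $\Pi\colon \h'\otimes \Q\isom (\h^{\loc})'\otimes \Q$ to an isomorphism of infinitesimal Alexander invariants, but this is formal once one notes that the second derived subalgebra is functorial in the Lie algebra and that $\Pi$ is surjective on $\h$ in each relevant degree (Proposition \ref{prop:holo-loc}\ref{h1}), so both target brackets are images of source brackets and the isomorphism descends to the quotients $\h'/\h''$.
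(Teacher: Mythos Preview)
Your proposal is correct and follows the same route the paper takes: the corollary is presented there as an immediate consequence of Theorem~\ref{thm:ps-decomp}, and you have simply spelled out the details (torsion-freeness of $\h^{\loc}$ via Example~\ref{ex:holo-U2n}, and the passage to $\h'/\h''$ via the fact that a Lie algebra isomorphism on $\h'\otimes\Q$ carries $\h''\otimes\Q$ bijectively onto $(\h^{\loc})''\otimes\Q$). The appeal to Proposition~\ref{prop:holo-loc}\ref{h1} in your final paragraph is unnecessary---once $\Pi\otimes\Q$ is a Lie algebra isomorphism on derived subalgebras, functoriality of the second derived subalgebra does the rest---but this is harmless redundancy, not a gap.
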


\section{Resonance varieties}
\label{sect:res}

The resonance varieties of a simple matroid $\M$ are algebro-geometric 
invariants that encode the structure of vanishing products in the Orlik--Solomon algebra 
$\OS(\M)$. Defined as loci in $\OS^1(\M)\otimes \k$ where the cohomology of certain cochain 
complexes jumps, the varieties $\RR^q_s(\M,\k)$ capture dependencies among circuits and flats. 
In this section, we construct them for graded commutative algebras, explore their combinatorial 
properties, including propagation, and specialize to matroids, revealing connections 
between degree-$1$ resonance varieties, multinets, and Koszul modules, 
while noting that their linearity in higher degrees 
may distinguish realizable from non-realizable matroids.

\subsection{The resonance varieties of a graded algebra}
\label{subsec:res cga}

We start by defining the resonance varieties in the more general context 
of graded algebras, as done in many works on the subject, including 
\cite{MS-aspm, PS-mrl, PS-crelle, Su-tcone, Su-edinb20, Su-conm23, 
AFRSS24, AFRS24, Su-pisa24}. 
Let $A$ be a graded, graded-commutative algebra (for short, 
a $\cga$) over a field $\k$ of characteristic different from $2$.  
Throughout, we will assume that 
\begin{itemize}[itemsep=2pt]
\item $A$ is non-negatively graded, i.e., $A=\bigoplus_{q\ge 0} A^q$. 
\item $A$ is of finite-type, i.e., each graded piece $A^q$ 
is finite-dimensional.
\item $A$ is connected, i.e., $A^0=\k$, generated by the unit $1$. 
\end{itemize} 
We will write $b_q=b_q(A)$ for the Betti numbers of $A$ 
(that is, $b_q = \dim_{\k} A^q$), and we will generally 
assume that $b_1>0$, so as to avoid trivialities. 

By graded-commutativity of the product and the assumption that $\ch \k\ne 2$, 
each element $a\in A^1$ squares to zero. (For a way to define resonance varieties in 
characteristic $2$ we refer to \cite{Su-conm23}.) 
We thus obtain a cochain complex (known in the context of OS-algebras 
as the {\em Aomoto complex}), 
\begin{equation}
\label{eq:aomoto}
\begin{tikzcd}
(A , \delta_a)\colon \quad
A^0 \arrow[r, "\delta^0_a"] & A^1 \arrow[r, "\delta^1_a"] & A^2 \arrow[r, "\delta^2_a"] & \cdots
\end{tikzcd}
\end{equation}
with differentials $\delta^q_a(u)=a\cdot u$, for all $u\in A^q$.    
The {\em resonance varieties}\/ of $A$ (in degree $q \ge 0$ 
and depth $s\ge 0$) are defined as 
\begin{equation}
\label{eq:rvs}
\RR^q_s(A)=\big\{a \in A^1 : \dim_{\k} H^q(A,\delta_a) \ge s\big\}. 
\end{equation}

In other words, the resonance varieties record the locus of points $a$ 
in the affine space $A^1=\k^{b_1}$ where the `twisted' Betti numbers 
$b_q(A,a)=\dim_{\k} H^q(A,\delta_a)$ jump by at least $s$.   
In particular, $\RR^q_0(A)=A^1$.   Here is another 
description of these sets, which follows directly from the definitions.

\begin{lemma}
\label{lem:res}
An element $a\in A^1$ belongs to $\RR^q_s(A)$ if and only if there exist 
$u_1,\dots ,u_s\in A^q$ such that $au_1=\cdots =au_s=0$ 
in $A^{q+1}$, and the set $\{au,u_1,\dots ,u_s\}$ 
is linearly independent in $A^q$, for all $u\in A^{q-1}$.
\end{lemma}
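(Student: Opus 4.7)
\medskip
\noindent\textbf{Proof plan.} The statement is essentially a direct unwinding of the definitions, so the plan is to carry out that translation carefully in both directions.

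First I would recall from \eqref{eq:rvs} that $a \in \RR^q_s(A)$ means $\dim_{\k} H^q(A, \delta_a) \ge s$. Writing $H^q(A, \delta_a) = \ker(\delta^q_a)/\im(\delta^{q-1}_a)$ and using $\delta^q_a(v) = av$, this inequality is equivalent to the existence of $u_1, \dots, u_s \in \ker(\delta^q_a)$---that is, elements satisfying $au_1 = \cdots = au_s = 0$---whose cohomology classes $[u_1], \dots, [u_s]$ are $\k$-linearly independent in $H^q(A, \delta_a)$. This recovers the first condition of the lemma.

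Next I would translate the independence of $[u_1], \dots, [u_s]$ in the quotient into a condition on elements of $A^q$. These classes are independent if and only if no non-trivial combination $c_1 u_1 + \cdots + c_s u_s$ lies in $\im(\delta^{q-1}_a) = a \cdot A^{q-1}$; equivalently, there is no $u \in A^{q-1}$ and no $(c_1, \dots, c_s) \ne 0$ with $c_1 u_1 + \cdots + c_s u_s = au$. This is precisely the assertion that $\{au, u_1, \dots, u_s\}$ is $\k$-linearly independent in $A^q$ for every $u \in A^{q-1}$: any non-trivial dependence $c_0(au) + c_1 u_1 + \cdots + c_s u_s = 0$ with the $c_i$ for $i\ge 1$ not all zero would express a non-trivial combination of the $u_i$ as an element of $\im(\delta^{q-1}_a)$, contradicting independence of the classes. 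Running the argument in reverse yields the converse implication.

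The expected obstacle is essentially nil: the lemma is a tautological reformulation once one identifies the image and kernel of the Aomoto differential. The only mild subtlety is to note that ``$\sum c_i u_i \in \im(\delta^{q-1}_a)$'' literally means ``$\sum c_i u_i = au$ for some $u \in A^{q-1}$'', at which point the elementwise condition in the lemma becomes equivalent to the cohomological one.
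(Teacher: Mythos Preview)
Your proposal is correct and matches the paper's approach: the paper gives no proof beyond the remark that the lemma ``follows directly from the definitions,'' and your argument is precisely that direct unwinding of the definition of $H^q(A,\delta_a)$ in terms of kernels and images. There is nothing to add.
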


Consequently, $\RR^q_{b_q}(A)=\{0\}$ and 
$\RR^q_s(A)= \emptyset$ for $s>b_q$; in particular, 
$\RR^0_1(A) = \{ 0\}$ and $\RR^0_s(A) = \emptyset$ for $s>1$,
since $A$ is connected. Thus, for each $q \ge 0$, 
we have a descending filtration, 
\begin{equation}
\label{eq:resfilt}
A^1=\RR^q_0(A)\supseteq \RR^q_1(A)\supseteq \cdots 
\supseteq \RR^q_{b_q} (A)=\{0\}\supset \RR^q_{b_{q+1}} (A) =\emptyset.
\end{equation}
It follows that $b_q(A)=\max\{ s : 0\in \RR^q_s(A)\}$. 

Clearly, if $a$ is a non-zero element in $A^1$, then $a\in \RR^q_s(A)$ if and only if 
$\lambda a\in \RR^q_s(A)$ for all $\lambda\in\k^{*}$. Thus, the resonance 
varieties determine projective subvarieties $\P(\RR^q_s(A))$ of $\P (A^1)$.

For simplicity, we will oftentimes denote by $\RR^q(A)=\RR^q_1(A)$ 
the depth-$1$ resonance varieties of $A$. 
The most studied is the one in degree-$1$, 
which can be described as the set
\begin{equation}
\label{eq:res1}
\RR^1(A) =\big\{ a \in A^1 : \text{$\exists\, b \in A^1, \
0\neq a\wedge b \in K^{\perp} $}\big\} \cup \{0\},
\end{equation}
where $K^{\perp}$ denotes the kernel of the multiplication map $
A^1\wedge A^1 \to A^2$.

We say that a linear subspace $U\subset A^1$ is {\em isotropic}\/ 
if the restriction of the multiplication map 
$A^1\wedge A^1\to A^2$ to $U\wedge U$ is the zero map. 
If $U$ is an isotropic subspace of dimension $s>0$, then 
$U\subseteq \RR^1_{s-1}(A)$. Moreover, $\RR^1(A)$ is the union 
of all isotropic planes in $A^1$. 

\subsection{Properties of resonance}
\label{subset:natural}

The resonance varieties enjoy several pleasant properties. 
To start with, let $\varphi\colon A\to B$ be a morphism of graded $\k$-algebras. 
Then, for each $a\in A^1$, there is an induced homomorphism, 
$\varphi_a\colon H^{*}(A,\delta_a)\to H^{*}(B,\delta_{\varphi(a)})$, 
which sends a cohomology class $[b]$ represented by 
an element $b\in A^q$ such that $ab=0\in A^{q+1}$ to $[\varphi(b)]$.

\begin{proposition}[ \cite{Su-edinb20, Su-conm23}]
\label{prop:functres}
Let $\varphi\colon A\to B$ be a morphism of graded 
algebras such that $\varphi^i\colon A^q\to B^q$ is injective and 
$\varphi^{q-1}$ is surjective, for some $q\ge 1$. Then 
the homomorphisms $\varphi^i_a\colon H^{q}(A,\delta_a)\to 
H^{q}(B,\delta_{\varphi(a)})$ are injective, for all $a\in A^1$.
If, moreover, the map $\varphi^1\colon A^1\to B^1$ is 
injective.  Then this map restricts to inclusions 
$\varphi^1\colon \RR^q_s(A)\inj  \RR^q_s(B)$, for all $s\ge 0$.
\end{proposition}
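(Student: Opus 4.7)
The plan is to verify that $\varphi$ induces a chain map of Aomoto complexes, then perform a short diagram chase to deduce injectivity on cohomology, and finally read off the resonance inclusion from the definition.

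First I would observe that, because $\varphi$ is a morphism of graded algebras, for each $a \in A^1$ one has
\[
\varphi(\delta_a u) = \varphi(a \cdot u) = \varphi(a) \cdot \varphi(u) = \delta_{\varphi(a)} \varphi(u)
\]
for all $u \in A^q$ and all $q \ge 0$. Hence $\varphi$ is a map of cochain complexes $(A,\delta_a) \to (B,\delta_{\varphi(a)})$, and so induces a homomorphism $\varphi^q_a \colon H^q(A,\delta_a) \to H^q(B,\delta_{\varphi(a)})$ defined by $[u] \mapsto [\varphi(u)]$ for cocycles $u \in A^q$.

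Next I would establish the injectivity of $\varphi^q_a$ under the hypotheses that $\varphi^q$ is injective and $\varphi^{q-1}$ is surjective. Suppose $[u] \in H^q(A,\delta_a)$ lies in $\ker(\varphi^q_a)$, so $au = 0$ in $A^{q+1}$ and $\varphi(u) = \varphi(a)\cdot w$ for some $w \in B^{q-1}$. By surjectivity of $\varphi^{q-1}$, choose $w' \in A^{q-1}$ with $\varphi(w') = w$. Then
\[
\varphi(u - a w') = \varphi(u) - \varphi(a)\cdot \varphi(w') = 0,
\]
and injectivity of $\varphi^q$ forces $u = a w' = \delta_a w'$, so $[u] = 0$ in $H^q(A,\delta_a)$. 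This gives the first assertion.

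For the second assertion, suppose additionally that $\varphi^1$ is injective, and let $a \in \RR^q_s(A)$, i.e.\ $\dim_\k H^q(A,\delta_a) \ge s$. By the injectivity of $\varphi^q_a$ just proved, $\dim_\k H^q(B,\delta_{\varphi(a)}) \ge s$, hence $\varphi^1(a) = \varphi(a) \in \RR^q_s(B)$. Since $\varphi^1$ is injective, the restriction $\varphi^1 \colon \RR^q_s(A) \to \RR^q_s(B)$ is an inclusion, as required. There is no real obstacle here: the only subtlety is keeping the roles of the three hypotheses straight — injectivity of $\varphi^q$ kills the cocycle after lifting, surjectivity of $\varphi^{q-1}$ produces the lift, and injectivity of $\varphi^1$ upgrades the map on $A^1$ to an honest inclusion of subsets of $B^1$.
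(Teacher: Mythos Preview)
Your proof is correct and is exactly the standard diagram chase one would expect. The paper does not actually give a proof of this proposition; it merely cites \cite{Su-edinb20, Su-conm23}, where the argument is the same as yours: verify that $\varphi$ intertwines the Aomoto differentials, lift a coboundary witness via surjectivity of $\varphi^{q-1}$, and kill the resulting cocycle via injectivity of $\varphi^{q}$, then read off the resonance inclusion from the definition.
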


\begin{corollary}
\label{cor:functorial resonance}
Let $\varphi\colon A\to B$ be a morphism of connected $\cga$s.  
If the map  $\varphi^1\colon A^1 \to B^1$ is injective, then 
$\varphi^1(\RR^1_s(A))\subseteq  \RR^1_s(B)$, for all $s\ge 0$.
\end{corollary}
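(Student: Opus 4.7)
The plan is to deduce this corollary as an immediate specialization of Proposition \ref{prop:functres} to the case $q=1$. First, I would check the two hypotheses the proposition requires at this degree: injectivity of $\varphi^1\colon A^1\to B^1$, and surjectivity of $\varphi^0\colon A^0\to B^0$.

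Injectivity of $\varphi^1$ is precisely the standing hypothesis of the corollary, so nothing needs to be done there. For the degree-zero surjectivity, I would invoke the connectedness assumption on both $\cga$s: since $A^0=\k=B^0$ and $\varphi$ is a morphism of unital $\k$-algebras, $\varphi^0$ must send $1_A$ to $1_B$, so it coincides with the identity map on $\k$ and is in particular surjective. With both hypotheses in place, the second assertion of Proposition \ref{prop:functres} directly yields $\varphi^1(\RR^1_s(A))\subseteq \RR^1_s(B)$ for every $s\ge 0$, which is exactly the conclusion sought.

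The main (and essentially only) thing to notice is that the connectedness assumption on $A$ and $B$ silently supplies the degree-zero surjectivity hypothesis of the proposition, so no extra work or hypothesis is needed. There is no real obstacle to overcome beyond unpacking the statement of the preceding proposition at $q=1$; the result is a structural consequence of the functoriality already established there.
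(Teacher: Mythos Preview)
Your proposal is correct and follows exactly the paper's implicit approach: the corollary is stated immediately after Proposition~\ref{prop:functres} with no proof, signaling that it is the $q=1$ specialization, with connectedness supplying the surjectivity of $\varphi^0$. Your unpacking of that step is accurate and complete.
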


In general, though, even if $\varphi\colon A\to B$ is  
an injective morphism between two graded algebras, the set 
$\varphi^1(\RR^q_s(A))$ may not be included in $\RR^q_s(B)$, 
for some $q>1$ and $s>0$.

One of the more useful properties of resonance varieties 
is the way they behave with respect to tensor products of graded algebras. 

\begin{proposition}[\cite{PS-plms17, SW-mz17, Su-edinb20, Su-conm23}]
\label{prop:resprod}
Let $A=B \otimes_{\k} C$ be the tensor product of two connected, 
finite-type graded $\k$-algebras. Then 
\begin{align*}
\RR^1_k(B \otimes_\k C)&=\RR^1_k(B)\times \{0\} \cup \{0\}\times \RR^1_k(C),\\
\RR^q(B \otimes_\k C)&=\bigcup\limits_{p\ge 0} \RR^p(B)\times  \RR^{q-p}(C), 
 \quad \textrm{ if } q\ge 2.
\end{align*}
\end{proposition}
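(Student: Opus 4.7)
The plan is to identify the Aomoto complex of $A = B \otimes_\k C$ at a point $a \in A^1$ as the tensor product of the Aomoto complexes of $B$ and $C$ at the projections of $a$, and then to extract both formulas from the Künneth theorem. Concretely, I would decompose an arbitrary $a \in A^1 = B^1 \oplus C^1$ as $a = b \otimes 1 + 1 \otimes c$ with $b \in B^1$ and $c \in C^1$. Using the Koszul sign rule for multiplication in the graded-commutative tensor product, I would verify that on $B^p \otimes C^r$,
\[
\delta_a(u \otimes v) = (b \cdot u) \otimes v + (-1)^p\, u \otimes (c \cdot v),
\]
which is precisely the differential of the tensor-product cochain complex $(B, \delta_b) \otimes_\k (C, \delta_c)$. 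Over the field $\k$, the Künneth formula then yields the isomorphism
\[
H^q(A, \delta_a) \cong \bigoplus_{p+r=q} H^p(B, \delta_b) \otimes_\k H^r(C, \delta_c).
\]

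For the higher-degree formula, I would observe that for $q \ge 2$ the space $H^q(A, \delta_a)$ is nonzero iff some Künneth summand is nonzero, iff there exist $p, r$ with $p + r = q$ and both factors nonzero, iff $b \in \RR^p(B)$ and $c \in \RR^{q-p}(C)$ for some $p \ge 0$. Here I use the convention $\RR^0(B) = \RR^0(C) = \{0\}$, which is consistent with the fact that $H^0(B, \delta_b) \ne 0$ iff $b = 0$. Taking the union over $p$ then delivers the second formula.

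For the degree-$1$, depth-$k$ formula, I would exploit the fact that $\dim H^0(B, \delta_b)$ equals $1$ if $b = 0$ and $0$ otherwise, since $H^0(B, \delta_b)$ is the kernel of the map $\k \to B^1$ sending $\lambda \mapsto \lambda b$. Expanding Künneth in degree $1$ gives
\[
\dim H^1(A, \delta_a) = \dim H^1(B, \delta_b) \cdot \dim H^0(C, \delta_c) + \dim H^0(B, \delta_b) \cdot \dim H^1(C, \delta_c).
\]
A case analysis on whether $b$ and $c$ vanish shows that this expression is zero when both are nonzero, equals $\dim H^1(B, \delta_b)$ when $c = 0$, and equals $\dim H^1(C, \delta_c)$ when $b = 0$. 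Hence membership in $\RR^1_k(A)$ for a nonzero $a$ forces exactly one of $b, c$ to vanish, with the remaining component lying in the corresponding $\RR^1_k$ of $B$ or $C$.

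The main obstacle is the very first step: verifying that $\delta_a$ on $A = B \otimes_\k C$ is genuinely the tensor-product differential requires a careful application of the Koszul sign rule in the graded-commutative setting, together with the identification $a = b \otimes 1 + 1 \otimes c$. Once this identification is established, both formulas follow formally by unpacking the Künneth decomposition, with the only real subtlety being the depth bookkeeping in degree $1$ through the $H^0$-terms.
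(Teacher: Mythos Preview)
The paper itself gives no proof of this proposition; it is quoted from \cite{PS-plms17, SW-mz17, Su-edinb20, Su-conm23} as a known result in a survey. Your approach---identifying $(A,\delta_a)$ with the tensor product $(B,\delta_b)\otimes_\k (C,\delta_c)$ via the Koszul sign rule and then invoking K\"unneth over the field $\k$---is exactly the standard argument used in those references, so there is no alternative route to compare against.

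One small point worth flagging: your degree-$1$ case analysis explicitly treats only nonzero $a$. The origin needs a separate check, and in fact the first formula as literally written can fail there: when $\max(b_1(B),b_1(C)) < k \le b_1(B)+b_1(C)$ one has $0\in\RR^1_k(A)$ (since $\dim H^1(A,0)=b_1(B)+b_1(C)$) while $0\notin \RR^1_k(B)\times\{0\}\cup\{0\}\times\RR^1_k(C)$. This is a harmless subtlety in the statement (both sides are homogeneous and one cares about positive-dimensional components), not a defect in your method. The second formula has no such issue at $0$, since $b_q(A)=\sum_p b_p(B)\,b_{q-p}(C)$ is nonzero precisely when some summand is.
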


Let us note that the resonance varieties of a graded algebra $A$ 
do not depend in an essential way on the field $\k$, but rather, just on its 
characteristic. Indeed, if $\k\subset \K$ is a field extension, then the $\k$-points 
on $\RR^q_s(A\otimes_\k \K)$ coincide with $\RR^q_s(A)$.  Thus, there is 
not much loss of generality in assuming that $\k$ is algebraically closed. 

\subsection{Equations for the resonance varieties}
\label{subsec:eqresvar}
Once again, let $A$ be a connected, finite-type $\cga$ 
over a field $\k$.  Without essential loss of generality, we will 
assume that $n\coloneqq b_1(A)$ is at least $1$. Let us pick a basis 
$\{ e_1,\dots, e_n \}$ for the $\k$-vector space 
$A^1$, and let $\{ x_1,\dots, x_n \}$ be the Kronecker 
dual basis for the dual vector space $A_1=(A^1)^{\vee}$.  
These choices allow us to identify the symmetric algebra $\Sym(A_1)$ 
with the polynomial ring $S=\k[x_1,\dots, x_n]$. 

The well-known Bernstein--Bernstein--Gelfand correspondence (see e.g.~\cite{EPY03}) 
yields a cochain complex of finitely generated, free $S$-modules, 
\begin{equation}
\label{eq:univ aomoto}
\begin{tikzcd}[column sep=20pt]
\mathbf{L}(A)=(A\otimes_{\k} S,\delta_A)\colon \ 
\cdots \arrow[r]
& A^{q-1}\otimes_{\k} S \arrow[r, "\delta^{q-1}_A"]
& A^{q} \otimes_{\k} S \arrow[r, "\delta^{q}_A"]
& A^{q+1} \otimes_{\k} S \arrow[r]
& \cdots ,
\end{tikzcd}
\end{equation}
with differentials given by $\delta^{q}_A(u \otimes 1)= \sum_{i=1}^{n} 
e_i u \otimes x_i$ for $u\in A^{q}$. By construction, the matrices associated 
to these differentials have entries that are linear forms in the variables of $S$.    

It is readily verified that the evaluation of the cochain complex $\mathbf{L}(A)$ 
at an element $a\in A^1$ coincides with the cochain complex $(A,\delta_a)$ 
from \eqref{eq:aomoto}, that is to say, $\left.\delta^q_A\right|_{x_i=a_i} = \delta^q_a$. 
By definition, an element $a \in A^1$ belongs to $\RR^q_k(A)$ 
if and only if 
\begin{equation}
\label{eq:rank delta}
\rank \delta^{q-1}_a + \rank \delta^{q}_a \le b_q(A) - s.
\end{equation}
Let $J_d(\psi)$ denote the ideal of $d\times d$ minors 
of a $p\times r$ matrix $\psi$ with entries in $S$, with 
the convention that $J_0(\psi)=S$ and $J_d(\psi)=0$ if 
$d>\min(p,r)$.  Using the standard fact that 
$J_d (\phi\oplus \psi)= \sum_{i+k=d} J_{i}(\phi ) \cdot  J_{k}(\psi)$,
we conclude that 
\begin{equation}
\label{eq:rika}
\RR^q_s(A)= V \Big( J_{b_{q}(A)-s+1} \big(\delta^{q-1}_A\oplus \delta^{q}_A\big) \Big) .
\end{equation}

The {\em resonance schemes}\/ 
$\bR^q_s(A)$ are defined by the ideals 
$J_{b_{q}(A)-s+1} \big(\delta^{q-1}_A\oplus \delta^{q}_A\big)$ 
from above and have as underlying 
sets the resonance varieties $\RR^q_s(A)$. 
The degree $1$ resonance varieties admit an even simpler description.  
Clearly, the map $\delta^0_A\colon S\to S^n$ has matrix $\big( x_1 \cdots x_n\big)$, 
and so $V\big(J_1(\delta^{0}_A)\big)=\{0\}$. It follows that 
$\RR^1_s(A)=  V ( J_{n-s} (\delta^1_A) )$ 
for $0\le s<n$ and $\RR^1_n(A)=\{0\}$. 

\subsection{Koszul modules and their support loci}
\label{subsec:kozul-res}

Let $A_i\coloneqq (A^i)^{\vee}$ and $\partial^A_i\coloneqq (\delta_A^{i-1})^{\vee}$ 
be the respective $\k$-duals, 
and consider the chain complex of finitely generated $S$-modules dual to the one 
from \eqref{eq:univ aomoto},
\begin{equation}
\label{eq:a-tensor-s}
\begin{tikzcd}[column sep=24pt]
\bigl(A_{*} \otimes_{\k} S,\partial\bigr)\colon
 \cdots \ar[r]
&A_{q+1}\otimes_{\k} S \ar[r, "\partial_{q+1}^A"]
&A_{q} \otimes_{\k} S \ar[r, "\partial_{q}^A"]
&A_{q-1} \otimes_{\k} S \ar[r]
& \cdots.
\end{tikzcd}
\end{equation}
Following \cite{PS-mrl, Su-tcone, AFRSS24}, 
we define the {\em Koszul modules}\/  (in degrees $q \ge 0$) of the graded 
algebra $A$ as the homology $S$-modules of this chain complex, that is,
\begin{equation}
\label{eq:wi-a}
W_q(A) \coloneqq H_q\bigl(A_{*}\otimes_{\k} S\bigr) .
\end{equation}

It is clear that the Koszul modules are finitely generated, graded $S$-modules 
and that $W_0(A)=\k$ is the trivial $S$-module.
Setting $E_{*}\coloneqq \bwedge A_1$, the first Koszul module admits
the presentation
\begin{equation}
\label{eq:pres-w1a}
\begin{tikzcd}[column sep=18pt]
\big(E_3 \oplus K \big)  \otimes_{\k}  S
\ar[r, "\partial_3^E +\iota  \otimes_{\k} \id_S"] &[32pt]
E_2  \otimes_{\k}  S \ar[r, two heads] & W_1(A),
\end{tikzcd}
\end{equation}
where
$\begin{tikzcd}[column sep=18pt]
\hspace*{-5pt} K=\bigl\{\varphi \in A_1\wedge A_1=(A^1\wedge A^1)^{\vee} : 
\varphi_{|K^{\perp}}\equiv 0\bigr\}  \ar[r, hook, "\iota"] & A_1\wedge A_1 =E_2
\end{tikzcd}$. 

The {\em resonance schemes}\/ (in degree $q$ and depth $s$) of the $\cga$ 
$A$ are defined by the annihilator ideals of the exterior powers of the 
Koszul modules of $A$,
\begin{equation}
\label{eq:ria-spec}
\bR_{q,s}(A) \coloneqq \Spec \bigl(S/ \ann \big(\bwedge^s W_q(A)\big)\bigr).
\end{equation}
We will denote by $\RR_{q,s}(A)=\supp\big( \bwedge^s W_q(A)\big)$ 
the underlying sets, and call them the {\em support resonance loci} of $A$. For simplicity,  
we will write $\RR_{q}(A)=\RR_{q,1}(A)$ for the depth-$1$ support loci, and $\bR_{q}(A)$ 
for the corresponding schemes. Clearly $\RR_0(A)=\RR^0(A)=\{0\}$. 

More generally, suppose $W_j(A)\neq 0$ for all $1\le j\le q$; 
then, as shown in \cite{PS-mrl}, the degree-$1$ support resonance loci
are related to the degree-$1$ jump resonance loci by the equality
\begin{equation}
\label{eq:union-res}
\bigcup_{j\le q} \RR_j(A) = \bigcup_{j\le q} \RR^j(A).
\end{equation}
In particular, if $W_1(A)\ne 0$, then $\RR_1(A) =\RR^1(A)$.

\subsection{Resonance varieties of matroids}
\label{subsec:res-OS}

Let $\M$ be a rank-$\ell$ simple matroid on ground set $[n]$ 
and let $A=A(\M)$ be its Orlik--Solomon algebra. Recall that $A=E/I$, 
where $E=\bwedge V$ is the exterior algebra 
on the free abelian group $V$ with basis $\{e_1,\dots , e_n\}$ and $I$ 
is the ideal generated by $\big\{ \partial(e_S) : \text{$S \subseteq [n]$ 
is a circuit of $\M$} \big\}$. 

Fix a ground field $\k$ and consider the Orlik--Solomon algebra over this field, 
$\OS_{\k}(\M)=\OS(\M)\otimes \k$. Clearly, this is a connected, finite-type $\k$-$\cga$. 
The resonance varieties of the matroid $\M$ (over $\k$) are defined as the resonance 
varieties of its OS-algebra over $\k$, 
\begin{equation}
\label{eq:res-os}
\RR^q_s(\M,\k) = \RR^q_s(\OS_{\k}(\M) ) . 
\end{equation}
Since the integral OS-algebra $A$ is torsion-free, we have that $a^2 = 0$ in 
$A^2$ for every $a\in A^1$, and so the definition makes sense even when $\ch(\k)=2$. 
By construction, the sets $\RR^q_s(\M,\k)$ are homogeneous subvarieties 
of the affine space $V_{\k}\coloneqq V\otimes \k =\OS^1_{\k}(\M)$ and 
$\RR^q_s(\M,\k)\subseteq \RR^q_1(\M,\k)$, for all $s\ge 1$.  
We will often abbreviate $\RR^q(\M,\k)\coloneqq \RR^q_1(\M,\k)$. 

Suppose a matroid decomposes as $\M=\M_1\oplus \M_2$. Then, as noted 
in \S\ref{subsec:OS}, its OS-algebra decomposes as 
$\OS(\M)\cong \OS(\M_1)\otimes \OS(\M_2)$, 
and thus, by Proposition \ref{prop:resprod}, 
$\RR^k(\M,\k)=\bigcup_{p+q=k}\RR^p(\M_1,\k)\times \RR^q(\M_2,\k)$. 

The next theorem puts together several structural results about the resonance varieties  
of matroids, extracted from the works of Yuzvinsky \cite{Yu95}, Eisenbud--Popescu--Yuzvinsky 
\cite{EPY03}, Budur \cite{Bu11}, and Denham \cite{De16}. In the original references, 
the results in items \ref{r1}--\ref{r3} are stated only in the case when $\M$ is realizable, 
but the proofs are homological in nature, and do not rely on realizability or geometric 
input (this is also noted in \cite{De16} for items \ref{r1} and \ref{r2}).  
Moreover, the result from item \ref{r2} is stated only for $\k=\C$, but once again 
the proof works for arbitrary fields. 

\begin{theorem}
\label{thm:res-matroids}
Let $\M$ be a rank-$\ell$ simple matroid on $n$ points, and let 
$\k$ be a field. Then,
\begin{enumerate}[label=(\arabic*)] 
\item \label{r1}
\cite{Yu95} 
The resonance varieties $\RR^q_s(\M,\k)$  lie inside the hyperplane 
$\overline{V}_{\k}=\big\{ x\in V_{\k} : \sum_{i=1}^n x_i=0\big\}$. 

\item  \label{r2}
\cite{EPY03}  
The depth-$1$ resonance varieties {\em propagate}, that is, 
$\RR^q_1(\M,\k) \subseteq \RR^{q+1}_1(\M,\k)$ for $0\le q \le \ell-1$. 

\item  \label{r3}
\cite{Bu11} 
More specifically,  
$\RR^q_1(\M,\k)\subseteqq \RR^{q+1}_2 (\M,\k)$ 
for $q\le \ell-2$ and $\RR^q_1(\M,\k)\subseteqq \RR^{q+1}_{s} (\M,\k)$ 
for $q< \ell-2$ and $s\le 1+ \frac{\ell-3}{q+1}$.

\item  \label{r4}
\cite{Yu95} 
If $\M$ is connected, then 
$\RR^{\ell-1}_1(A_{\k}(\M))  = \RR^{\ell}_1(A_{\k}(\M)) = \overline{V}_{\k}$. 

\item  \label{r5}
\cite{De16} 
If $\ch(\k)\nmid n$, then $\RR^q_1(\overline{A}_{\k}(\M)) = 
\RR^q_1(A_{\k}(\M))$ for $0\le q\le \ell-1$.

\end{enumerate}
\end{theorem}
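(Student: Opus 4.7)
The plan is to treat the five items separately, in each case following the argument of the cited author and emphasizing that the proof depends only on the combinatorics of $L(\M)$ and the homological algebra of $\OS_{\k}(\M)$, never on a geometric realization. For \ref{r1}, I would build an explicit chain contraction from the degree $-1$ derivation $\partial_A \colon \OS(\M) \to \OS(\M)$ introduced in \S\ref{subsec:OS}: given $a = \sum_{u \in \cE} a_u e_u \in A^1_{\k}$, the graded Leibniz rule yields the identity $\partial_A \delta_a + \delta_a \partial_A = \big(\sum_{u} a_u\big) \id_{A_{\k}}$ on every $A^q_{\k}$. If $\sum_u a_u \ne 0$ in $\k$, then $(\sum_u a_u)^{-1} \partial_A$ is a contracting homotopy, so $H^*(A_{\k}, \delta_a) = 0$ and $a$ lies in none of the $\RR^q_s(\M,\k)$ for $s \ge 1$; hence every resonance variety is contained in $\overline{V}_{\k}$.

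For \ref{r2} and \ref{r3}, I would follow the BGG-based argument of EPY and its refinement by Budur. The key input is the fact, proved in \cite{EPY03}, that $\OS_{\k}(\M)$ admits a free resolution over the exterior algebra whose universal Aomoto transform $\mathbf{L}(A_{\k})$ from \eqref{eq:univ aomoto} is ``Cohen--Macaulay'' in the relevant range. Applied to the determinantal presentation \eqref{eq:rika}, this says that a non-vanishing cohomology class in degree $q$ can be multiplied by a well-chosen linear form to produce a non-vanishing class in degree $q+1$, giving \ref{r2}; Budur's refinement produces several independent such classes, establishing the sharper depth bounds in \ref{r3}. Every step takes place at the level of $A_{\k}$ and $\mathbf{L}(A_{\k})$, so transfers verbatim to non-realizable matroids.

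For \ref{r4}, by \ref{r1} and \ref{r2} it suffices to show $\RR^{\ell-1}_1(\M,\k) \supseteq \overline{V}_{\k}$. Fixing a non-zero $a \in \overline{V}_{\k}$, one exhibits directly a non-trivial class in $H^{\ell-1}(A_{\k},\delta_a)$ by an explicit cocycle construction at the top of the flag complex of $L(\M)$; the connectedness of $\M$ is precisely what prevents this cocycle from being a coboundary, exactly as in \cite{Yu95}. For \ref{r5}, when $\ch(\k) \nmid n$ the element $\omega \coloneqq n^{-1}\sum_{u\in\cE} e_u$ gives a splitting $V_{\k} = \overline{V}_{\k} \oplus \k \omega$ that extends to a decomposition $A_{\k} \cong \overline{A}_{\k} \oplus \omega \cdot \overline{A}_{\k}$ of $\overline{A}_{\k}$-modules. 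For $a \in \overline{V}_{\k}$, the Aomoto differential $\delta_a$ preserves both summands, and matching the resulting cohomologies in degrees $\le \ell - 1$ yields the claimed coincidence of resonance loci.

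The main obstacle is \ref{r2}/\ref{r3}: these rest on a genuine piece of homological algebra, namely the exactness of the universal Aomoto complex $\mathbf{L}(A_{\k})$ established in \cite{EPY03} via the Koszul-duality-like structure of the OS-algebra. Justifying that property without recourse to geometric or topological input is the only place where substantial work is required; once it is in hand, the other four items reduce to direct calculations or to adaptations of standard lattice-theoretic arguments.
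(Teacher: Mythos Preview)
The paper does not supply its own proof of this theorem: it is stated as a compilation of results from \cite{Yu95, EPY03, Bu11, De16}, with only the remark (in the paragraph preceding the statement) that the cited arguments are homological and do not use realizability. Your proposal therefore goes well beyond what the paper does, by actually sketching the arguments behind each citation; the sketches for \ref{r1} and \ref{r4} are correct and match the standard proofs.

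Two points deserve tightening. For \ref{r2}--\ref{r3}, the phrase ``Cohen--Macaulay'' is not the right descriptor: the key input from \cite{EPY03} is that $A_\k$ admits a \emph{linear} free resolution over the exterior algebra, which via BGG translates into the exactness properties of $\mathbf{L}(A_\k)$ that drive propagation and Budur's depth bounds. For \ref{r5}, your claim that $\delta_a$ preserves the decomposition $A_\k = \overline{A}_\k \oplus \omega\cdot\overline{A}_\k$ for $a \in \overline{V}_\k$ is correct and yields $H^q(A_\k,\delta_a) \cong H^q(\overline{A}_\k,\delta_a) \oplus H^{q-1}(\overline{A}_\k,\delta_a)$; but this gives $\RR^q_1(A_\k) = \RR^q_1(\overline{A}_\k) \cup \RR^{q-1}_1(\overline{A}_\k)$, and collapsing the union to $\RR^q_1(\overline{A}_\k)$ for $q\le \ell-1$ requires propagation for $\overline{A}_\k$, which you have not invoked. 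This is exactly how Denham argues in \cite{De16}, so the missing step is real but easily supplied once \ref{r2} (for $\overline{A}_\k$) is in hand.
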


As an immediate consequence of this theorem, we obtain the following corollary.

\begin{corollary}
\label{cor:res-mat}
Let $\M$ be a simple, connected, rank-$\ell$ matroid on $n$ points. Then,  
for any field $\k$, 
\begin{equation}
\label{eq:propagate}
\{0\}= \RR^0_1(\M,\k)\subseteq \RR^1_1(\M,\k)\subseteq\cdots\subseteq 
\RR^{\ell-1}_1(\M,\k) =\RR^{\ell}_1(\M,\k)= \overline{V}_{\k}. 
\end{equation}
\end{corollary}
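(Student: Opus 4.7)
The plan is to read off the corollary directly from Theorem \ref{thm:res-matroids}, by chaining together the propagation statement with the identification of the top two resonance varieties. Very little actual work is needed, so I would present the proof as a short assembly of already-stated facts.

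First, I would handle the left endpoint. Since $A=\OS_{\k}(\M)$ is connected ($A^0=\k$), the general observation from \S\ref{subsec:res cga} that $\RR^0_1(A)=\{0\}$ and $\RR^0_s(A)=\emptyset$ for $s>1$ gives $\RR^0_1(\M,\k)=\{0\}$. Next, I would apply part \ref{r2} of Theorem \ref{thm:res-matroids}: the inclusions
\[
\RR^0_1(\M,\k)\subseteq \RR^1_1(\M,\k)\subseteq \cdots \subseteq \RR^{\ell-1}_1(\M,\k)
\]
follow term by term from propagation $\RR^q_1(\M,\k)\subseteq \RR^{q+1}_1(\M,\k)$, which holds in the stated range $0\le q\le \ell-1$. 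This step uses nothing beyond quoting the theorem.

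For the right endpoint, I would appeal to part \ref{r4} of Theorem \ref{thm:res-matroids}, which is precisely the assertion that when $\M$ is connected, the top two depth-$1$ resonance varieties coincide with the hyperplane $\overline{V}_{\k}$, so
\[
\RR^{\ell-1}_1(\M,\k)=\RR^{\ell}_1(\M,\k)=\overline{V}_{\k}.
\]
Combining the two displays produces the full chain claimed in the corollary.

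There is no genuine obstacle, since each ingredient has been stated in the preceding theorem; the only minor care is in matching the indexing conventions (using $A^0=\k$ for the lower endpoint and the hypothesis that $\M$ is connected for the upper one), and in observing that propagation is precisely what makes the chain monotone. I would therefore keep the proof to essentially two or three sentences, with references to parts \ref{r2} and \ref{r4} of Theorem \ref{thm:res-matroids} and a brief reminder of the connectedness of $A$ for $\RR^0_1(\M,\k)=\{0\}$.
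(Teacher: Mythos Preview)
Your proposal is correct and matches the paper's approach exactly: the paper presents this corollary as an ``immediate consequence'' of Theorem~\ref{thm:res-matroids} without further proof, and your assembly of parts~\ref{r2} and~\ref{r4} together with the general fact $\RR^0_1(A)=\{0\}$ for connected algebras is precisely the intended unpacking.
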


\begin{example}
\label{ex:res-unif}
Let $\M=U_{n,n}$ be the free matroid ground set $[n]$; then 
$\OS(\M)=E$ and so $\RR^q_s(\M,\k)$ is equal to $\{0\}$ if $0\le s\le \binom{n}{q}$ 
and is empty otherwise. On the other hand, if $\M=U_{\ell,n}$ is the uniform rank-$\ell$ 
matroid on $n$ points, and if $\ell < n$, then $\RR^q_1(\M,\k)=\{0\}$ for $0\le q\le \ell-1$ and 
$\RR^{\ell-1}_1(\M,\k)=\overline{V}_{\k}$, since $\M$ is connected.
\end{example}

As we shall see in \S\ref{sect:hyp-arr}, all irreducible components of the 
resonance varieties $\RR^q_s(\M,\k)$ are linear subspaces of $ \overline{V}_{\k}$, 
provided the matroid $\M$ is realizable by a hyperplane arrangement and 
$\ch(\k)=0$. Furthermore, as we shall see in \S\ref{subsec:r1}, 
linearity holds in the non-realizable case when $q=1$ and again $\ch(\k)=0$. 

Using results of Schechtman and Varchenko \cite{SV91}, Denham constructed in 
\cite[Theorem~5]{De16}  a ``linear envelope" for the resonance varieties of arbitrary matroids, 
in all degrees. To describe his result, we need some notation. Given  
a partition $\pi=(\pi_1,\dots , \pi_k)$ of $[n]$ with $k$ parts, let 
$P_{\pi}$ denote the codimension-$k$ subspace of $\k^n$ given by 
\begin{equation}
\label{eq:P-pi}
P_{\pi}=\Big\{ x\in \k^n : \sum_{j\in \pi_i} x_j = 0\ \text{ for $1\le i\le k$}\Big\}.
\end{equation}
For instance, if $k=1$, then $P_{\pi}=\overline{V}_\k$, while if $k=n$, then $P_{\pi}=\{0\}$.

\begin{theorem}[\cite{De16}]
\label{thm:res-bound}
Let $\M$ be a matroid of rank $\ell$ on $n$ points, and let $\k$ be an arbitrary field. Then, 
for all $0\le q\le \ell$,
\[
\RR^q(\M,\k) \subseteq \bigcup_{i=1}^{q+1} 
\bigcup_{X\in L_i^{\irr}(\M)} P_{\{X,[n]\setminus X\}}.
\]
\end{theorem}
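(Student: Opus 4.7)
The plan is to derive the containment from a Schechtman--Varchenko style analysis of the Aomoto complex $(\OS(\M)_{\k}, \delta_a)$, which computes its cohomology flat by flat and shows that each irreducible flat $X$ can only contribute to $H^q$ when the local weight sum $w_X(a) \coloneqq \sum_{j \in X} a_j$ vanishes. The theorem will then be equivalent to the contrapositive: if $w_X(a) \ne 0$ for every irreducible flat $X$ of rank at most $q+1$, then $a \notin \RR^q(\M, \k)$.

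First, I would rephrase the target inclusion in terms of the linear forms $w_X$. By definition of $P_\pi$, we have $P_{\{X, [n] \setminus X\}} = \{a : w_X(a) = 0 \text{ and } w_{[n] \setminus X}(a) = 0\}$. Combined with Theorem~\ref{thm:res-matroids}, part \ref{r1}, which places $\RR^q(\M, \k)$ inside the hyperplane $\overline{V}_\k = \{w_{[n]}(a) = 0\}$, these two conditions collapse into the single equation $w_X(a) = 0$. Consequently, it suffices to show that any $a \in \overline{V}_\k$ with $w_X(a) \neq 0$ for every irreducible flat $X$ of rank $\le q+1$ lies outside $\RR^q(\M, \k)$.

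Second, the key computation would proceed via the Brieskorn-type decomposition $\OS^*(\M)_\k \cong \bigoplus_{X \in L(\M)} \OS^{\rk X}(\M_X)_\k$ together with the induced increasing filtration by rank of closures. A direct check on generators shows that multiplication by $a$ raises the rank by at most one, so there is an associated spectral sequence converging to $H^*(\OS(\M)_\k, \delta_a)$ whose $E_1$-page splits as a direct sum indexed by flats $X$, with the summand at $X$ equal to the cohomology of an intrinsic local Aomoto complex on $\OS(\M_X)_\k$ built from the restriction of $a$ to $X$. When $X$ is reducible, the decomposition $\M_X \cong \M_Y \oplus \M_Z$ gives $\OS(\M_X)_\k \cong \OS(\M_Y)_\k \otimes_\k \OS(\M_Z)_\k$, and Proposition~\ref{prop:resprod} lets us absorb the $X$-contribution into those of the proper sub-flats $Y$ and $Z$, so only irreducible flats need to be considered. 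The Schechtman--Varchenko local vanishing lemma then implies that for an irreducible flat $X$ with $w_X(a) \ne 0$, the local complex is acyclic in all cohomological degrees strictly less than $\rk X$; since it lives only in degrees up to $\rk X$, such an $X$ can contribute to $H^q$ only when $\rk X \le q+1$.

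Third, assembling the pieces yields the theorem: if $w_X(a) \ne 0$ for every irreducible flat $X$ of rank at most $q+1$, then every $E_1$-term that could survive to degree $q$ vanishes, hence $H^q(\OS(\M)_\k, \delta_a) = 0$ and $a \notin \RR^q(\M, \k)$. The main obstacle, and the technical heart of the argument, is the local acyclicity lemma: one must prove that the intrinsic Aomoto complex attached to the OS-algebra of an irreducible localization $\M_X$, with nonzero total weight $w_X(a)$, is exact outside the extremal degree $\rk X$. This statement is purely combinatorial-homological and holds for arbitrary simple matroids, which is precisely why Denham's bound applies beyond the realizable case.
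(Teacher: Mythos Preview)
The paper does not supply its own proof of this theorem; it is quoted from Denham \cite{De16}, with only the remark that Denham's argument uses results of Schechtman--Varchenko \cite{SV91}. Your sketch is precisely that approach: the flag filtration on the Orlik--Solomon algebra, the resulting spectral sequence whose $E_1$-terms are indexed by flats, and the local acyclicity input coming from the Schechtman--Varchenko machinery. So at the level of strategy you are aligned with the cited source.

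One small imprecision: invoking Proposition~\ref{prop:resprod} to dispose of reducible flats is not quite the right mechanism. That proposition concerns the global resonance varieties of a tensor product $B\otimes_{\k} C$, whereas what you need at a reducible flat $X$ with $\M_X\cong \M_Y\oplus \M_Z$ is a K\"unneth statement for the local Aomoto complexes appearing on the $E_1$-page, together with an induction on rank to propagate the vanishing from the irreducible pieces $Y,Z$ (and their sub-flats) up to $X$. The tensor decomposition $\OS(\M_X)_{\k}\cong \OS(\M_Y)_{\k}\otimes_{\k}\OS(\M_Z)_{\k}$ you write down is exactly the input for that K\"unneth argument, so the underlying idea is right; just replace the reference to Proposition~\ref{prop:resprod} with a direct K\"unneth/induction step. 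With that adjustment, and with the local acyclicity lemma you flag as the technical heart (which is indeed the Schechtman--Varchenko ingredient Denham uses), the outline is sound.
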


\subsection{Resonance in degree $1$}
\label{subsec:r1}
If $\M'\subset \M$ is a proper sub-matroid, there is a natural epimorphism 
$\OS_{\k}(\M) \surj \OS_{\k}(\M')$, which restricts in degree $1$ to an embedding 
$\RR^1_s(\M',\k) \inj \RR^1_s(\M,\k)$ for all $s\ge 1$. The irreducible 
components of $\RR^1_s(\M,\k)$ that  lie in the image 
of such an embedding are called {\em non-essential}; 
the remaining components are called {\em essential}. 
For instance, if $X$ is a flat in $L_2(\M)$, then 
$\RR^1_s(\M_X,\k)=\overline{V}_{\k}$ for all $s\le \mu(X)$; 
the irreducible components of $\RR^1_s(\M,\k)$ 
that lie in the image of $\RR^1_s(\M_X,\k) \inj \RR^1_s(\M,\k)$ 
are called {\em local}\/ components.

The resonance varieties of a realizable matroid were first 
defined and studied by Falk \cite{Fa97} for $\k=\C$, and by 
Matei--Suciu \cite{MS-aspm} and Falk \cite{Fa07} 
for arbitrary fields. A complete description of the 
resonance varieties $\RR^1_s(\M,\C)$ was given by 
Falk and Yuzvinsky \cite{FY07} in the realizable case 
and by Marco Buzun\'ariz \cite{MB09} in general. 

Let $\M$ be a (simple) matroid on ground set $\cE$. 
Given a $k$-multinet $\NN$ on $\M$ with parts 
$(\cE_1,\dots ,\cE_k)$ and multiplicity vector $m$, 
set $w_i=\sum_{u\in \cE_i} m_u e_u$ for each $1\le i\le k$, 
and  put 
\begin{equation}
\label{eq:pm}
P_\NN=\spn \{w_2-w_1,\dots , w_k-w_1\}.  
\end{equation}
By construction, $P_\NN$ is a linear subspace of $\overline{V}_{\C}$.  
As shown in \cite{FY07, MB09}, the subspace $P_\NN$ 
lies inside $\RR^1(\M,\C)$ and has dimension $k-1$. 

Now suppose there is a sub-matroid $\M'\subset \M$ 
which supports a $k$-multinet $\NN$. By the above, 
the linear space $P_{\NN}$ lies inside $\RR^1(\M',\C)$; moreover, 
the inclusion $\M'\inj \M$ induces an embedding 
$\RR^1(\M',\C) \inj \RR^1(\M,\C)$.  Thus, $P_{\NN}$ is a 
linear space of dimension $k-1$ that lies inside $\RR^1(\M,\C)$.
Conversely, it is shown in \cite[Theorem 2.5]{FY07} (in the realizable case) 
and \cite[Theorem 3]{MB09} (in general) that all non-zero irreducible 
components of $\RR^1(\M,\C)$ arise in this fashion.  

\begin{theorem}[\cite{FY07, MB09}]
\label{thm:res fy}
The positive-dimensional, irreducible components of 
$\RR^1(\M, \C)$ are linear subspaces of $\overline{V}_{\C}$ 
that are in one-to-one correspondence 
with the multinets on sub-matroids $\M'\subset \M$, with each 
$k$-multinet $\NN$ on $\M'$ corresponding to the $(k-1)$-dimensional 
subspace $P_{\NN}$; that is,
\[
\RR^1(\M, \C) = \{0\}\cup \bigcup_{\M' \subset \M}
\bigcup_{\textnormal{$\NN$ a multinet on $\M'$}} P_{\NN}.
\]
Moreover, any two of these components intersect only at $0$.
\end{theorem}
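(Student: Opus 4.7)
My plan is to split the argument into three pieces: (a) each $P_\NN$ sits inside $\RR^1(\M,\C)$, (b) every positive-dimensional irreducible component $L$ of $\RR^1(\M,\C)$ has the form $P_\NN$ for some multinet on a sub-matroid, and (c) any two such components meet only at $0$.

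For (a), by Corollary~\ref{cor:functorial resonance} I may reduce to the case $\M'=\M$. Using the description \eqref{eq:res1} of $\RR^1$, the claim amounts to showing that $P_\NN$ is isotropic of dimension $k-1$ in $\OS^1_\C(\M)$. Linear independence of $w_2-w_1,\dots,w_k-w_1$ is immediate from the disjointness of the parts. The isotropy claim boils down to $(w_i-w_1)(w_j-w_1)=0$ in $\OS^2_\C(\M)$, which I would verify by expanding in the exterior algebra $E$ and grouping the monomials $e_u\wedge e_v$ according to $X=u\vee v\in L_2(\M)$. Flats $X\notin\XX$ contribute nothing by \ref{mu2} (all their points lie in a single class), while flats $X\in\XX$ contribute terms that, after imposing the OS-relation $\partial(e_X)=0$ and invoking the class-independence of $n_X$ from \ref{mu3}, cancel among the four bilinear pieces $w_iw_j, w_iw_1, w_1w_j, w_1w_1$.

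The main obstacle is (b), and its crux is the \emph{linearity} of positive-dimensional components in characteristic $0$. This is established in the realizable case \cite{FY07} by producing an orbifold fibration $M(\A)\to\CP^1\setminus\{k\text{ points}\}$ via Arapura's theorem on cohomology jump loci, and in full generality \cite{MB09} by a combinatorial analysis of the $2\times 2$ minor ideal $J_{n-1}(\delta^1_A)$ from \S\ref{subsec:eqresvar}. Granting linearity, $L$ is an isotropic linear subspace of $\overline{V}_\C$ of dimension $k-1\ge 1$ (using Theorem~\ref{thm:res-matroids}, part \ref{r1}, together with Lemma~\ref{lem:res}). To recover the multinet, I would extract a partition-adapted basis $w_1,\dots,w_k$ of $L$ as follows: for a generic $a=\sum c_\alpha w_\alpha\in L$, declare $u\sim v$ in $\supp(a)$ when the coordinate vectors $(w_1(u),\dots,w_k(u))$ and $(w_1(v),\dots,w_k(v))$ are $\C$-proportional, then change basis of $L$ so that each $w_\alpha$ is supported on exactly one class $\cE_\alpha$. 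Setting $\M'$ to be the localization at $\cE_1\cup\cdots\cup\cE_k$ and $m_u$ equal to the suitably scaled common proportionality factor, axioms \ref{mu1}--\ref{mu3} follow by running the cancellation in (a) in reverse flat-by-flat, while axiom \ref{mu4} is equivalent to the irreducibility of $L$, since a disconnection of some $\cE_\alpha$ would split $L$ as a non-trivial sum of smaller isotropic subspaces. A dimension count then yields $L=P_\NN$.

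Finally, for (c), the extraction in (b) recovers the partition canonically as the common proportionality-class partition of any generic element of $L$, so distinct multinets give rise to distinct components; their intersection therefore contains no generic element of either, and by linearity must reduce to $\{0\}$.
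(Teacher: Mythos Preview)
The paper does not supply its own proof of this theorem; it is quoted from \cite{FY07,MB09}, with only the surrounding paragraphs indicating that $P_\NN\subset\RR^1(\M,\C)$ has dimension $k-1$ and that the converse is due to those references. So there is no in-paper argument to compare against, and I can only assess your sketch on its own terms.

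Parts (a) and (b) are broadly in the spirit of \cite{FY07,MB09}. In (a) your bookkeeping is slightly off: since each $w_\alpha$ is supported on $\cE_\alpha$ and $w_1^2=0$, every monomial occurring in $(w_i-w_1)(w_j-w_1)$ already involves points from distinct parts, so \emph{all} contributing flats lie in $\XX$; axiom \ref{mu2} is not what makes the non-$\XX$ flats vanish. In (b) your outline is reasonable, but you have not addressed why the extracted multiplicities are positive integers rather than arbitrary complex scalars, nor why $P_\NN$ is a \emph{maximal} irreducible piece of $\RR^1$ and not merely an isotropic subspace properly contained in a larger component. Both points require real work in \cite{MB09}.

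The genuine gap is in (c). From ``the intersection contains no generic element of either'' you cannot conclude that $L_1\cap L_2=\{0\}$: two distinct planes through the origin in $\C^3$ meet in a line, and neither contains a generic point of the other. The transversality of components is a separate, nontrivial fact (going back to \cite{CS-camb99,LY00}); one route is to show that for any $0\neq a\in\RR^1(\M,\C)$ the space $\{b\in\OS^1_\C:ab=0\}$ modulo $\C a$ has a very constrained structure forcing a \emph{unique} maximal isotropic subspace through $a$, so that $a$ cannot lie on two distinct components. Your proportionality-class extraction does not supply this, because it only applies to generic $a\in L$, whereas a putative nonzero intersection point is non-generic in both components.
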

This result can be sharpened to describe the decomposition into 
irreducible components of the $\RR^1_s(\M,\C)$, for all $s\ge 1$. 
As shown in \cite[Lemma 4]{MB09}, we have that 
\begin{equation}
\label{eq:rsa}
\RR^1_s(\M,\C) = \{0\}\cup \bigcup_{\M' \subset \M}
\bigcup_{\substack{\text{$\NN$ a multinet on $\M'$}\\
\text{with at least $s+2$ parts}}} P_{\NN}.
\end{equation}

If the matroid is decomposable over $\Q$, all resonance is local. More precisely, 
we have the following theorem, which was proved in \cite{Su-decomp} in the 
realizable case, and whose proof works in general.

\begin{theorem}[\cite{Su-decomp}]
\label{thm:decomp-res}
Let $\M$ be a simple matroid on ground set $[n]$. If $\M$ is decomposable over $\Q$, 
then, for each $s\ge 1$, 
\[
\RR^1_s(\M,\C)=\bigcup_{\substack{X \in L_2(\M)\\ \mu(X)>s}}  L_X,
\]
where $L_X = \Big\{ x \in \C^n : \sum_{i\in X} x_i =0 \ \text{and $x_i= 0$ if $i\notin X$} \Big\}$.
\end{theorem}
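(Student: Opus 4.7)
The plan is to combine the structural description of $\RR^1(\M,\C)$ via multinets (Theorem~\ref{thm:res fy} and equation~\eqref{eq:rsa}) with the algebraic consequences of decomposability over $\Q$ via the infinitesimal Alexander invariant. For the easy inclusion $\supseteq$: for each $X\in L_2(\M)$ with $\mu(X)>s$, the localization $\M_X$ is isomorphic to $U_{2,|X|}$ with $|X|=\mu(X)+1\ge s+2$, and the partition of $X$ into singletons defines a $(|X|,1)$-net on $\M_X\subset \M$ whose associated subspace $P_\NN$ is exactly $L_X$. Equation~\eqref{eq:rsa} then gives $L_X\subseteq \RR^1_s(\M,\C)$ directly.

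For the reverse inclusion, first treat $s=1$. By Theorem~\ref{thm:ps-decomp} with $\k=\Q$, decomposability makes $\Pi\otimes\Q\colon \h(\M)'\otimes\Q\isom (\h^{\loc})'\otimes\Q$ an isomorphism, which descends to an isomorphism $\overline{\Pi}\otimes\Q\colon \B(\h(\M))\otimes\Q\isom \B(\h^{\loc})\otimes\Q$. Since $\h^{\loc}=\prod_{X\in L_2(\M)}\h(\M_X)$, we have $\B(\h^{\loc})=\bigoplus_X \B(\h(\M_X))$, so extending scalars yields a direct-sum decomposition
\[
W_1(\M,\C)\cong \bigoplus_{X\in \widetilde{L}_2(\M)}\B(\h(\M_X))\otimes \C
\]
of $S$-modules (the $|X|=2$ summands vanish since $\h(U_{2,2})$ is abelian). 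The action of $S=\Sym(V^\vee)$ on the $X$-summand factors through the degree-one component of $\Pi$, which sends $x_u\mapsto x_u$ for $u\in X$ and $x_u\mapsto 0$ for $u\notin X$; hence the support of this summand in $V_\C$ lies in $\{v:v_u=0\text{ for }u\notin X\}$, and within this subspace it coincides with $\supp W_1(\M_X,\C)=\RR^1_1(\M_X,\C)$, which Corollary~\ref{cor:res-mat} identifies with the hyperplane $\sum_{u\in X}v_u=0$. The intersection is $L_X$, so by \eqref{eq:union-res}, $\RR^1_1(\M,\C)=\supp W_1(\M,\C)=\bigcup_{X\in \widetilde{L}_2(\M)}L_X$.

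For $s\ge 2$, Theorem~\ref{thm:res fy} describes the positive-dimensional irreducible components of $\RR^1(\M,\C)=\RR^1_1(\M,\C)$ as the subspaces $P_\NN$ associated to multinets on sub-matroids of $\M$. The step just completed identifies these components with the $L_X$, so the only contributing multinets live on the rank-$2$ localizations $\M_X$. The connectivity axiom~\ref{mu4} forces every multinet on $U_{2,|X|}$ to partition $X$ into $|X|$ singletons, so equation~\eqref{eq:rsa} shows $L_X$ appears in $\RR^1_s(\M,\C)$ exactly when $|X|\ge s+2$, i.e., $\mu(X)>s$.

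The main obstacle is the middle step: promoting the Lie-algebraic isomorphism $\overline{\Pi}\otimes\Q$ to a genuine $S$-module decomposition and identifying the support of each local summand $\B(\h(\M_X))\otimes \C$ inside $V_\C$. This requires tracking how the non-bijective degree-one component of $\Pi$ transports the $S$-action, and crucially uses Corollary~\ref{cor:res-mat} to compute the rank-$2$ local resonance as a hyperplane, together with the equality between depth-one support locus and jump locus provided by~\eqref{eq:union-res}. Once this is in place, the higher-depth case reduces to the combinatorial observation that the only multinets on $U_{2,n}$ have $n$ singleton parts.
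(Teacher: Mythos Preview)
Your argument is correct. The paper itself does not supply a proof of this theorem; it simply cites \cite{Su-decomp} and asserts that the proof given there in the realizable case carries over to arbitrary simple matroids. Your approach assembles exactly the ingredients the paper has developed: Proposition~\ref{prop:holo-loc} and Theorem~\ref{thm:ps-decomp} to obtain the $S_\Q$-module isomorphism $\B(\h(\M))\otimes\Q\cong\B(\h^{\loc})\otimes\Q$, Lemma~\ref{lem:alex-koszul} and~\eqref{eq:union-res} to pass to supports and identify $\RR^1_1(\M,\C)$, and then Theorem~\ref{thm:res fy} together with~\eqref{eq:rsa} to handle depth $s\ge 2$. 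This is the natural line of reasoning and almost certainly parallels the argument in the cited reference.

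Two small remarks. First, the $s\ge 2$ step can be phrased slightly more directly: once the $s=1$ case identifies the irreducible components of $\RR^1_1(\M,\C)$ with the $L_X$ (of dimension $\mu(X)$), the one-to-one correspondence in Theorem~\ref{thm:res fy} forces the multinet attached to $L_X$ to have $\mu(X)+1=|X|$ parts, so~\eqref{eq:rsa} immediately gives $L_X\subseteq\RR^1_s$ if and only if $|X|\ge s+2$; your detour through the support argument and the classification of multinets on $U_{2,|X|}$ is correct but not strictly needed. Second, your support argument (that $P_\NN$ is supported exactly on the ground set of the sub-matroid carrying $\NN$) is the clean way to see that the only multinets arising are those on the rank-$2$ localizations, and this is worth stating explicitly since it is what pins down the bijection.
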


It is important to note that over fields $\k$ of positive characteristic, 
the resonance varieties $\RR^1(\M,\k)$ are no longer guaranteed 
to be unions of linear subspaces, even when $\M$ is realizable.
An example illustrating this non-linearity was given by Falk in \cite{Fa07}.

\subsection{Koszul modules and holonomy Chen ranks}
\label{subsec:koszul-chen}
The Koszul modules of a matroid $\M$ (over a field $\k$) are the 
Koszul modules of its Orlik--Solomon $\k$-algebra: $W_q(\M,\k)\coloneqq W_q(A_{\k}(\M))$. 
These objects are finitely generated graded modules over the polynomial ring 
$S_{\k}=\Sym (V^{\vee}\otimes \k)$. 
The first in this sequence of modules admits a fruitful  
interpretation in terms of the holonomy 
Lie algebra of $\M$, which we record in the next lemma.

\begin{lemma}
\label{lem:alex-koszul}
The Koszul module $W_1(\M,\k)$ is isomorphic (as a graded 
$S_{\k}$-module) to the infinitesimal 
Alexander invariant $\B(\h(\M)) \otimes \k$. 
\end{lemma}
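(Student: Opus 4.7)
The plan is to show that both $W_1(\M,\k)$ and $\B(\h(\M)) \otimes \k$ admit the \emph{same} presentation as graded $S_{\k}$-modules, whence they must be isomorphic. The excerpt already gives an explicit presentation of $W_1(\M,\k)$ as the cokernel
\[
(E_3 \oplus K) \otimes_{\k} S_{\k} \xrightarrow{\,\partial_3^E + \iota \otimes \id_{S_{\k}}\,}
E_2 \otimes_{\k} S_{\k} \twoheadrightarrow W_1(\M,\k),
\]
where $E_* = \bwedge(V^{\vee}\otimes\k)$, $K \subset V^{\vee}\wedge V^{\vee}$ is the annihilator of $\ker(A^1\wedge A^1 \to A^2)$, and $\partial_3^E$ is the Koszul-type differential dual to the Aomoto differential on the exterior algebra. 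Observe that, under the identification $\Lie_2(V^{\vee}\otimes \k) \cong V^{\vee}\wedge V^{\vee}$ via $[x,y]\leftrightarrow x\wedge y$, this $K$ is precisely the space of defining relations of the quadratic Lie algebra $\h(\M)\otimes \k = \Lie(V^{\vee}\otimes \k)/\ideal(K)$.

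Second, I would derive a parallel presentation for $\B(\h) = \h'/\h''$. Since $\h$ is generated in degree $1$ by $\h_1 = V^{\vee}\otimes \k$ and since the adjoint action of $\h_1$ on the metabelian quotient $\h/\h''$ factors through the abelianization, the module structure extends to an action of $S_{\k} = \Sym(\h_1)$ on $\B(\h)$. In degree $0$ (after the shift $\B(\h)_r = (\h'/\h'')_{r+2}$), the module is generated by $\h_2 = (V^{\vee}\wedge V^{\vee})/K$, so the generators are $E_2 \otimes S_{\k}$ modulo $K\otimes S_{\k}$. The remaining relations are exactly the Jacobi identities: for $x,y,z \in V^{\vee}$, the element $[x,[y,z]] - [y,[x,z]] + [z,[x,y]]$ vanishes in $\h$ and therefore in $\h'/\h''$ translates, via the $S_{\k}$-action $u\cdot \overline{[\,\cdot\,,\,\cdot\,]} = \overline{[u,[\,\cdot\,,\,\cdot\,]]}$, to the Koszul-type relation $x\cdot (y\wedge z) - y\cdot(x\wedge z) + z\cdot(x\wedge y) = 0$. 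This gives a second presentation
\[
(E_3 \oplus K) \otimes_{\k} S_{\k} \xrightarrow{\,d_{\mathrm{Kos}} + \iota \otimes \id_{S_{\k}}\,}
E_2 \otimes_{\k} S_{\k} \twoheadrightarrow \B(\h(\M))\otimes \k,
\]
where $d_{\mathrm{Kos}}$ is the standard Koszul differential on the exterior algebra.

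Third, I would exhibit the isomorphism by matching the two presentations. The natural map $E_2 \otimes S_{\k} \to \B(\h)$ defined on generators by $x\wedge y \otimes 1 \mapsto \overline{[x,y]}$ is $S_{\k}$-linear, kills $K \otimes S_{\k}$ by construction of $\h$, and kills the image of $\partial_3^E = d_{\mathrm{Kos}}$ by the Jacobi computation above; it therefore factors through a surjection $W_1(\M,\k) \twoheadrightarrow \B(\h)\otimes\k$. In the opposite direction, the same presentation data defines an inverse surjection, so the two are mutually inverse. The main technical obstacle I expect is the careful identification of $\partial_3^E$ arising from the dualized Aomoto complex with the Koszul differential $d_{\mathrm{Kos}}$ appearing in the Jacobi relations: this requires writing both out on a basis and tracking signs via the Leibniz rule \eqref{eq:partial}, and relies on the clean fact that $\partial_3^E$ is the unique $S_{\k}$-linear extension of the map $e_i\wedge e_j \wedge e_k \mapsto x_i(e_j\wedge e_k) - x_j(e_i\wedge e_k) + x_k(e_i\wedge e_j)$, which is exactly the image of the Jacobi identity under the adjoint action modulo $\h''$.
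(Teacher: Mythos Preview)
Your approach---compare explicit presentations of the two modules---is exactly the paper's; the paper simply cites \cite[Theorem~6.2]{PS-imrn04} for the presentation of $\B(\h(\M))$ rather than deriving it. So the strategy is sound, but your derivation of that presentation has a real gap.

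You correctly establish that the map $E_2\otimes_{\k} S_{\k}\to \B(\h)\otimes\k$, $x\wedge y\mapsto \overline{[x,y]}$, is surjective and that both $K\otimes S_{\k}$ and the image of $\partial_3^E$ lie in its kernel (the latter by Jacobi). What you do \emph{not} prove is your assertion that ``the remaining relations are exactly the Jacobi identities.'' You have only shown one containment of kernels, which yields a surjection $W_1(\M,\k)\twoheadrightarrow \B(\h)\otimes\k$. Your sentence ``In the opposite direction, the same presentation data defines an inverse surjection'' is circular: to define a map $\B(\h)\to W_1$ on generators you must check it kills \emph{all} relations of $\B(\h)$, and you have not determined what those relations are---you have only exhibited some of them.

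The missing ingredient is the non-trivial fact that for the \emph{free} Lie algebra $\mathfrak{L}=\Lie(V^{\vee}\otimes\k)$, the $S_{\k}$-module $\mathfrak{L}'/\mathfrak{L}''$ already has the presentation $E_3\otimes S_{\k}\xrightarrow{\partial_3^E} E_2\otimes S_{\k}\twoheadrightarrow \mathfrak{L}'/\mathfrak{L}''$ (equivalently, that the Koszul relations exhaust the kernel). Once this is known, passing to the quadratic quotient $\h=\mathfrak{L}/\ideal(K)$ adds precisely the relations $K\otimes S_{\k}$, and your argument closes. This free-case statement is classical (it underlies Chen's computation of the Chen ranks of free groups) but it is not automatic; it is the content of the result the paper cites. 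Either invoke that reference, or supply a proof of the free case---for instance via the identification of $\mathfrak{L}'/\mathfrak{L}''$ with the augmentation ideal of $S_{\k}$ tensored down appropriately, or via a Hilbert-series comparison using \eqref{eq:chen-free}.
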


\begin{proof}
The $S_{\k}$-module $W_1(\M,\k)$ has the presentation 
given in \eqref{eq:pres-w1a}. Comparing this presentation 
to that of the $S$-module $\B(\h(\M))$ 
given in \cite[Theorem 6.2]{PS-imrn04} yields the desired isomorphism.
\end{proof} 

\begin{problem}
\label{prob:high-koszul}
Given a matroid $\M$, find explicit presentations for the Koszul modules 
$W_q(\M,\k)$ for $q\ge 2$.
\end{problem}

The support resonance varieties of $\M$ (over $\k$) are 
defined as $\RR_{q,s}(\M,\k) \coloneqq \RR_{q,s}(\OS_{\k}(\M) )$. 
In particular, the sets $\RR_{q}(\M,\k)=\RR_{q,1}(\M,\k)$ 
are the support loci of the Koszul modules $W_q(\M,\k)$. 
From formula \eqref{eq:union-res}, we know that $\RR_1(\M,\k) =\RR^1(\M,\k)$ 
if $W_1(\M,\k)\ne 0$, a condition which holds precisely when $L_2(\M)$ contains 
flats of length at least $3$. 

\begin{corollary}
\label{cor:supp-bhm}
If $\widetilde{L}_2(\M)\ne \emptyset$, then the support of $\B(\h(\M)) \otimes \k$ 
coincides with the resonance variety $\RR^1(\M,\k)$.
\end{corollary}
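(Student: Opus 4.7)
The plan is to chain together Lemma \ref{lem:alex-koszul} with equation \eqref{eq:union-res}; the only real verification needed is that $W_1(\M,\k)$ is nonzero under the stated hypothesis.

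First I would recall from Lemma \ref{lem:alex-koszul} that there is an isomorphism $W_1(\M,\k) \cong \B(\h(\M))\otimes \k$ of graded $S_{\k}$-modules. Support is an invariant of a module (it is the vanishing locus of the annihilator), so the two sides automatically have equal supports. In particular, $\supp(\B(\h(\M))\otimes\k) = \supp(W_1(\M,\k)) = \RR_1(\M,\k)$ by the definition of the support resonance locus in \S\ref{subsec:kozul-res}.

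Next I would invoke equation \eqref{eq:union-res}, applied in degree $q=1$: whenever $W_1(\M,\k)\ne 0$, one has $\RR_1(\M,\k) = \RR^1(\M,\k)$. Combining this with the previous step yields the claimed equality $\supp(\B(\h(\M))\otimes\k) = \RR^1(\M,\k)$.

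The only thing left to justify is the non-vanishing $W_1(\M,\k)\ne 0$ under the assumption $\widetilde{L}_2(\M)\ne\emptyset$. Via the isomorphism from Lemma \ref{lem:alex-koszul}, this reduces to showing that $\B(\h(\M))\otimes\k$ is nonzero. Here I would use the grading conventions of \S\ref{subsec:holo-Chen}: since $\h=\h(\M)$ is generated in degree $1$, one has $\h'=\bigoplus_{r\ge 2}\h_r$ and $\h''=[\h',\h']$ starts in degree $\ge 4$, hence $\B(\h)_0 = (\h'/\h'')_2 = \h_2$. The rank of $\h_2$ is $\phi_2(\M) = \sum_{X\in L_2(\M)}\binom{\mu(X)-1}{2}$, and any $X\in\widetilde{L}_2(\M)$ contributes $\binom{|X|-2}{2}\ge 1$ since $|X|\ge 3$. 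Thus $\phi_2(\M)\ge 1$, and since $\h_2$ is free abelian, $\h_2\otimes\k \ne 0$; consequently $\B(\h(\M))\otimes\k$ has nonzero degree-$0$ component and $W_1(\M,\k)\ne 0$, completing the argument.

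There is no real obstacle here; this is essentially a bookkeeping argument that repackages already-established results. The only mildly delicate point is to make sure one correctly identifies $\B(\h)_0$ with $\h_2$ (so that non-vanishing follows from the combinatorially explicit lower bound on $\phi_2$), rather than confusing the degree shift in the definition $\B(\g)_r = (\g'/\g'')_{r+2}$.
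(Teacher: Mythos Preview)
Your approach is exactly the one the paper takes: combine Lemma~\ref{lem:alex-koszul} with \eqref{eq:union-res}, and verify $W_1(\M,\k)\ne 0$ under the hypothesis $\widetilde{L}_2(\M)\ne\emptyset$.

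There is, however, a genuine arithmetic slip in your non-vanishing check. You quote the formula $\phi_2=\sum_{X\in L_2(\M)}\binom{\mu(X)-1}{2}$ from the paper and then assert that a flat $X$ with $\abs{X}\ge 3$ contributes $\binom{\abs{X}-2}{2}\ge 1$; but for $\abs{X}=3$ this is $\binom{1}{2}=0$, so the argument as written fails precisely in the most common case. In fact the displayed formula in the paper contains a typo: the correct expression is $\phi_2=\sum_{X\in L_2(\M)}\binom{\mu(X)}{2}=\sum_{X\in L_2(\M)}\binom{\abs{X}-1}{2}$ (one checks this directly from $\phi_2=\binom{n}{2}-b_2(\M)$, using $b_2=\sum_X(\abs{X}-1)$ and $\binom{n}{2}=\sum_X\binom{\abs{X}}{2}$; it also matches Example~\ref{ex:holo-U2n}, where $\phi_2(U_{2,n})=\binom{n-1}{2}$). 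With the corrected formula each $X\in\widetilde{L}_2(\M)$ contributes $\binom{\abs{X}-1}{2}\ge\binom{2}{2}=1$, and your argument then goes through unchanged.
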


We also know from Theorem \ref{thm:res fy}
that each positive-dimensional, irreducible component of 
$\RR^1(\M, \C)$ is a $(k-1)$-dimensional linear subspace 
of the form $P_{\NN}$ corresponding to a $k$-multinet $\NN$ 
on a sub-matroid $\M'\subset \M$.  On the other hand, we 
know from Proposition \ref{prop:inf Massey} that 
\begin{equation}
\label{eq:Massey-holo}
\sum\limits_{r\ge 2}\theta_{r}(\M)\cdot t^{r-2}=\Hilb (\B(\h(\M))\otimes \C,t).  
\end{equation}
Finally, we know from Example \ref{ex:chen-U2n} that the 
Chen ranks of the rank-$2$ uniform matroid on $k$ points are given by 
$\theta_r(U_{2,k})=(r-1)\binom{k+r-3}{r}$ for $r\ge 2$. 

These considerations lead to the following conjecture, 
that generalizes the Chen Ranks 
conjecture for arrangement groups from \cite{Su-conm01}.

\begin{conjecture}
\label{conj:chen-ranks-mat}
The holonomy Chen ranks of a simple matroid $\M$ 
are given by 
\[
\theta_r(\M) = (r-1) \sum_{\M'\subset \M} \sum_{k\ge 3} 
n_k(\M')\,  \binom{k+r-3}{r} 
\]
for $r\gg 0$, where $n_k(\M')$ is the number of $k$-multinets 
supported by a sub-matroid $\M'\subset \M$.
\end{conjecture}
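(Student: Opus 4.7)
The plan is to translate the conjecture into a statement about the asymptotic behavior of a Hilbert function. By Proposition~\ref{prop:inf Massey} together with Lemma~\ref{lem:alex-koszul} (applied with $\k=\C$), we have
\[
\sum_{r\ge 2}\theta_r(\M)\,t^{r-2}
  = \Hilb\!\big(\B(\h(\M))\otimes\C,\,t\big)
  = \Hilb\!\big(W_1(\M,\C),\,t\big),
\]
so the conjecture is equivalent to the assertion that the Hilbert function of the finitely generated graded $S_\C$-module $W_1(\M,\C)$ agrees, for $r\gg 0$, with the polynomial
$\sum_{\M'\subset\M}\sum_{k\ge 3} n_k(\M')(r-1)\binom{k+r-3}{r}$ (after the degree shift). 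Since for any finitely generated graded module the Hilbert function is eventually polynomial and its leading behavior is detected on the top-dimensional components of the support, the strategy is to compute the multiplicity and polynomial contribution of $W_1(\M,\C)$ at each such component.

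The identification of the support is available: by Corollary~\ref{cor:supp-bhm}, $\supp W_1(\M,\C)=\RR^1(\M,\C)$, and by Theorem~\ref{thm:res fy} the positive-dimensional irreducible components of $\RR^1(\M,\C)$ are precisely the linear subspaces $P_{\NN}$ of dimension $k-1$, one for each $k$-multinet $\NN$ on a sub-matroid $\M'\subset\M$. The natural inclusion $\M'\hookrightarrow\M$ induces an epimorphism $\OS_{\C}(\M)\twoheadrightarrow\OS_{\C}(\M')$, hence (via the presentation \eqref{eq:pres-w1a}) a surjection $W_1(\M,\C)\twoheadrightarrow W_1(\M',\C)$. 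The plan would be to combine these with the further surjections $W_1(\M',\C)\twoheadrightarrow W_1(\M'_{\NN},\C)$ where $\M'_{\NN}$ is a model for which the component $P_\NN$ is realized by a uniform structure, and to compute the contribution of each $P_\NN$ as the Hilbert polynomial of $W_1(U_{2,k},\C)$ localized at the generic point of $\overline{V}_\C$. By Example~\ref{ex:chen-U2n} and \eqref{eq:Massey-holo}, this local contribution is exactly $(r-1)\binom{k+r-3}{r}$, matching the conjectured formula.

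The main obstacle is showing additivity of these local contributions and ruling out asymptotic contributions from lower-dimensional (in particular, local rank-$2$) components of $\RR^1(\M,\C)$. Concretely, one must prove that the kernel of the natural map
\[
W_1(\M,\C)\longrightarrow \bigoplus_{\M'\subset\M}\bigoplus_{\NN\text{ on }\M'} W_1(U_{2,k},\C),
\]
assembled out of the surjections above, is supported on a proper subvariety of $\bigcup_\NN P_\NN$ and hence contributes a strictly smaller polynomial for $r\gg 0$; this is where the distinct components $P_\NN$ must be shown to intersect only at $0$ (already guaranteed by Theorem~\ref{thm:res fy}) but also to be ``independent'' at the module-theoretic level. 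The decomposable case (Theorem~\ref{thm:decomp-res}, Corollary~\ref{cor:decomp}) provides a model where no nontrivial multinets arise and the answer reduces to the local formula \eqref{eq:theta-bound}, confirming the conjecture in that range. The difficulty is precisely to handle the essential, multinet-arising components: this requires controlling the free resolution of $W_1(\M,\C)$ sufficiently to pass from the geometric decomposition of $\RR^1(\M,\C)$ into a module-theoretic decomposition, in the spirit of the proofs of the classical Chen Ranks conjecture for arrangements, but now without the topological input that pinned down the realizable case.
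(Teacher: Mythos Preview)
This statement is presented in the paper as a \emph{conjecture}, not a theorem; no proof is given. The paper only records that, in the realizable case, the conjecture has been verified by Cohen and Schenck under the additional hypothesis that all non-local multinets are essential, and it identifies the key technical ingredient there as the projective reducedness of the resonance scheme $\bR_1(\M,\C)$.

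Your outline is a reasonable heuristic, and you have correctly located the obstruction: one must pass from the set-theoretic decomposition of $\supp W_1(\M,\C)=\RR^1(\M,\C)$ into the linear subspaces $P_\NN$ to an additive decomposition of the Hilbert polynomial of $W_1(\M,\C)$. But the mechanism you propose is not available. A $k$-multinet $\NN$ on a sub-matroid $\M'$ does not produce a sub-matroid of $\M'$ isomorphic to $U_{2,k}$; it only produces a linear subspace $P_\NN\subset\RR^1(\M',\C)$. Hence the surjection $W_1(\M',\C)\twoheadrightarrow W_1(U_{2,k},\C)$ you invoke does not arise from OS-algebra functoriality, and the assembled map to $\bigoplus_\NN W_1(U_{2,k},\C)$ is not defined. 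More fundamentally, even knowing that the components $P_\NN$ meet pairwise only at $0$, the Hilbert polynomial of $W_1(\M,\C)$ depends on the multiplicity of the module along each $P_\NN$, that is, on the scheme structure of $\bR_1(\M,\C)$. Showing these multiplicities are all equal to $1$ is precisely the reducedness statement the paper flags as the crux of the known partial results; without it (and no such result is available for non-realizable matroids, nor in full generality even in the realizable case), the argument cannot be completed, which is why the statement remains open.
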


In the realizable case, the conjecture has been verified by Cohen and Schenck \cite{CSc-15} 
in the case when all (non-local) multinets on the arrangement are essential. 
A key ingredient in the proof is establishing 
that the scheme $\bR_1(\M,\C)$ is projectively reduced, see \cite{CSc-15} and the 
generalization from \cite{AFRS24}. An effective version of the Chen Ranks conjecture 
is being considered in \cite{AFRS}.

\section{Hyperplane arrangements}
\label{sect:hyp-arr}
Hyperplane arrangements are geometric realizations of matroids, whose 
combinatorial structure is encoded by their intersection lattice. In this section, 
we explore the topology of their complements, focusing on the fundamental 
group and its associated Lie algebras, and examine the resonance varieties 
of the Orlik--Solomon algebra, revealing connections between degree-$1$ 
resonance, multinets, and the matroid’s flats. These invariants bridge the 
combinatorics of arrangements with their topological properties, while 
highlighting possible differences, such as the behavior of Lie algebra maps in 
positive characteristic, or the structure of the higher-degrees resonance varieties.

\subsection{Intersection lattice and complement}
\label{subsec:arr-lat}

An {\em arrangement of hyperplanes}\/ is a finite set $\A$ of hyperplanes 
(i.e., codimension-$1$ linear subspaces) in a finite-dimensional complex 
vector space $\C^{\ell}$. Each hyperplane $H\in \A$ may be viewed as the
kernel of a nontrivial linear form $\alpha_H\colon \C^{\ell}\to \C$, unique 
up to scalar. The combinatorics 
of the arrangement is encoded in its {\em intersection lattice}, $L(\A)$— 
that is, the poset of all intersections of hyperplanes in $\A$ (also 
known as {\em flats}), ordered by reverse inclusion and ranked by 
codimension. The join of two flats $X,Y\in L(\A)$ is 
$X\vee Y=X\cap Y$, while the meet $X\wedge Y$ 
is the smallest flat containing the sum $X+Y$.

We may view $\A$ as a realization of a simple matroid $\M$, whose 
points correspond to the hyperplanes in $\A$, and whose dependent 
subsets are those whose associated normal vectors are linearly dependent. 
The lattice of flats of the matroid $\M$ then identifies naturally with $L(\A)$, 
and under this dictionary, the operations of join, meet, and rank coincide. 
(For instance, the rank of a flat $X$ is both its codimension in $\C^\ell$ and 
the rank of the corresponding flat in the matroid.)

The union of the hyperplanes comprising the arrangement, $W=\bigcup_{H\in\A} H$, 
is the variety in $\C^{\ell}$ defined by the polynomial $Q=\prod_{H\in \A} \alpha_H$. 
The complement of the arrangement, $M(\A)=\C^{\ell}\setminus W$, 
is a connected, smooth, complex quasi-projective variety. Moreover,  
$M=M(\A)$ is a Stein manifold, and thus has the homotopy type of a 
CW-complex of dimension at most $\ell$. In fact, $M$ splits off the 
$\C$-linear subspace $\bigcap_{H\in \A} H$, whose dimension we call the 
{\em corank}\/ of $\A$. Accordingly, we define the {\em rank} of $\A$ as 
\begin{equation}
\label{eq:rankA}
\rank(\A)\coloneqq \ell - \corank(\A) = \dim_{\C}\big(\!\spn \{\alpha_H : H\in\A\}\big),
\end{equation}
which agrees with the rank of the associated matroid $\M$. If $\corank(\A)=0$, 
that is, if the hyperplanes intersect only at the origin, we say that $\A$ is {\em essential}.

The group $\C^*$ acts freely on $\C^{\ell}\setminus \{0\}$ via 
$\zeta\cdot (z_1,\dots,z_{\ell})=(\zeta z_1,\dots, \zeta z_{\ell})$. 
The orbit space is the complex projective space of dimension $\ell-1$, 
while the orbit map, $\pi\colon \C^{\ell}\setminus \{0\} \to \CP^{\ell-1}$,  
$z \mapsto [z]$, is the Hopf fibration. 
The set $\P(\A)=\{\pi(H)\colon H\in \A\}$ is an 
arrangement of codimension $1$ projective subspaces in $\CP^{\ell-1}$. 
Its complement, $U=U(\A)$, coincides with the quotient $\P(M)=M/\C^*$. 
The Hopf map restricts to a bundle map, $\pi\colon M\to U$, with fiber 
$\C^{*}$. Fixing a hyperplane $H_0\in \A$, we see that $\pi$ is 
also the restriction to $M$ of the bundle map 
$\C^{\ell}\setminus H_0\to \CP^{\ell-1} \setminus \pi(H_0) \cong \C^{\ell-1}$.  
This latter bundle is trivial, and so we have a diffeomorphism  
$M \cong U\times \C^*$.

The cohomology ring of a hyperplane arrangement 
complement $M=M(\A)$ was computed by Brieskorn in \cite{Br}, 
building on the work of Arnol'd on the cohomology 
ring of the pure braid group. In \cite{OS}, Orlik and Solomon 
described this ring purely combinatorially, showing it is 
determined by the intersection lattice $L(\A)$. More precisely, 
in the terminology from \S\ref{subsec:OS}, they showed 
that $H^*(M;\Z)$ is isomorphic to the OS-algebra $\OS(\M)$ of 
the matroid $\M=\M(\A)$ corresponding to $\A$. It follows from their 
description that the cohomology ring of the projectivized 
complement, $H^*(U,\Z)$, is isomorphic to the projective 
OS-algebra $\overline{\OS}(\M)$.

For every arrangement $\A$, the complement $M=M(\A)$ is a (rationally) 
formal space, in the sense of Sullivan \cite{Sullivan}.  
Indeed, for each $H\in \A$, the $1$-form 
$\omega_H= \frac{1}{2\pi \ii} \, d \log \alpha_H$ on $\C^{\ell}$ restricts 
to a $1$-form on $M$.  As shown by Brieskorn \cite{Br},  
if $\mathcal{D}$ denotes the subalgebra 
of the de~Rham algebra $\Omega^*_{\rm dR}(M)$ 
generated over $\R$ by these $1$-forms, 
the correspondence $\omega_H \mapsto [\omega_H]$ 
induces an isomorphism $\mathcal{D} \to H^*(M;\R)$, 
and then Sullivan's machinery implies that $M$ is formal over $\R$ 
(and thus, over $\Q$). 
On the other hand, as shown by Matei  \cite{Mt06}, arrangements 
complements need not be formal over a field $\k$ of characteristic $p>2$, 
due to the presence of non-vanishing Massey triple products in $H^2(M; \k)$. 

\subsection{Fundamental group}
\label{subsec:pi1}

Fix a basepoint $x_0$ in $M(\A)$, and consider the 
fundamental group $G(\A)=\pi_1(M(\A),x_0)$. 
For each hyperplane $H\in \A$, pick a meridian curve about $H$, 
oriented compatibly with the complex orientations on $\C^{\ell}$ and $H$, 
and let $\gamma_H$ denote the based homotopy class of this curve, 
joined to the basepoint by a path in $M$.  By the van Kampen theorem, 
then, the group $G=G(\A)$ is generated by the set 
$\{\gamma_H\}_{H\in \A}$. Using the braid monodromy algorithm from 
\cite{CS-cmh97}, one obtains a finite presentation 
of the form $G=F_n/R$, where $F_n$ is the free group of rank $n=\abs{\A}$  
and the relators in $R$ belong to the commutator 
subgroup $F_n'$. Consequently, the abelianization of the arrangement 
group, $G_{\ab}=H_1(G;\Z)$, is isomorphic to $\Z^n$. 
Under the diffeomorphism $M\cong U\times \C^{*}$, the arrangement group 
splits as $\pi_1(M)\cong \pi_1(U)\times \Z$, with the $\Z$ factor 
corresponding to the class of a curve looping once around all hyperplanes. 

\begin{example}
\label{ex:config}
Let $\A=\A(K_n)$ be the braid arrangement from Example \ref{ex:graphic-mat}. 
The intersection lattice is the lattice of partitions of the set $[n]$, 
with the opposite order.  The complement $M$ is the configuration space 
of $n$ ordered points in $\C$, which is a classifying space for 
the pure braid group on $n$ strings, $P_{n}$. 
\end{example} 

For the purpose of computing the group $G(\A)=\pi_1(M(\A))$, it is enough to assume 
that the arrangement $\A$ lives in $\C^3$, in which case $\bar{\A}=\P(\A)$ 
is an arrangement of (projective) lines in $\CP^2$. This is clear when the 
rank of $\A$ is at most $3$, and may be achieved otherwise 
by taking a generic $3$-slice, an operation which does not
change either the poset $L_{\le 2}(\A)$, or the group $G(\A)$, 
by the Lefschetz-type theorem of Hamm and L\^{e}.
For a rank-$3$ arrangement, the set $L_1(\A)$ is in one-to-one correspondence 
with the lines of $\bar{\A}$, while $L_2(\A)$ is in $1$-to-$1$ correspondence 
with the intersection points of $\bar{\A}$.  Moreover, the poset structure of $L_{\le 2}(\A)$ 
mirrors the incidence structure of the point-line configuration $\bar{\A}$.   

\subsection{Lie algebras associated to groups}
\label{subsec:lcs}
Before proceeding, we review several constructions that associate 
to a group $G$ some inter-connected graded Lie algebras. For 
simplicity, and since is the context relevant to us here, we 
will assume that $G$ is finitely generated, and that its abelianization 
$G_{\ab}$ is torsion-free.

Given subgroups $H$ and $K$ of $G$, 
define their commutator, $[H,K]$, to be the subgroup of $G$ 
generated by all elements of the form $[a,b]=aba^{-1}b^{-1}$ with 
$a\in H$ and $b\in K$. The lower central series (LCS) of $G$ is 
defined inductively by setting $\gamma_1(G)=G$ and 
$\gamma_{r+1}(G)=[G,\gamma_r(G)]$. It is readily verified 
that $[\gamma_r(G),\gamma_s(G)]\subset \gamma_{r+s}(G)$ for all $r,s\ge 1$. It 
follows that the terms of the series are normal subgroups of $G$. 
Furthermore, each quotient group $\gr_r(G)\coloneqq \gamma_r(G)/\gamma_{r+1}(G)$ 
lies in the center of $G/\gamma_{r+1}(G)$, and thus is abelian. 
The direct sum of these quotients, 
\begin{equation}
\label{eq:gr-G}
\gr(G) \coloneqq \bigoplus_{r\ge 1} \gr_r(G),
\end{equation}
acquires the structure of a graded Lie algebra, with the Lie bracket map 
$[\:,\:]\colon \gr_{r}(G) \otimes \gr_{s}(G)\to \gr_{r+s}(G)$ is induced 
from the group commutator.
By construction, the {\em associated graded Lie algebra}\/ $\gr(G)$ 
is generated by its degree $1$ piece, $\gr_1(G)=G_{\ab}$. 
Therefore, since $G_{\ab}$ is finitely generated, the LCS quotients 
$\gr_r(G)$ are finitely generated abelian groups; we will denote  
by $\phi_r(G)$ their ranks.

Replacing in this construction the group $G$ by its maximal 
metabelian quotient, $G/G''$, leads to the {\em Chen Lie algebra}\/  
$\gr(G/G'')$ and the Chen ranks $\theta_r(G)\coloneqq \rank \gr_r(G/G'')$. 
It follows from the definitions that $\theta_r(G)\le \phi_r(G)$ for all $r\ge 1$, with 
equality for $r\le 3$. 

Proceeding in a manner similar to that in 
\S\ref{subsec:holo}, we also consider the {\em holonomy Lie algebra}\/ 
of $G$. This is a finitely presented, quadratic Lie algebra, defined as 
\begin{equation}
\label{eq:holo-G}
\h(G) \coloneqq \Lie (G_{\ab})/ \ideal \big(\! \im (\cup_G^{\vee}) \big),
\end{equation}
where $\cup_G^{\vee}\colon H_2(G;\Z)\to H_1(G;\Z)\wedge H_1(G;\Z)=G_{\ab}\wedge G_{\ab}$ 
is the $\Z$-dual of the cup-product map $H^1(G;\Z)\wedge H^1(G;\Z) \to H^2(G;\Z)$. 
Alternatively, $\cup_G^{\vee}$ may be identified with the homomorphism $\ab_*\colon 
H_2(G;\Z)\to H_2(G_{\ab};\Z)$ induced in homology by the abelianization map, 
$\ab\colon G\to G_{\ab}$. 

The rational holonomy Lie algebra of a finite CW-complex $X$ 
(with fundamental group $G$) was first defined by Chen in \cite{Chen73}. 
A notable fact about the holonomy Lie algebra 
is its close relationship to the associated graded Lie algebra of $G$. 
In \cite{MP92}, Markl and Papadima showed that the canonical 
projection $\Lie(G_{\ab})\surj \gr(G)$ descends to an epimorphism 
of graded Lie algebras $\Psi\colon \h(G) \surj \gr(G)$, which is an  
isomorphism in degrees $1$ and $2$ (see \cite{SW-jpaa19, SW-forum} for 
more on this). In \cite{PS-imrn04}, Papadima 
and Suciu showed that the map $\Psi$ further descends to an epimorphism 
$\overline{\Psi}\colon \h(G)/\h(G)''\surj \gr(G/G'')$.  

When the group $G$ is $1$-formal, the maps 
$\Psi\otimes \Q$  and $\overline{\Psi} \otimes \Q$ 
are all isomorphisms, as shown in \cite{Sullivan} and \cite{PS-imrn04}, 
respectively. 
In general, though, the map $\Psi\otimes \Q$ fails to be injective, even in 
degree $3$. Nevertheless, as shown by Porter and Suciu \cite{PS-ejm20},  
if the map $\cup_G^{\vee}$ is injective, then the map 
$\Psi_3\colon \h_3(G) \to \gr_3(G)$ is an isomorphism. 

\subsection{Lie algebras associated to arrangements}
\label{susbec:lie-arr}

Let $\A$ be a hyperplane arrangement and let $G=G(\A)$ be the 
fundamental group of the complement $M=M(\A)$.  All the results 
relating the holonomy Lie algebra $\h(G)$ to the associated graded Lie 
algebra $\gr(G)$ mentioned in \S\ref{subsec:lcs} apply in this setting. 
Indeed:
\begin{itemize}
\item  The group $G$ is finitely presented. Moreover, its abelianization 
$G_{\ab}$ is isomorphic to $\Z^n$, where $n=\abs{\A}$, and thus is 
torsion-free. 

\item Since the space $M$ is formal, the group $G=\pi_1(M)$ is $1$-formal. 

\item 
Any classifying map $M\to K(G,1)$ induces a ring isomorphism 
$H^{\le 2}(G;\Z) \isom H^{\le 2} (M;\Z)$, see \cite{Ra97, MS-aspm}. 
Since the ring 
$H^*(M;\Z)$ is generated in degree $1$, it follows that the map 
$\cup_G\colon  H^1(G;\Z)\wedge H^1(G;\Z)\to H^2 (G;\Z)$ is 
surjective, and thus its $\Z$-dual, $\cup_G^{\vee}$, is injective. 
\end{itemize}

Now let $\M=\M(\A)$ be the matroid associated to $\A$, and let $\OS=\OS(\M)$ 
be its Orlik--Solomon algebra. From the description of the cohomology algebra 
of the complement $M=M(\A)$ given in \cite{OS}, it follows that 
$H^{\le 2}(M;\Z) \cong \OS^{\le 2}$, 
and thus $H^{\le 2}(G;\Z)\cong \OS^{\le 2}$. Therefore, $\h(G)$ is isomorphic 
to $\h(\M)$, and admits Kohno's presentation from \eqref{eq:holo-pres}. 

Putting these observations together, we obtain the following result.
\begin{theorem}
\label{thm:comparison_map}
Let $\A$ be a hyperplane arrangement, let $\M=\M(\A)$ be its associated matroid, 
and let $G=G(\A)$ be the arrangement group. Then:
\begin{enumerate}
\item \label{epi1}
There is an epimorphism of graded Lie algebras, $\Psi \colon \h(\M) \surj \gr(G)$ 
which induces isomorphisms in degrees $1$, $2$, and $3$. 
\item  \label{epi2}
The map $\Psi$ descends to an epimorphism 
$\overline{\Psi}\colon \h(\M)/\h(\M)'' \surj \gr(G/G'')$
which induces isomorphisms in degrees $1$, $2$, and $3$. 
\item  \label{epi3}
The maps $\Psi\otimes\Q$ and $\overline{\Psi}\otimes\Q$ 
are both isomorphisms.
\end{enumerate}
\end{theorem}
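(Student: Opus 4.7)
The plan is to assemble the three claims from the general machinery recalled in \S\ref{subsec:lcs} together with the three structural properties of the arrangement group $G=G(\A)$ listed just before the theorem: $G$ is finitely presented with torsion-free abelianization $G_{\ab}\cong \Z^n$; $G$ is $1$-formal (inherited from the formality of $M(\A)$ established via Brieskorn's $1$-forms $\omega_H$); and there is a ring isomorphism $H^{\le 2}(G;\Z)\cong \OS^{\le 2}(\M)$, with cup product $\cup_G$ surjective (since $H^*(M;\Z)$ is generated in degree $1$), so that its $\Z$-dual $\cup_G^{\vee}$ is injective.

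For part \ref{epi1}, I would first use the isomorphism $H^{\le 2}(G;\Z)\cong \OS^{\le 2}(\M)$ to identify the group-theoretic holonomy Lie algebra $\h(G)$ from \eqref{eq:holo-G} with the combinatorial one $\h(\M)$ from \eqref{eq:holo-pres}: the relation ideals of both presentations are cut out by $\cup_G^{\vee}$, which depends only on the truncation $L_{\le 2}(\M)$. The Markl--Papadima theorem \cite{MP92} then supplies an epimorphism $\Psi\colon \h(\M)\cong \h(G)\surj \gr(G)$ that is an isomorphism in degrees $1$ and $2$. To push this through degree $3$, I would invoke the refinement of Porter--Suciu \cite{PS-ejm20}: injectivity of $\cup_G^{\vee}$ is precisely the hypothesis that forces $\Psi_3\colon \h_3(G)\to \gr_3(G)$ to be an isomorphism.

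For part \ref{epi2}, the existence of the induced map $\overline{\Psi}\colon \h(\M)/\h(\M)''\surj \gr(G/G'')$ is the content of the descent statement from \cite{PS-imrn04}, so I would just quote it. The low-degree isomorphism claim then follows from part \ref{epi1}: since $\h(\M)$ is generated in degree $1$, its derived subalgebra $\h'$ sits in degrees $\ge 2$ and $\h''$ sits in degrees $\ge 4$; likewise $\gr(G/G'')$ coincides with $\gr(G)$ in degrees $\le 3$, because group commutators of weight $\le 3$ cannot lie in $G''$. Hence $\overline{\Psi}_r=\Psi_r$ for $r\le 3$, and these are isomorphisms by part \ref{epi1}.

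For part \ref{epi3}, the $1$-formality of $G$ feeds into Sullivan's theorem \cite{Sullivan} to give that $\Psi\otimes \Q$ is an isomorphism in all degrees, and into the main result of \cite{PS-imrn04} to give the same for $\overline{\Psi}\otimes \Q$. The anticipated main obstacle is the degree-$3$ claim in \ref{epi1}: over $\Z$, the comparison map $\Psi$ may fail to be injective in degree $3$ for general finitely generated $1$-formal groups, so the argument must genuinely use the injectivity of the cup-product dual $\cup_G^{\vee}$, which is the arrangement-specific (or more generally, degree-$1$-generated cohomology) feature that rescues the statement.
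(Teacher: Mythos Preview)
Your proposal is correct and follows essentially the same approach as the paper: the theorem is stated right after the three bullet points (finite presentation with torsion-free abelianization, $1$-formality, and injectivity of $\cup_G^{\vee}$) and the identification $\h(G)\cong\h(\M)$, with the proof being the phrase ``Putting these observations together, we obtain the following result.'' You have simply unpacked that phrase, invoking Markl--Papadima for degrees $1$ and $2$, Porter--Suciu for degree $3$ via the injective $\cup_G^{\vee}$, Papadima--Suciu for the descent to $\overline{\Psi}$, and Sullivan plus Papadima--Suciu for the rational isomorphisms---and your added observation that $\h''\subset\bigoplus_{r\ge 4}\h_r$ and $G''\subset\gamma_4(G)$ makes explicit why part~\ref{epi2} in degrees $\le 3$ reduces to part~\ref{epi1}.
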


As a consequence, the Lie algebras $\gr(G)\otimes \Q$ and $\gr(G/G'')\otimes \Q$ 
are determined by the (truncated) intersection lattice $L_{\le 2}(\A)=L_{\le 2}(\M)$. 
Moreover, the LCS ranks $\phi_r(G)=\dim_{\Q} \gr_r(G)\otimes \Q$ and 
the Chen ranks $\theta_r(G)=\dim_{\Q} \gr_r(G/G'')\otimes \Q$ are equal 
to the corresponding holonomy ranks $\phi_r(\M) = \dim_{\Q} \h_r(\M)\otimes \Q$ and 
holonomy Chen ranks $\theta_r(\M) = \dim_{\Q} \big(\h(\M)/\h(\M)''\big)_r \otimes \Q$.

The above theorem naturally raises the following question.

\begin{problem}
\label{prob:holo-lcs}
Let $\A$ be a hyperplane arrangement, let $\M=\M(\A)$ be its associated matroid, 
and let $G=G(\A)$ be the arrangement group. 
Given a field $\k$ of characteristic $p>0$ and an integer $r>3$, 
determine whether the $\k$-linear maps 
\begin{align*}
&\begin{tikzcd}[ampersand replacement=\&, column sep=20pt]
\Psi \otimes \k \colon \h_r(\M) \otimes \k \ar[r, two heads] \& \gr_r(G)\otimes \k \end{tikzcd}\\[-3pt]
&\begin{tikzcd}[ampersand replacement=\&, column sep=20pt]\overline{\Psi}  \otimes \k \colon \big(\h(\M)/\h(\M)''\big)_r  \otimes \k  \ar[r, two heads] \&
\gr_r(G/G'') \otimes \k \end{tikzcd}
\end{align*}
are isomorphisms. 
\end{problem}

If the arrangement $\A$ is decomposable over $\Z$, then the maps 
$\Psi$ and $\overline{\Psi}$ are isomorphisms and all the Lie algebras 
involved are torsion-free, see \cite{PS-cmh06}. Furthermore, 
if $\A$ is decomposable over a field $\k$, similar proofs 
show that the maps $\Psi\otimes\k$ and $\overline{\Psi}\otimes\k$ 
are both isomorphisms. In general, though, these maps are not 
isomorphisms, as the next example illustrates.

\begin{example}
\label{ex:non-fano}
Let $\A$ be the non-Fano arrangement in $\C^3$, with defining polynomial 
$Q=xyz(x-y)(x-z)(y-z)(x+y-z)$. 
As noted in \cite{Su-conm01}, the first few LCS quotients are torsion-free, 
of ranks $\phi_1=7$, $\phi_2=6$, $\phi_3=17$, $\phi_4=42$, etc, 
while the Chen groups are all torsion-free, of ranks $\theta_{r}=9(r-1)$ 
for $r\ge 4$. On the other hand, the map $\Psi\otimes \Z_2$ 
has kernel equal to $\Z_2$ in degree $4$, generated by the quadruple Lie bracket 
$[x_7, [x_6 , [x_5, x_4]]]$, where $x_i$ is the generator  corresponding 
to the $i$-th hyperplane. Furthermore, the map $\overline{\Psi}\otimes \Z_2$ 
has also kernel equal to $\Z_2$ in degree $4$.
\end{example}

In \cite{Su-conm01}, we gave examples of arrangements $\A$ for which the 
associated graded Lie algebra $\gr(G(\A))$ may have non-trivial torsion. 
In view of this phenomenon, we asked whether such torsion is combinatorially determined. 
The question was answered in the negative by Artal Bartolo, Guerville-Ball\'e, 
and Viu-Sos \cite{AGV}, who constructed a pair $\A^{+}$ and $\A^{-}$ 
of lattice-isomorphic arrangements of $13$ hyperplanes in $\C^3$ 
for which $\gr_4(G(\A^+))\not\cong \gr_4(G(\A^-))$, with the 
difference between these two groups being detected by their $2$-torsion subgroups. 

\subsection{Resonance varieties of arrangements}
\label{subsec:res-hyparr}

The resonance varieties of an arrangement complement 
(over the coefficient field $\k=\C$) 
were first defined and studied by Falk in \cite{Fa97}.  An intense 
period of activity followed, in which the structure of these varieties 
was investigated in works such as \cite{CS-camb99, LY00, EPY03, DPS-duke, 
PY08, Yu09, Bu11, DeS-plms, De16, DSY17}, and extensions to coefficient fields of 
positive characteristic were explored in \cite{MS-aspm, Fa07, PS-plms17}. We review 
here some of those results. 

Let $\A$ be a complex hyperplane arrangement, with complement $M=M(\A)$. 
The resonance varieties of $M$ (over a coefficient field $\k$) 
are defined as the resonance varieties of the Orlik--Solomon $\k$-algebra of the 
corresponding matroid $\M=\M(\A)$; that is, $\RR^q_s(M,\k)\coloneqq \RR^q_s(\M,\k)$.  
From the general theory reviewed in \S\ref{subsec:res cga}, we know that each of 
these sets is a homogeneous subvariety of the affine space $V_{\k}=\OS^1(\M)\cong \k^n$, 
where $n=\abs{\A}$. 
In fact, if we fix a linear order on the hyperplanes, $\A=\{H_1,\dots, H_n\}$, 
all the resonance varieties lie in the hyperplane 
\begin{equation}
\label{eq:Vk}
\overline{V}_{\k} =\{ x \in \k^n : x_1 + \cdots + x_n=0\} . 
\end{equation}

The description of the Orlik--Solomon algebra given in 
\S\ref{subsec:OS} makes it clear that the resonance 
varieties $\RR^q_s(M(\A),\k)$ depend only on the 
intersection lattice, $L(\A)$, and on the characteristic 
of the field $\k$.  A basic problem in the subject is to 
find concrete formulas making this dependence explicit. 
One important piece of knowledge in this direction is the following result, 
which is a direct consequence of the formality of arrangement complements 
and of the Tangent Cone theorem of Dimca, Papadima, and Suciu 
\cite{DPS-duke} (in degree $q=1$) and Dimca and Papadima \cite{DP-ccm} 
(in arbitrary degrees). 

\begin{theorem}
\label{thm:res-arr-linear}
For any hyperplane arrangement $\A$, all the resonance varieties 
$\RR^q_s(M(\A),\C)$ are finite unions of rationally defined linear subspaces 
of $H^1(M(\A);\C)$.
\end{theorem}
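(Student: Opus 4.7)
The plan is to derive both linearity and rational definition by combining three ingredients, the first two of which are already flagged in the statement: the formality of $M=M(\A)$, the Tangent Cone theorem in its degree-one and higher-degree forms, and the Arapura-type structure theorem for characteristic varieties of smooth quasi-projective varieties.

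First, recall from Brieskorn's identification of the subalgebra of $\Omega^*_{\rm dR}(M)$ generated by the logarithmic forms $\omega_H=\frac{1}{2\pi\ii}\,d\log\alpha_H$ with $H^*(M;\R)$, together with Sullivan's formality criterion, that $M$ is formal, and in particular $1$-formal. Let $\V^q_s(M)\subset H^1(M;\C^*)$ denote the characteristic varieties of $M$, i.e., the jump loci of the cohomology of rank-one $\C$-local systems. By Arapura's theorem for smooth quasi-projective varieties (in degree $q=1$), together with its extensions to higher degrees (Budur--Wang and related work), each $\V^q_s(M)$ is a finite union of torsion-translated algebraic subtori of the character torus $H^1(M;\C^*)$, all defined over $\Q$.

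Next, I would invoke the Tangent Cone theorem: for a formal space $X$, the exponential map $\exp\colon H^1(X;\C)\to H^1(X;\C^*)$ restricts to an analytic isomorphism of germs between $(\RR^q_s(X,\C),0)$ and $(\V^q_s(X),1)$. This is the Dimca--Papadima--Suciu statement for $q=1$, which uses the $1$-minimal model, and the Dimca--Papadima extension for arbitrary $q$, which uses the full Sullivan minimal model. Consequently, $\RR^q_s(M,\C)$ coincides with the tangent cone at the identity of $\V^q_s(M)$.

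To finish, I would observe that a torsion translate $\rho T$ of an algebraic subtorus $T\subset H^1(M;\C^*)$ contains the identity if and only if $\rho\in T$, in which case $\rho T=T$; proper torsion translates contribute nothing to the tangent cone at $1$. Hence the tangent cone of $\V^q_s(M)$ at $1$ is $\bigcup_T \Lie(T)$, where $T$ ranges over those components of $\V^q_s(M)$ that are honest subtori, and each $\Lie(T)$ is a linear subspace of $H^1(M;\C)$ defined over $\Q$, the rational structure being inherited from the natural $\Z$-structure $H_1(M;\Z)$ on the cocharacter lattice. The main obstacle is invoking the higher-degree form of the Tangent Cone theorem: unlike the degree-one case, which reduces cleanly to Goldman--Millson deformation theory applied to the $1$-minimal model, the higher-degree version requires a careful comparison of Maurer--Cartan jump loci for the full minimal model with cohomological jump loci, a step whose technical core lies outside arrangement theory proper. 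Once this ingredient is granted, the remainder of the argument is essentially bookkeeping on the lattice of subtori through the identity.
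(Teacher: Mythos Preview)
Your proposal is correct and follows essentially the same route the paper indicates: formality of $M(\A)$ combined with the Tangent Cone theorem of \cite{DPS-duke} (for $q=1$) and \cite{DP-ccm} (for arbitrary $q$). The paper simply cites these two ingredients and states the theorem as a direct consequence, whereas you have unpacked the argument by making explicit the third ingredient---the Arapura/Budur--Wang structure theorem for characteristic varieties of smooth quasi-projective varieties---that underlies those references and is needed to pass from ``tangent cone at $1$ of $\V^q_s(M)$'' to ``finite union of rationally defined linear subspaces.''
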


Alternative proofs of this theorem in degree $q=1$ 
were first given in \cite{CS-camb99} and \cite{LY00}, 
using very different methods; see \S\ref{subsec:res-hyparr-deg1} 
for more on this. All these proofs use in an essential way geometric and topological 
methods, so this naturally raises the question whether the resonance 
varieties of non-realizable matroids are linear (in characteristic $0$). 
In degree $q=1$, we saw in Theorem \ref{thm:res fy} that this is indeed the case, 
but in higher degrees the question remains wide open.

\begin{question}
\label{quest:res-mat-lin}
Let $\M$ be a non-realizable simple matroid. If $q\ge 2$, are all the 
irreducible components of $\RR^q(\M,\C)$ (rationally defined) linear subspaces?
\end{question}

As highlighted in the works of Denham, Suciu, and Yuzvinsky \cite{DSY16, DSY17}, 
arrangement complements enjoy an important topological property 
that has a variety of implications regarding their homology groups and their 
cohomology jump loci. Let $X$ be a space having the homotopy type of 
a connected, finite CW-complex of dimension $d$, and let $G=\pi_1(X)$. 
We say that $X$ is an {\em abelian duality space}\/ of dimension $d$ if 
$H^q(X;\Z[G_{\ab}])=0$ for $q\ne d$ and $H^d(X;\Z[G_{\ab}])$ is non-zero 
and torsion-free. 

\begin{theorem}[\cite{DSY16, DSY17}]
\label{thm:arr-prop}
Let $\A$ be a central arrangement of rank $\ell$. Then:
\begin{enumerate}[label=(\arabic*)] 
\item \label{ab1}
The complement $M=M(\A)$ is an abelian duality space of 
dimension $\ell$ and the projectivized complement 
$U=\P(M)$ is an abelian duality space of dimension $\ell-1$.
\item \label{ab2}
$\RR^1(M,\k)\subseteq  \cdots\subseteq\RR^{\ell}(M,\k)$ 
and  
$\RR^1(U,\k)\subseteq  \cdots\subseteq\RR^{\ell-1}(U,\k)$, 
for any field $\k$.
\end{enumerate}
\end{theorem}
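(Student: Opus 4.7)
The plan proceeds in three stages: establish abelian duality for $M$, deduce it for $U$, and derive the propagation statement in part \ref{ab2} from part \ref{ab1}.

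For part \ref{ab1} applied to $M$, I would use the Dimca--Papadima theorem that the arrangement complement is homotopy equivalent to a minimal CW-complex with exactly $b_q(M)$ cells in dimension $q$. The cellular cochain complex of the universal abelian cover $\widetilde{M}^{\ab}$ is then a complex of free $\Z[H_1(M)]$-modules whose integral specialization at the trivial character recovers $H^*(M;\Z)$. By the formality of $M$ and the Kohno--Esnault--Schechtman--Viehweg comparison, this complex can be identified (after tensoring with an appropriate localization) with the integral lift of the universal Aomoto complex $\mathbf{L}(\OS(\M))$ from \eqref{eq:univ aomoto}, regarded over the group ring rather than the symmetric algebra. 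The crux is then to prove that $H^q(M; \Z[H_1(M)])$ vanishes for $q<\ell$ and is torsion-free in degree $\ell$. For this, I would invoke Folkman's theorem that the proper part of the geometric lattice $L(\A)$ is Cohen--Macaulay and combine it with the Goresky--MacPherson formula (applied to the universal abelian cover), or equivalently with Bj\"orner--Ziegler's broken-circuit basis, which gives the cochain complex an upper-triangular ``NBC'' structure forcing vanishing in low degrees and torsion-freeness on top.

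For $U$, the splitting $M \cong U \times \C^*$ gives $H_1(M) \cong H_1(U) \oplus \Z$, so the universal abelian cover factors through that of $\C^*$. Since $\C^*$ is a $K(\Z,1)$ and hence an abelian duality space of dimension $1$, and since the abelian duality property is multiplicative under products with dimensions adding (a K\"unneth argument with coefficients in $\Z[H_1(M)] \cong \Z[H_1(U)] \otimes \Z[\Z]$), it follows that $U$ is an abelian duality space of dimension $\ell-1$.

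For part \ref{ab2}, I would apply the general principle that abelian duality of dimension $d$ forces the depth-$1$ resonance varieties to propagate: $\RR^q(X,\k) \subseteq \RR^{q+1}(X,\k)$ for $1 \le q < d$. Indeed, abelian duality makes the universal Aomoto complex over $S_{\k} = \Sym(H^1(X;\k)^{\vee})$ a free resolution of the single $S_{\k}$-module $H^d(X; \k \otimes \Z[H_1(X)])$ placed in degree $d$. Consequently, for $a \in H^1(X;\k)$, the equality $\rank \delta_a^{q-1} + \rank \delta_a^q = b_q(X)$ (equivalent to $a \notin \RR^q(X,\k)$) together with exactness of the universal complex away from degree $d$ forces any rank drop in $\delta_a^q$ to trigger a rank drop in $\delta_a^{q+1}$. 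Applying this to $M$ (dimension $\ell$) and $U$ (dimension $\ell-1$) yields both chains of inclusions.

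The main obstacle is the vanishing and torsion-freeness in part \ref{ab1}: translating the combinatorial Cohen--Macaulay input into a twisted-cohomology statement. The cleanest route I know passes through the minimal cellular model combined with an explicit analysis of the boundary maps at the generic point of $\Spec \Z[H_1(M)]$, using NBC bases to keep track of the filtration. Alternative routes via wonderful or toric compactifications are more geometric but rely on realizability, so for the statement at hand the combinatorial-lattice approach is the most robust. Once part \ref{ab1} is in hand, the propagation in part \ref{ab2} is essentially formal, via the characterization of $\RR^q$ through ranks of the universal differentials.
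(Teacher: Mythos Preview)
The paper does not prove this theorem; it simply cites \cite{DSY16, DSY17} and moves on. So there is no ``paper's own proof'' to compare against, only the original sources. That said, your sketch has genuine gaps relative to those sources.

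For part~\ref{ab1}, the argument in \cite{DSY16} does not go through formality or an identification of the equivariant cochain complex with the Aomoto complex. These are complexes over different rings: the universal abelian cover yields a complex of free $\Z[G_{\ab}]$-modules, whereas the universal Aomoto complex $\mathbf{L}(A)$ lives over $S=\Sym(H_1)$. Formality and the Kohno--Esnault--Schechtman--Viehweg comparison relate these only after rational completion at the augmentation ideal, and certainly do not deliver the \emph{integral} torsion-freeness you need for abelian duality. The actual mechanism in \cite{DSY16} is a combinatorial open cover of $U(\A)$ indexed by the intersection lattice, together with a Mayer--Vietoris--type spectral sequence; Folkman's Cohen--Macaulayness of the order complex of $L(\A)$ then forces the vanishing of $H^q(U;\Z[G_{\ab}])$ for $q<\ell-1$. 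Your invocation of Folkman is on target, but ``Goresky--MacPherson applied to the universal abelian cover'' and an ``NBC upper-triangular structure'' are not how the argument runs and would not produce the integral statement.

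For part~\ref{ab2}, you write that ``abelian duality makes the universal Aomoto complex over $S_{\k}$ a free resolution.'' This conflates the group-ring and polynomial-ring settings. Abelian duality says the cochain complex over $\Z[G_{\ab}]$ is acyclic except in top degree; to transfer this to the Aomoto complex over $S_{\k}$ one must pass to the associated graded for the augmentation-ideal filtration (so that $\gr\Z[G_{\ab}]\cong S$), and check that acyclicity survives---a nontrivial step carried out carefully in \cite{DSY17}. Once that passage is made, your rank-drop argument for propagation (via local Betti numbers of a single module having no gaps) is essentially the right one. Your handling of $U$ via the splitting $M\cong U\times\C^*$ and multiplicativity of abelian duality is correct.
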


The result from \ref{r2} recovers by means 
of topological considerations the result of \cite{EPY03}  
regarding propagation of resonance for OS-algebras (see Corollary \ref{cor:res-mat}).
The diffeomorphism $M \cong U\times \C^*$, together with the 
product formula from Proposition \ref{prop:resprod} yields an identification 
$\RR^q(M,\k)\cong \RR^q(U,\k)\cup \RR^{q-1}(U,\k)$ for all $q\ge 1$.  
Applying Theorem \ref{thm:arr-prop}, part \ref{r2}, we conclude that 
$\RR^q(M,\k)\cong \RR^q(U,\k)$ for all $q\ge 1$.

We now return to the linear upper envelope for resonance described in 
Theorem \ref{thm:res-bound}.  As shown by Denham in \cite[Theorem~6]{De16}, 
this bound can be sharpened in the realizable case, as follows.  
Given a matroid $\M$ on ground set $[n]$, we say that  
a subset of flats $\CC\subset L(\M)$ covers $\M$ if
there is a surjective function $f\colon[n]\to\CC$ for which $i\in f(i)$ for
all $1\leq i\le n$.  

\begin{theorem}[\cite{De16}]
\label{thm:arr-res-bound}
Let $\A$ be an arrangement of $n$ hyperplanes in $\C^{\ell}$, with complement $M=M(\A)$ 
and associated matroid $\M=\M(\A)$. Then, for all fields $\k$ and all $q\ge 0$,
\[
\RR^q(M,\k) \subseteq \cS^q(\M, \k) \coloneqq \bigcup_{\CC} \bigcap_{X\in\CC} P_{\{X,[n]\setminus X\}} ,
\]
where $P_{\pi}$  is the subspace from \eqref{eq:P-pi} corresponding to a partition 
$\pi$ of $[n]$ and the union is over all subsets $\CC\subseteq L^{\irr}_{\le q+1}(\M)$ 
that cover $\M$.
\end{theorem}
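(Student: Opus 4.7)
The plan is to sharpen the linear envelope of Theorem \ref{thm:res-bound}---which places any nonzero $a \in \RR^q(M,\k)$ inside a single subspace $P_{\{X,[n]\setminus X\}}$ for some $X \in L^{\irr}_{\le q+1}(\M)$---by leveraging the realizability of $\M$ via the abelian duality property of the complement $M(\A)$ established in Theorem \ref{thm:arr-prop}. Concretely, given $a \in \RR^q(M,\k)$, I would exhibit a covering family $\CC \subseteq L^{\irr}_{\le q+1}(\M)$ of $\M$ and verify that the equation $\sum_{i \in X} a_i = 0$ holds simultaneously for every $X \in \CC$, placing $a$ inside $\bigcap_{X \in \CC} P_{\{X,[n]\setminus X\}} \subseteq \cS^q(\M,\k)$.

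The starting point is the Aomoto characterization from \eqref{eq:rika}: $a \in \RR^q(M,\k)$ iff $H^q(A_{\k},\delta_a) \ne 0$. A representing cocycle $\omega \in A^q_{\k}$ would then be decomposed using the Brieskorn splitting $A^q_{\k} = \bigoplus_{X \in L_q(\M)} A^q_{\k}(\M_X)$, which for arrangements is compatible with the flag filtration of the Aomoto complex studied by Schechtman and Varchenko. Under this decomposition, contributions to $[\omega]$ are indexed by flags of flats $X_1 < \cdots < X_q$, and the local Aomoto obstruction at each $X_j$ forces an equation of the form $\sum_{i \in X_j} a_i = 0$. The same analysis for arbitrary matroids yields the weaker envelope of Theorem \ref{thm:res-bound}; the new input in the realizable case is that the irreducible flats contributing to such flags must, together, cover the ground set $[n]$.

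The main obstacle is precisely this cover-extraction step, and it is where the abelian duality property of $M(\A)$ from Theorem \ref{thm:arr-prop}(1) becomes decisive. By abelian duality, twisted cohomology in degree $q$ is controlled by a dualizing module whose support must register every hyperplane of $\A$ in a coherent fashion. Translating this condition through the BGG correspondence \eqref{eq:univ aomoto} and the Koszul-module presentation \eqref{eq:pres-w1a} (together with analogous presentations in higher degrees, as envisioned by Problem \ref{prob:high-koszul}), one should be able to promote the ``individual flat'' constraints of the weaker envelope into the simultaneous covering condition. The delicate part, and the reason the argument cannot be transferred verbatim to nonrealizable matroids, is ruling out the possibility that some hyperplane $H_i$ is missed by every irreducible flat of rank $\le q+1$ appearing in the flag support of $\omega$; this is precluded by the duality/Poincar\'e--Lefschetz identifications available for $M(\A)$ but not for general matroids. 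Once the cover $\CC$ has been assembled and the equations $\sum_{i \in X} a_i = 0$ verified for each $X \in \CC$, the inclusion $\RR^q(M,\k) \subseteq \cS^q(\M,\k)$ follows at once.
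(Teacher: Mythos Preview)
The paper does not give its own proof of this theorem; it is quoted from Denham \cite{De16}, and the only hint it offers about the argument is that it ``depends on the main result of \cite{CDO03}, which requires the matroid $\M$ to have a complex realization.'' Your setup through the Brieskorn decomposition and the Schechtman--Varchenko flag analysis is indeed how Denham obtains the weaker envelope of Theorem~\ref{thm:res-bound}, and you are right that the entire content of the realizable sharpening lies in the cover-extraction step. But the tool Denham uses for that step is not abelian duality: it is the Cohen--Dimca--Orlik nonresonance criterion, which says that if the weights satisfy $\sum_{i\in X} a_i \ne 0$ for every $X$ in a covering family of irreducible flats, then the (twisted) cohomology of $M(\A)$ vanishes below top degree. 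Contrapositively, $H^q(A_\k,\delta_a)\ne 0$ with $q<\ell$ forces every such covering family to contain a resonant flat, and Denham's argument then produces a cover $\CC\subseteq L^{\irr}_{\le q+1}(\M)$ with $\sum_{i\in X}a_i=0$ for all $X\in\CC$. The covering condition is built into the hypothesis of \cite{CDO03}, not manufactured afterward.

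Your proposed substitute, abelian duality (Theorem~\ref{thm:arr-prop}), leaves a genuine gap. You assert that duality forces the flag support of a nontrivial Aomoto class to ``register every hyperplane,'' but no mechanism is given: the abelian-duality statement concerns $H^*(M;\Z[G_{\ab}])$, not the Aomoto complex, and the translation you sketch routes through ``analogous presentations in higher degrees, as envisioned by Problem~\ref{prob:high-koszul}''---an \emph{open problem} in this very paper. Even granting those presentations, the decisive claim---that a hyperplane missed by every irreducible flat in the flag support of $\omega$ would contradict duality---is stated, not argued. The \cite{CDO03} route works precisely because its hypothesis \emph{is} a covering condition, so negating it hands you $\CC$ directly; your route must conjure a cover from a statement that never mentions one, and the proposal does not indicate how.
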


Note that the upper envelope $\cS^q(\M, \k)$ is the union of an arrangement 
of linear subspaces in $\overline{V}_{\k}$ which is completely determined 
by the intersection lattice $L(\A)=L(\M)$. Nevertheless, the proof of 
Theorem \ref{thm:arr-res-bound} depends on the main result of \cite{CDO03}, 
which requires the matroid $\M$ to have a complex realization. So this 
naturally raises the question whether this hypothesis can be dispensed with.

\begin{question}[\cite{De16}]
\label{quest:better-res-bound}
Do the inclusions $\RR^q(\M,\k) \subseteq \cS^q(\M, \k)$ hold for all 
matroids $\M$, all fields $\k$, and all degrees $q\ge 2$?
\end{question}

To recap, this section has explored how properties such as linearity, propagation, 
and upper bounds constrain the resonance varieties of a matroid, rooted in its 
combinatorial structure. Yet, many of these results rely on the geometric realization 
of matroids as hyperplane arrangements, raising open questions about their validity 
for non-realizable matroids.  
This sets the stage for the next section, where the degree-$1$ resonance varieties 
$\RR^1_s(\M, \C)$ offer an illuminating case study, with the combinatorial data 
of multinets aligning closely with the geometric picture of arrangements.

\subsection{Resonance in degree $1$}
\label{subsec:res-hyparr-deg1}
Soon after the complex resonance varieties of an arrangement complement 
were introduced by Falk in \cite{Fa97}, the geometry of the varieties 
$\RR_s(\A)=\RR^1_s(M(\A),\C)$ was described in the work of 
Cohen--Suciu \cite{CS-camb99} and Libgober--Yuzvinsky \cite{LY00}. 
It was shown that $\RR_1(\A)$ consists of linear subspaces 
of the vector space $\overline{V}_{\C}\subset H^1(M;\C)$; 
that each irreducible component of $\RR_1(\A)$
is either $\{0\}$, or a linear subspace of $\overline{V}_{\C}$ 
of dimension at least $2$; and that two distinct components 
meet only at $0$. Moreover, $\RR_s(\A)$ is 
either $\{0\}$, or the union of all components of $\RR_1(\A)$ 
of dimension greater than $s$. 

The work of Falk and Yuzvinsky \cite{FY07} further clarified the structure of the 
variety $\RR_1(\A)$, by showing that the positive-dimensional 
components are in bijection with the multinets on sub-arrangements of $\A$.  
Although the same is true for arbitrary matroids (see Theorem \ref{thm:res fy}), 
the correspondence between resonance 
components and multinets can be described in geometric terms in the 
realizable case, as follows.

Let $\A$ be an arrangement in $\C^3$ with defining polynomial 
$f=\prod_{H\in \A} f_H$ and complement $M$. Given a $(k,d)$-multinet 
$\NN$ on $\A$, with parts $\A_i$ and multiplicity function $m\colon \A\to \N$, 
$H\mapsto m_H$, 
write $f_{i}=\prod_{H\in\A_{i}}f_H^{m_H}$ and define a rational map 
$\psi \colon \C^3\to\CP^1$ by $\psi(x)=\pcoor{f_1(x) : f_2(x)}$. 
There is then a set $D=\{ \pcoor{a_1 : b_1}, \dots , \pcoor{a_k : b_k}\}$ 
of $k$ distinct points in $\CP^1$
such that each of the degree $d$ polynomials $f_1,\dots, f_k$ 
can be written as $f_{i}=a_{i} f_2-b_{i}f_1$, and, 
furthermore, the image of $\psi\colon M\to\CP^1$ misses $D$.  
The map 
\begin{equation}
\label{eq:pencil}
\psi=\psi_{\NN} \colon M \to \CP^1\setminus D 
\end{equation}
is an orbifold fibration, known as the {\em pencil}\/ associated to the multinet $\NN$. 
Following \cite{PS-plms17, Su-toul}, we may describe the homomorphism 
induced in homology by this map, as follows. Let $\alpha_1,\dots ,\alpha_k$ 
be compatibly oriented simple closed curves on $S=\CP^1\setminus D$, going 
around the points of $D$, so that $H_1(S;\Z)$ is generated by the homology 
classes $c_{i}=[\alpha_{i}]$, subject to the single relation $\sum_{i=1}^k c_{i}=0$.   
Then the induced homomorphism $\psi_{*} \colon H_1(M;\Z) \to H_1(S;\Z)$ 
is given by $\psi_*(x_H) = m_H c_{i}$ for $H\in \A_{i}$, 
and thus $\psi^{*} \colon H^1(S;\Z) \to H^1(M;\Z)$ 
is given by $\psi^*(c_{i}^{\vee}) = w_i$, where 
$c_{i}^{\vee}$ is the Kronecker dual of $c_i$ 
and $w_i=\sum_{H\in \A_{i}} m_H e_H$.

It follows from the above discussion that the map 
$\psi^{*} \colon H^1(S;\C) \to H^1(M;\C)$ 
is injective, and thus sends $\RR^1_1(S)$ to $\RR^1_1(M)$.
Let us identify $\RR^1_1(S)$ with $H^1(S;\C)=\C^{k-1}$, and view 
$P_{\NN}\coloneqq \psi^*(H^1(S;\C))$ as lying inside 
$\RR_1(\A)\coloneqq \RR^1_1(M)$.  
Then $P_{\NN}$ is the $(k-1)$-dimensional linear 
subspace from \eqref{eq:pm}, 
spanned by the vectors $w_2-w_1,\dots , w_k-w_1$.  
Moreover, as shown in \cite[Theorems~2.4--2.5]{FY07}, this subspace is 
an essential component of $\RR_1(\A)$; that is, $P_{\NN}$ is not 
contained in any proper coordinate subspace of $H^1(M;\C)$.

\begin{remark}
\label{rem:bundle}
As shown in \cite[Corollary 4.3]{FY07}, equality holds in formula \eqref{eq:RH}  
if and only if the blocks of the multinet $\NN$ form all the singular fibers of the 
(projectivized) map $\bar\psi\colon \CP^2\to \CP^1$. In this case, the restriction 
$\bar\psi\colon \P(M)\to S=\CP^1\setminus \{\text{$k$ points}\}$ 
is a smooth fiber bundle with fiber a Riemann surface with some punctures, and 
therefore $\P(M)$ is aspherical.
\end{remark}

More generally, suppose there is a sub-arrangement 
$\BB\subseteq \A$ supporting a multinet $\NN$.   In this case, 
the inclusion $M(\A) \inj M(\BB)$ induces a 
monomorphism $H^1(M(\BB);\C) \inj H^1(M(\A);\C)$, 
which restricts to an embedding $\RR_1(\BB) \inj \RR_1(\A)$.  
The linear space $P_{\NN}$, then, 
lies inside $\RR_1(\BB)$, and thus, inside $\RR_1(\A)$.
Conversely, as shown in \cite[Theorem~2.5]{FY07} 
all (positive-dimensional) irreducible components 
of $\RR_1(\A)$ arise in this fashion. 

The next theorem, which combines results of Pereira--Yuzvinsky 
\cite{PY08} and Yuzvinsky \cite{Yu09}, summarizes what is known 
about the existence of non-trivial multinets on arrangements. 

\begin{theorem}[\cite{PY08, Yu09}] 
\label{thm:pyy}
Let $\NN$ be a $k$-multinet on an arrangement $\A$, 
with multiplicity function $m$ and base locus $\XX$. 
If $\abs{\XX}>1$, then $k=3$ or $4$.    
Furthermore, if there is a hyperplane $H\in \A$ such that $m_H>1$, 
then $k=3$. 
\end{theorem}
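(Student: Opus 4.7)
The plan is to exploit the pencil $\psi\colon M(\A) \to \CP^1 \setminus D$ associated to $\NN$ via \eqref{eq:pencil}, and extract the bound on $k$ from an Euler-characteristic computation carried out on a resolution of the rational map $\CP^2 \dashrightarrow \CP^1$ whose generic fibers are the projectivized level sets of the pencil.

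First I would blow up the base points of $\XX$ to obtain a smooth surface $\sigma\colon Y \to \CP^2$ on which the pencil extends to a morphism $\tilde\psi\colon Y \to \CP^1$. The $k$ special fibers of $\tilde\psi$ over $D$ correspond to the parts $\A_1,\dots,\A_k$ of $\NN$, while a generic fiber $F$ is a smooth curve of degree $d$ in $\CP^2$ meeting each flat $X \in \XX$ with local intersection multiplicity $n_X$. Computing $\chi(Y)$ in two ways --- directly as $3 + \abs{\XX}$ (since each blowup adds $1$ to $\chi(\CP^2) = 3$), and via the fibration as $(2-k)\chi(F) + \sum_{i=1}^k \chi(F_i)$ --- recovers the Riemann--Hurwitz type inequality \eqref{eq:RH}. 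Combining this with the identity $\sum_{X \in \XX} n_X^2 = d^2$ from item (iii) following the multinet axioms, the adjunction bound on $\chi(F)$, and the hypothesis $\abs{\XX} > 1$ (which prevents $F$ from degenerating and keeps it connected via a Bertini-type argument), the right-hand side of \eqref{eq:RH} becomes restrictive enough to force $k \leq 4$.

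For the sharper statement, assume $m_H > 1$ for some $H \in \A_\alpha$. Then the fiber $F_\alpha$ of $\tilde\psi$ is non-reduced along $\{f_H = 0\}$, so $\tilde\psi$ factors through an orbifold fibration over the base $\CP^1(m_1,\dots,m_k)$ with $m_i = \gcd\{m_H : H \in \A_i\}$, at least one of which is $\ge 2$. The orbifold Euler characteristic $\chi^{\mathrm{orb}}(\CP^1(m_1,\dots,m_k)) = 2 - \sum_i(1 - 1/m_i)$ introduces an additional multiplicity penalty into the fibration computation of $\chi(Y)$, and I expect that together with the constraints already used to establish $k \leq 4$, this extra term is incompatible with $k = 4$ unless every $m_i = 1$; hence $k = 3$.

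The hardest part will be the precise Euler-characteristic bookkeeping: one must carefully handle the contributions of the exceptional divisors over each $X \in \XX$ (weighted by $n_X$), the orbifold corrections coming from the $m_H > 1$, and the verification that the generic fiber $F$ is smooth and irreducible. This reliance on algebro-geometric tools specific to $\CP^2$ is precisely what makes the argument dependent on realizability, and it is the reason Question \ref{quest:EH} remains open for matroids lacking a complex realization.
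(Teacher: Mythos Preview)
The paper does not prove this theorem; it is quoted from \cite{PY08} and \cite{Yu09}, so the comparison is to those sources rather than to anything in the present survey.

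Your Euler-characteristic strategy has a concrete gap: carried out honestly, it yields only $k\le 5$, not $k\le 4$. For a $(k,d)$-net with parts in general position one has $\abs{\XX}=d^2$, $\chi(Y)=3+d^2$, $\chi(F_{\mathrm{gen}})=3d-d^2$ by adjunction, and $\chi(F_i)=2d-\binom{d}{2}$ for each of the $k$ completely reducible fibers. Feeding this into $\chi(Y)\ge (2-k)\chi(F_{\mathrm{gen}})+\sum_{i=1}^k\chi(F_i)$ (which is exactly \eqref{eq:RH} in this case) and simplifying gives $3(d-1)^2\ge \tfrac{1}{2}kd(d-1)$, i.e.\ $k\le 6-6/d$. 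For $d\le 5$ this does force $k\le 4$, but for $d\ge 6$ it allows $k=5$; the Hessian $(4,3)$-net sits exactly on the boundary. So the step ``the right-hand side of \eqref{eq:RH} becomes restrictive enough to force $k\le 4$'' fails as stated. This is consistent with the paper's own logic: \eqref{eq:RH} is recorded in \S\ref{subsec:multinets} as a consequence of the pencil construction, with no claim that it bounds $k$, and Question~\ref{quest:EH} asks whether it even holds for non-realizable matroids---a question that would be moot (via Kawahara's $k$-nets for all $k\ge 3$) if your purely numerical manipulations sufficed.

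What closes the gap from $5$ to $4$ in \cite{PY08,Yu09} is genuinely transcendental input that your outline does not supply. Pereira--Yuzvinsky analyze the holomorphic foliation on $\CP^2$ defined by the pencil and bound the number of invariant lines via index formulas for its singularities; Yuzvinsky's argument in \cite{Yu09} instead exploits Hodge theory, using that the forms $d\log f_i-d\log f_1$ are holomorphic and hence land in a piece of the Hodge filtration of $H^1(M;\C)$ whose dimension is constrained independently of $\chi$. Your orbifold refinement for the non-reduced case has the same defect, compounded by a further issue: a single hyperplane with $m_H>1$ does not make the entire fiber $F_\alpha$ a multiple fiber (one needs $\gcd\{m_H:H\in\A_\alpha\}>1$), so no orbifold point appears on the base and no ``multiplicity penalty'' enters $\chi^{\mathrm{orb}}$. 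The reduction to $k=3$ in \cite{Yu09} again passes through the Hodge-theoretic machinery, not through the fibration bookkeeping you anticipate.
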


Although infinite families of multinets with $k=3$ are known, 
only one multinet with $k=4$ is known to exist: the $(4,3)$-net 
on the Hessian arrangement.  This leads to the following conjecture 
proposed by Yuzvinsky in \cite{Yu12}.

\begin{conjecture}[\cite{Yu12}]
\label{conj:hessian-net}
The only $(4,d)$-multinet on a hyperplane arrangement is the $(4,3)$-net on the 
Hessian arrangement.
\end{conjecture}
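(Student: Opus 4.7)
The plan is to combine Theorem~\ref{thm:pyy} with the pencil description of Section~\ref{subsec:res-hyparr-deg1} and the Riemann--Hurwitz-type inequality \eqref{eq:RH}, eliminating every $d \ne 3$ in turn.

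First, I would reduce to studying $(4,d)$-nets. Theorem~\ref{thm:pyy} already forces $m_H = 1$ for all $H \in \A$ when $k = 4$, so the multinet is reduced. To promote it to a net, I would combine the criterion of \cite{PS-plms17} (a reduced $k$-multinet with no $2$-flats of multiplicity $4r$, $r > 1$, is a $k$-net) with the summation identities $\sum_{X \in \XX} n_X^2 = d^2$ and $\sum_{X \ni u} n_X = d$, ruling out base-locus flats of order $n_X \ge 2$ through counting. The cases $d = 1$ and $d = 2$ are then dispatched directly: $d = 1$ gives a pencil of concurrent lines (a trivial multinet), while $d = 2$ is combinatorially impossible since each of the four color classes would have to induce a distinct pairing of the $\abs{\XX} = 4$ base points, yet a $4$-element set admits only three such pairings.

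Second, I would exploit the pencil $\bar\psi \colon \CP^2 \dashrightarrow \CP^1$ from \eqref{eq:pencil}, whose four completely reducible fibers $f_1, \dots, f_4$ of degree $d$ satisfy $f_i = a_i f_2 - b_i f_1$ for $i = 3, 4$. Applying the Stothers--Mason polynomial ABC theorem to these relations, using that each $f_i$ is a product of exactly $d$ distinct linear forms, yields an inequality relating $d$ to $\abs{\XX} = d^2$. Substituting this, together with the net identities, into the $k = 4$ case of \eqref{eq:RH} should produce a strong numerical obstruction that, combined with the incidence constraint $\sum_{u \in X} m_u = n_X$ at each base flat, heavily restricts the admissible value of $d$.

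The principal obstacle is the last step, namely actually forcing $d \le 3$. The inequality \eqref{eq:RH} is merely necessary, and polynomial ABC, while exceptionally tight for $d = 3$, does not by itself forbid $d \ge 4$. I would supplement these with one of two additional inputs. One option is a monodromy argument: the monodromy of $\bar\psi$ must permute the four special fibers, so it lands in a finite subgroup of $\operatorname{PGL}(2, \C)$, which in turn constrains the generic fiber (for the Hessian, this is precisely the Hesse elliptic pencil). The other option follows Stipins's Hirzebruch-type inequalities for line arrangements with many high-multiplicity points, which are particularly restrictive when $\abs{\XX} = d^2$ base points are all quadruple. Once $d = 3$ is secured, uniqueness reduces to the classification of rank-$3$ line arrangements in $\CP^2$ with $12$ lines and $9$ quadruple points, which is precisely the Hessian configuration.
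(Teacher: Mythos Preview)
The statement you are attempting to prove is Conjecture~\ref{conj:hessian-net}, which the paper presents as an \emph{open conjecture} due to Yuzvinsky. The paper offers no proof; immediately after stating it, the text only remarks that the conjecture has been verified for $d=4,5,6$. So there is no ``paper's own proof'' to compare against, and your proposal should be read as a strategy toward an open problem rather than as a reconstruction of an existing argument.

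As a strategy, your outline has genuine content but also real gaps, some of which you flag yourself. The reduction to a reduced multinet via Theorem~\ref{thm:pyy} is correct, and the $d=2$ elimination by the pigeonhole on pairings is valid. However, the step promoting a reduced $4$-multinet to a $4$-net is circular as written: the criterion from \cite{PS-plms17} requires the matroid to have no $2$-flats of size $4r$ with $r>1$, but a base-locus flat $X$ in a reduced $4$-multinet has $\abs{X}=4n_X$, so ruling out $\abs{X}\ge 8$ is exactly the same as ruling out $n_X\ge 2$, i.e., exactly the net condition you want. The counting identities $\sum n_X^2=d^2$ and $\sum_{X\ni u} n_X=d$ do not by themselves exclude $n_X\ge 2$. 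More seriously, the core of the conjecture---forcing $d\le 3$---is precisely where your argument becomes programmatic: you correctly note that neither \eqref{eq:RH} nor polynomial ABC suffices, and the proposed monodromy and Hirzebruch-type supplements are gestures rather than arguments. These are the hard, unresolved steps, and nothing in the paper or in the cited literature closes them; that is why the statement remains a conjecture.
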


As mentioned by Yuzvinsky, the conjecture has been verified for $d=4,5,6$. 

\subsection{Concluding remarks}
\label{subsec:concluding}
This survey has sought to illuminate the rich interplay between the combinatorial 
structure of matroids and the topology of their realizations as hyperplane 
arrangements through algebraic invariants like the Orlik--Solomon algebra and 
the holonomy and Chen Lie algebras, and algebro-geometric invariants such 
as resonance varieties and Koszul modules. Insights from multinets offer 
a promising combinatorial lens for both realizable and non-realizable matroids. 
The scarcity of $k$-multinets with $k \geq 4$ in realizable cases, exemplified 
by the unique $(4,3)$-net on the Hessian arrangement, underscores their role 
in shaping resonance varieties. Open questions, such as the linearity of 
higher-degree resonance varieties over $\C$ and the structure of Koszul 
modules, persist. Additionally, the presence of torsion in the holonomy Lie 
algebra $\h(\M)$ of a matroid $\M$ or the associated graded Lie algebra 
$\gr(G)$ of an arrangement group may distinguish realizable 
from non-realizable matroids. These challenges invite continued exploration, 
promising deeper insights into the combinatorial-topological nexus of matroid theory.


\vskip 0,65 true cm

\noindent\textbf{Acknowledgements} \textit{Partially supported by the Simons 
Foundation Collaboration Grant for Mathematicians \#693825 and 
by the project ``Singularities and Applications" - CF 132/31.07.2023 funded by
the European Union - NextGenerationEU - through Romania’s National Recovery 
and Resilience Plan.}

\vskip 0,65 true cm

\medskip

\smallskip\noindent
Received:  26.07.2025\\
Accepted: 27.08.2025

\salt

\adresa{$^{(1)}$ Department of Mathematics, Northeastern University, Boston, MA 02115, USA\\
E-mail: {\tt a.suciu\ap northeastern.edu} }

\end{document}